\newcommand{\Mdef}[2]{\newcommand{#1}{\relax \ifmmode #2 \else $#2$\fi}}
\newcommand{\codim}{\mathrm{codim}}
\newcommand{\im}{\mathrm{im}}
\newcommand{\sm }{\wedge}
\newcommand{\tensor}{\otimes}
\newcommand{\Hom}{\mathrm{Hom}}
\newcommand{\Ext}{\mathrm{Ext}}
\Mdef{\bhom}{\mathbf{\hat{H}om}}
\Mdef{\Mod}{\mathrm{mod}}
\newcommand{\st}{\; | \;}
\newtheorem{thm}{Theorem}[section]
\newtheorem{lemma}[thm]{Lemma}
\newtheorem{prop}[thm]{Proposition}
\newtheorem{cor}[thm]{Corollary}
\theoremstyle{definition}
\newtheorem{defn}[thm]{Definition}
\newtheorem{example}[thm]{Example}
\newtheorem{remark}[thm]{Remark}
\numberwithin{equation}{section}
\newcommand{\qqed}{\qed \\[1ex]}
\renewenvironment{proof}[1][\hspace*{-.8ex}]{\noindent {\bf Proof #1:\;}}{\qqed}
\Mdef{\PH} {\Phi^H}
\Mdef{\PK} {\Phi^K}
\Mdef{\PL} {\Phi^L}
\Mdef{\PT} {\Phi^{\T}}
\Mdef{\ef}{E{\cF}_+}
\Mdef{\etf}{\widetilde{E}{\cF}}
\Mdef{\eg}{E{G}_+}
\Mdef{\etg}{\widetilde{E}{G}}
\newcommand{\etp}{\widetilde{E}\cP}
\newcommand{\piA}{\pi^{\cA}}
\Mdef{\infl}{\mathrm{inf}}
\Mdef{\defl}{\mathrm{def}}
\Mdef{\res}{\mathrm{res}}
\Mdef{\ind}{\mathrm{ind}}
\Mdef{\coind}{\mathrm{coind}}
\Mdef{\univ}{\mathcal{U}}
\Mdef{\Fp}{\mathbb{F}_p}
\Mdef{\Zpinfty}{\Z /p^{\infty}}
\Mdef{\Zpadic}{\Z_p^{\wedge}}
\newcommand{\bi}{\begin{itemize}}
\newcommand{\be}{\begin{enumerate}}
\newcommand{\bc}{\begin{center}}
\newcommand{\bd}{\begin{description}}
\newcommand{\ei}{\end{itemize}}
\newcommand{\ee}{\end{enumerate}}
\newcommand{\ec}{\end{center}}
\newcommand{\ed}{\end{description}}
\newcommand{\adjunction}[4]{
\diagram
#1:#2 \rrto<0.7ex> &&
#3  \llto<0.7ex> :#4 
\enddiagram}
\newcommand{\RLadjunction}[4]{
\diagram
#1:#2 \rrto<-0.7ex> &&
#3  \llto<-0.7ex> :#4 
\enddiagram}
\newcommand{\lra}{\longrightarrow}
\newcommand{\lla}{\longleftarrow}
\newcommand{\iso}{\cong}
\newcommand{\Gspectra}{\mbox{$G$-{\bf spectra}}}
\newcommand{\rings}{\mathbf{Rings}}
\Mdef{\we}{\mathbf{we}}
\Mdef{\fib}{\mathbf{fib}}
\Mdef{\cof}{\mathbf{cof}}
\Mdef{\BI}{\mathcal{BI}}
\newcommand{\Qeq}{\simeq_Q}
\newcommand{\fibre}{\mathrm{fibre}}
\newcommand{\colim}{\mathop{  \mathop{\mathrm {lim}} \limits_\rightarrow} \nolimits}
\newcommand{\holim}{\mathop{ \mathop{\mathrm {holim}} \limits_\leftarrow} \nolimits}
\Mdef{\A}{\mathbb{A}}
\Mdef{\B}{\mathbb{B}}
\Mdef{\C}{\mathbb{C}}
\Mdef{\D}{\mathbb{D}}
\Mdef{\E}{\mathbb{E}}
\Mdef{\T}{\mathbb{T}}
\Mdef{\F}{\mathbb{F}}
\Mdef{\G}{\mathbb{G}}
\Mdef{\I}{\mathbb{I}}
\Mdef{\N}{\mathbb{N}}
\Mdef{\Q}{\mathbb{Q}}
\Mdef{\R}{\mathbb{R}}
\Mdef{\bbS}{\mathbb{S}}
\Mdef{\Z}{\mathbb{Z}}
\Mdef{\bA}{\mathbb{A}}
\Mdef{\bB}{\mathbb{B}}
\Mdef{\bC}{\mathbb{C}}
\Mdef{\bD}{\mathbb{D}}
\Mdef{\bE}{\mathbb{E}}
\Mdef{\bF}{\mathbb{F}}
\Mdef{\bG}{\mathbb{G}}
\Mdef{\bH}{\mathbb{H}}
\Mdef{\bI}{\mathbb{I}}
\Mdef{\bJ}{\mathbb{J}}
\Mdef{\bK}{\mathbb{K}}
\Mdef{\bL}{\mathbb{L}}
\Mdef{\bM}{\mathbb{M}}
\Mdef{\bN}{\mathbb{N}}
\Mdef{\bO}{\mathbb{O}}
\Mdef{\bP}{\mathbb{P}}
\Mdef{\bQ}{\mathbb{Q}}
\Mdef{\bR}{\mathbb{R}}
\Mdef{\bS}{\mathbb{S}}
\Mdef{\bT}{\mathbb{T}}
\Mdef{\bU}{\mathbb{U}}
\Mdef{\bV}{\mathbb{V}}
\Mdef{\bW}{\mathbb{W}}
\Mdef{\bX}{\mathbb{X}}
\Mdef{\bY}{\mathbb{Y}}
\Mdef{\bZ}{\mathbb{Z}}
\newcommand{\cA}{\mathcal{A}}
\Mdef{\cB}{\mathcal{B}}
\Mdef{\cC}{\mathcal{C}}
\Mdef{\mcD}{\mathcal{D}} 
\Mdef{\cE}{\mathcal{E}}
\Mdef{\cF}{\mathcal{F}}
\Mdef{\cG}{\mathcal{G}}
\Mdef{\mcH}{\mathcal{H}} 
\Mdef{\cI}{\mathcal{I}}
\Mdef{\cJ}{\mathcal{J}}
\Mdef{\cK}{\mathcal{K}}
\Mdef{\mcL}{\mathcal{L}}
\Mdef{\cM}{\mathcal{M}}
\Mdef{\cN}{\mathcal{N}}
\Mdef{\cO}{\mathcal{O}}
\Mdef{\cP}{\mathcal{P}}
\Mdef{\cQ}{\mathcal{Q}}
\Mdef{\mcR}{\mathcal{R}}
\Mdef{\cS}{\mathcal{S}}
\Mdef{\cT}{\mathcal{T}}
\Mdef{\cU}{\mathcal{U}}
\Mdef{\cV}{\mathcal{V}}
\Mdef{\cW}{\mathcal{W}}
\Mdef{\cX}{\mathcal{X}}
\Mdef{\cY}{\mathcal{Y}}
\Mdef{\cZ}{\mathcal{Z}}
\Mdef{\At}{\tilde{A}}
\Mdef{\Bt}{\tilde{B}}
\Mdef{\Ct}{\tilde{C}}
\Mdef{\Et}{\tilde{E}}
\Mdef{\Ht}{\tilde{H}}
\Mdef{\Kt}{\tilde{K}}
\Mdef{\Lt}{\tilde{L}}
\Mdef{\Mt}{\tilde{M}}
\Mdef{\Nt}{\tilde{N}}
\Mdef{\Pt}{\tilde{P}}
\Mdef{\tA}{\tilde{A}}
\Mdef{\tB}{\tilde{B}}
\Mdef{\tC}{\tilde{C}}
\Mdef{\tE}{\tilde{E}}
\Mdef{\tH}{\tilde{H}}
\Mdef{\tK}{\tilde{K}}
\Mdef{\tL}{\tilde{L}}
\Mdef{\tM}{\tilde{M}}
\Mdef{\tN}{\tilde{N}}
\Mdef{\tP}{\tilde{P}}
\Mdef{\ft}{\tilde{f}}
\Mdef{\xt}{\tilde{x}}
\Mdef{\yt}{\tilde{y}}
\Mdef{\Ab}{\overline{A}}
\Mdef{\Bb}{\overline{B}}
\Mdef{\Cb}{\overline{C}}
\Mdef{\Db}{\overline{D}}
\Mdef{\Eb}{\overline{E}}
\Mdef{\Fb}{\overline{F}}
\Mdef{\Gb}{\overline{G}}
\Mdef{\Hb}{\overline{H}}
\Mdef{\Ib}{\overline{I}}
\Mdef{\Jb}{\overline{J}}
\Mdef{\Kb}{\overline{K}}
\Mdef{\Lb}{\overline{L}}
\Mdef{\Mb}{\overline{M}}
\Mdef{\Nb}{\overline{N}}
\Mdef{\Ob}{\overline{O}}
\Mdef{\Pb}{\overline{P}}
\Mdef{\Qb}{\overline{Q}}
\Mdef{\Rb}{\overline{R}}
\Mdef{\Sb}{\overline{S}}
\Mdef{\Tb}{\overline{T}}
\Mdef{\Ub}{\overline{U}}
\Mdef{\Vb}{\overline{V}}
\Mdef{\Wb}{\overline{W}}
\Mdef{\Xb}{\overline{X}}
\Mdef{\Yb}{\overline{Y}}
\Mdef{\Zb}{\overline{Z}}
\Mdef{\db}{\overline{d}}
\Mdef{\hb}{\overline{h}}
\Mdef{\qb}{\overline{q}}
\Mdef{\rb}{\overline{r}}
\Mdef{\tb}{\overline{t}}
\Mdef{\ub}{\overline{u}}
\Mdef{\vb}{\overline{v}}
\Mdef{\hc}{\hat{c}}
\Mdef{\he}{\hat{e}}
\Mdef{\hf}{\hat{f}}
\Mdef{\hA}{\hat{A}}
\Mdef{\hH}{\hat{H}}
\Mdef{\hJ}{\hat{J}}
\Mdef{\hM}{\hat{M}}
\Mdef{\hP}{\hat{P}}
\Mdef{\hQ}{\hat{Q}}
\Mdef{\thetab}{\overline{\theta}}
\Mdef{\phib}{\overline{\phi}}
\Mdef{\uA}{\underline{A}}
\Mdef{\uB}{\underline{B}}
\Mdef{\uC}{\underline{C}}
\Mdef{\uD}{\underline{D}}
\Mdef{\bolda}{\mathbf{a}}
\Mdef{\boldb}{\mathbf{b}}
\Mdef{\boldD}{\mathbf{D}}
\Mdef{\fm}{\frak{m}}
\Mdef{\fp}{\frak{p}}
\Mdef{\eps}{\epsilon}
\newcommand{\mccM}{\M}
\newcommand{\Rt}{R_t}
\newcommand{\Rtop}{R_{top}}
\newcommand{\Rttop}{\widetilde{R}_{top}}
\newcommand{\Ra}{R_a}
\renewcommand{\Et}{\cE_t}
\newcommand{\cOtcF}{\tilde{\cO}_{\cF}}
\newcommand{\cOcF}{\cO_{\cF}}
\newcommand{\cOcFG}{\cO_{\cF /G}}
\newcommand{\cOcFH}{\cO_{\cF /H}}
\newcommand{\cOcFK}{\cO_{\cF /K}}
\newcommand{\cOcFL}{\cO_{\cF /L}}
\newcommand{\M}{\bM}
\newcommand{\cell}{\mbox{cell-}}
\newcommand{\modcat}[1]{\mbox{$#1$-$\mathrm{mod}$}}
\newcommand{\torsmodcat}[1]{\mbox{tors-$#1$-$\mathrm{mod}$}}
\newcommand{\qcemodcat}[1]{\mbox{qce-$#1$-mod}}
\newcommand{\pqcemodcat}[1]{\mbox{pqce-$#1$-mod}}
\newcommand{\cellmodcat}[1]{\mbox{cell-$#1$-mod}}
\newcommand{\modcatG}[1]{\mbox{$#1$-mod-$G$-spectra}}
\newcommand{\cellmodcatG}[1]{\mbox{cell-$#1$-mod-$G$-spectra}}
\newcommand{\modcatGK}[1]{\mbox{$#1$-mod-$G/K$-spectra}}
\newcommand{\modcatGG}[1]{\mbox{$#1$-mod-spectra}}
\newcommand{\cellmodcatGG}[1]{\mbox{cell-$#1$-mod-spectra}}
\newcommand{\RtimodGspectra}{\mbox{$\Rti$-mod-$G$-spectra}}
\newcommand{\Rmodspectra}{\mbox{$R$-mod-spectra}}
\newcommand{\AticellAtimodGspectra}{\mbox{$\Ati$-cell-$\Ati$-mod-$G$-spectra}}
\newcommand{\AtimodGspectra}{\mbox{$\Ati$-mod-$G$-spectra}}
\newcommand{\Amodspectra}{\mbox{$A$-mod-spectra}}
\newcommand{\Amod}{\modcatG{A}}
\newcommand{\AKmod}{\modcatGK{A^K}}
\newcommand{\Ramod}{\modcat{\Ra}}
\newcommand{\Rtmod}{\modcat{\Rt}}
\newcommand{\cellRamod}{\cellmodcat{\Ra}}
\newcommand{\mccR}{R}
\newcommand{\cellHBGmodp}{\cellmodcat{H^*(BG)}_p}
\newcommand{\cellHBGmodi}{\cellmodcat{H^*(BG)}_i}
\newcommand{\torsHBGmod}{\torsmodcat{H^*(BG)}}
\newcommand{\HBGmodi}{\modcat{H^*(BG)}_i}
\newcommand{\HBGmodp}{\modcat{H^*(BG)}_p}
\newcommand{\Rttopmod}{\modcatG{\Rttop}}
\newcommand{\cellRttopmod}{\cellmodcatG{\Rttop}}
\newcommand{\DEGmod}{\modcatG{\DH EG_+}}
\newcommand{\DBGmod}{\modcatGG{\DH BG_+}}
\newcommand{\Rtopmod}{\modcat{\Rtop}}
\newcommand{\cellRtopmod}{\cellmodcatGG{\Rtop}}
\newcommand{\SpOG}{G\mbox{Sp}^O}
\newcommand{\SpLOG}{G\mbox{Sp}_{\mcL}^O}
\newcommand{\SpLOGK}{G/K\mbox{Sp}_{\mcL}^O}
\newcommand{\SpLOone}{\mbox{Sp}_{\mcL}^O}
\newcommand{\SpSigma}{\mbox{Sp}^{\Sigma}}
\newcommand{\efp}{E\cF_+}
\newcommand{\efhp}{E\cF/H_+}
\newcommand{\efkp}{E\cF/K_+}
\newcommand{\eflp}{E\cF/L_+}
\newcommand{\siftyV}[1]{S^{\infty V(#1)}}
\newcommand{\lr}[1]{\langle #1\rangle}
\newcommand{\sifty}[1]{S^{\infty V(#1)}}
\newcommand{\connsubG}{\mathbf{ConnSub(G)}}
\newcommand{\Kos}{\mathrm{Kos}}
\newcommand{\cEi}{\cE^{-1}}
\renewcommand{\DH}{D}
\newcommand{\cBI}{\mathcal{BI}}
\newcommand{\ccI}{\bfD}
\newcommand{\bbarmccR}{\overline{\mccR}}
\newcommand{\bbU}{\mathbb U}
\newcommand{\FF}{\mathbb F}
\newcommand{\Ati}{\tilde{A}}
\newcommand{\Rti}{\tilde{R}}
\newcommand{\bfD}{\mathbf{D}}
\newcommand{\GI}{\mathcal{GI}}
\newcommand{\alphat}{\tilde{\alpha}}
\newcommand{\betat}{\tilde{\beta}}
\newcommand{\Ho}{\mathrm{Ho}}
\newcommand{\fbbS}[1]{\bbS_{\geq #1}}
\newcommand{\Cf}{C_f}
\newcommand{\Ci}{C_i}
\newcommand{\Cif}{C_{if}}
\newcommand{\PCf}{PC_f}
\newcommand{\PCi}{PC_i}
\newcommand{\PCif}{PC_{if}}
\newcommand{\lnz}{lnz}
\newcommand{\Rtilde}{\tilde{R}}
\newcommand{\DGAs}{\mathrm{DGAs}}
\newcommand{\ist}{i_{\sigma}^{\tau}}
\newcommand{\Rhat}{\hat{R}}
\newcommand{\cAs}{\cA^s}
\newcommand{\smL}{\sm_{\mcL}}
\newcommand{\RRc}{\mathbb{R}_c}
\newcommand{\RRd}{\mathbb{R}_d}
\newcommand{\einfl}{\widetilde{\mathrm{inf}}}
\newcommand{\smb}{\overline{\sm}}
\begin{document}
\title{ An algebraic model for rational torus-equivariant spectra} 
\author{J.~P.~C.~Greenlees}
\address{Department of Pure Mathematics, The Hicks Building, 
Sheffield S3 7RH. UK.}
\email{j.greenlees@sheffield.ac.uk}

\author{B.~Shipley}
\thanks{The authors are grateful to D.~Barnes, A.~Blumberg, M.~Hill,
  M.~Mandell, J.P.~May and M.~Kedziorek for numerous conversations
  about this work, to the Isaac Newton Institute (Cambridge) during
  the early stages of the work, and to MSRI (Berkeley) during recent
  revisions, for providing excellent working environments. We are also
  grateful to the referee for an attentive and thoughtful reading. 
The first author is grateful for support under EPSRC grant  number EP/H040692/1. 
The second author was supported in part by 
NSF Grants DMS-0417206, DMS-0706877, DMS-1104396, and DMS-1406468 
and by a Sloan Research Fellowship.}
\address{Department of Mathematics, Statistics and Computer Science, University of Illinois at
Chicago, 508 SEO m/c 249,
851 S. Morgan Street,
Chicago, IL, 60607-7045, USA}
\email{bshipley@math.uic.edu}

\date{}
\maketitle

\begin{abstract}
We provide a universal de Rham model for rational $G$-equivariant cohomology
theories for an arbitrary torus $G$. More precisely, we show that the 
representing category, of rational $G$-spectra,   is Quillen
equivalent to the explicit small and calculable algebraic model $d\cA
(G)$ of differential graded objects in the category $\cA (G)$
introduced in \cite{tnq1}. 
\end{abstract}

\tableofcontents
\part{Introduction}
\section{Overview}
\subsection{Preamble}
Cohomology theories are contravariant homotopy functors  on topological
spaces satisfying the Eilenberg-Steenrod axioms (except for the
dimension axiom), and any cohomology theory $E^*(\cdot )$ is represented by a
homotopy theoretic spectrum $E$ in the sense that $E^*(X)=[X,E]^*$. 
Accordingly, the category of spectra gives an
embodiment of the category of cohomology theories in which one can do homotopy theory. 
The complexity of the homotopy theory of spectra is visible even in 
the homotopy endomorphisms of the unit object: this is the ring of 
stable homotopy groups of spheres, which is  so intricate that we  cannot expect 
a complete analysis of the category of spectra in general.  However, most of the complication comes from
$\Z$-torsion so we can  simplify things by rationalizing. The
resulting category of rational spectra represents cohomology theories
with values in rational vector spaces. The simplicity of this
rationalized category is apparent by Serre's theorem: the rationalization of the stable 
homotopy groups of spheres simply consists of $\Q$ in degree 0, and 
it is a small step to see that there is nothing more to the topology
of rational
cohomology  theories than their graded rational vector space of
coefficients.  On the other hand, de Rham cohomology shows that a large amount of 
useful geometry remains even when we rationalize. Accordingly, 
the study of rational cohomology
 theories and rational spectra is both accessible and useful. 

These facts are  well-known, and it is natural to ask what happens 
when we consider spaces with an action of a compact Lie group $G$. 
Once again, a $G$-equivariant cohomology theory is a contravariant 
homotopy functor on $G$-spaces satisfying suitable conditions, and 
each such $G$-equivariant cohomology theory is represented by a
$G$-spectrum \cite{LMS(M)}. In the equivariant case, when we rationalize a 
$G$-spectrum, considerably more structure remains than in the
non-equivariant case.  It is natural to expect rational 
representation theory to play a role in understanding rational
equivariant cohomology theories, and when $G$ is finite this is the 
only ingredient. However in general, the other significant piece of 
structure is exemplified by the Localization Theorem: for a torus $G$
this  states that (for finite complexes) there is no difference between
the Borel cohomology of a $G$-space and its
$G$-fixed points once the Euler classes are inverted. These ingredients
can be used to build the algebraic model \cite{tnq1} for rational $G$-spectra
described in Section \ref{sec:standard} below. 

The archetype for giving an algebraic model for the homotopy theory of
topological origin is Quillen's analysis of simply connected rational 
spaces \cite{Quillen}. To prove the result, he introduced the axiomatic
framework of model categories which underly the homotopy category, 
and the notion of a Quillen equivalence between model categories
preserving the homotopy theories. The use of these ideas is now 
widespread, and we refer to \cite{hovey-model} and \cite{hh} for
details.   

Our main result is a Quillen equivalence between the category of
rational $G$-spectra for a torus $G$ and an explicit and calculable algebraic
model. In the course of our proof, we introduce a number of techniques of
broader interest, in equivariant homotopy theory and in the theory of model
categories. In the rest of the introduction, we give a little history,
and then describe our results, methods and conventions.

\subsection{Equivariant cohomology theories}
Non-equivariantly, rational stable homotopy theory is very simple: 
the homotopy category of rational spectra is equivalent to 
the category of graded rational vector spaces, and all cohomology 
theories are ordinary in the sense that they are naturally equivalent
to ordinary cohomology with coefficients in a graded vector space.
The first author has conjectured \cite{gqsurvey} that for each compact Lie group $G$, there
is an abelian category $\cA (G)$, so that the homotopy category of 
rational $G$-spectra is equivalent to its derived category $D(\cA
(G))$, i.e., to the  homotopy category of the category $d\cA(G)$
consisting of differential graded objects of $\cA (G)$:
$$\mathrm{Ho}(\mbox{$G$-spectra}/\Q ) \simeq \mathrm{Ho}(d\cA
(G))=D(\cA (G)).$$
In general terms, the objects of $\cA (G)$ are sheaves of graded
modules with additional structure over the space
of closed subgroups of $G$, with the fibre over $H$ giving information 
about the geometric $H$-fixed points. The conjecture describes various
properties of $\cA (G)$, and in particular
asserts that its injective dimension is equal to the rank of $G$. 
According to the conjecture one may therefore expect  to  make complete calculations 
in rational equivariant stable homotopy theory, and to classify cohomology 
theories. Indeed, one can construct a cohomology theory
by writing down a differential graded object in $\cA (G)$: this is how $SO(2)$-equivariant 
elliptic cohomology was constructed in \cite{ellT}, and it is hoped 
to construct cohomology theories associated to generic curves of higher genus
in a similar way using the results of this paper. 

The conjecture is elementary for finite groups, where 
$\cA (G)=\prod_{(H)}\mbox{$\Q W_G(H)$-mod}$ \cite[Theorems A.7, A.8, A.9]{Tate}, where
the product is over conjugacy classes of subgroups $H$ and  
$W_G(H)=N_G(H)/H$. This means that  any cohomology theory is again
ordinary in the sense that it is a sum over conjugacy classes $(H)$ of
ordinary cohomology of the $H$-fixed points with coefficients in a
graded $\Q W_G(H)$-module. The conjecture has been 
proved for the rank 1 groups $G=SO(2), O(2), SO(3)$ in \cite{s1q, o2q, so3q}, 
where $\cA (G)$ is more complicated. 
It is natural to go on to conjecture that the equivalence comes from a 
Quillen equivalence
$$\mbox{$G$-spectra}/\Q  \simeq d\cA (G), $$
for suitable model structures. 
{The second author proved that for $G=SO(2)$ the Quillen equivalence
would follow from  a triangulated equivalence on the derived
categories \cite{qs1q}. It was claimed in \cite{s1q} that the
equivalence of homotopy categories was in fact a triangulated
equivalence, but the proof is incomplete, and subsequent work of
Patchkoria \cite{Patchkoria} shows that the method of \cite{s1q} is
insufficient. In any case, there is no  prospect of extending the methods of 
\cite{s1q} or \cite{qs1q} to higher rank. Even if one only wants an equivalence of triangulated categories, 
it appears essential to establish the Quillen equivalence when $r \geq 2$.
Building on the present work,  
Barnes \cite{Barnes1, Barnes2} has shown how to deduce the Quillen
equivalence for $G=O(2)$
from a suitable proof for $G=SO(2)$ (such as the one we use here), and
Kedziorek \cite{Kedziorek} has done so for $G=SO(3)$. 

Recently, Barnes, Kedziorek and the present authors have given a
separate account of a Quillen equivalence for the case $G=SO(2)$
\cite{s1qmon}. This has the merit of avoiding the massive complication
 due to the complexity of the space of connected subgroups for a general torus,
 and also  gives a stronger conclusion than the specialization of our result here,
since the equivalence is monoidal.  }

\subsection{The classification theorem}
The present paper completes the programme begun in \cite{tnq1,tnq2}
and supported by \cite{cellprin,modulefps,diagrammodcats,AGs}. 
The purpose of the series is to provide a small and calculable algebraic model for rational 
$G$-equivariant cohomology theories for  a torus $G$ 
of rank $r \geq 0$. Such cohomology theories are represented by rational 
$G$-spectra, and in this paper we show that the category of 
rational $G$-spectra is Quillen equivalent to the small and concrete abelian 
category $\cA (G)$ introduced in \cite{tnq1}
(its definition and properties are summarized  in Section \ref{sec:standard}).
 The category 
$\cA (G)$ is designed as a natural 
target of a homology theory
$$\piA_*: \Gspectra \lra \cA (G); $$
the idea is that  $\cA (G)$ is a category of sheaves of modules, with 
the stalk over a closed subgroup $H$ being the Borel cohomology 
of the geometric $H$-fixed point set with suitable coefficients. 
A main theorem of  \cite{tnq1} shows that $\cA (G)$ is
of finite injective dimension (shown in \cite{tnq2} to be $r$).

The main  theorem of the present paper and the culmination of the 
series is as follows. Model structures will be described 
in Sections \ref{sec:modelsofspectra} and \ref{sec:algebraicmodels}  below. 

\begin{thm}
\label{thm:culmination}
For any torus $G$, there is a Quillen equivalence
$$\Gspectra/\Q \Qeq d\cA (G) $$
of model categories. In particular their homotopy categories are 
equivalent
$$Ho(\Gspectra/ \Q) \simeq Ho(d\cA (G) ) =D(\cA(G))$$
as triangulated categories. 

\end{thm}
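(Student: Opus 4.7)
The plan is to produce the Quillen equivalence as a zigzag, since no direct comparison functor between $\Gspectra/\Q$ and $DG$-$\cA (G)$ is available. Each step in the zigzag should either rigidify the homotopy theory, move between topological and algebraic settings, or simplify the underlying ringoid, and each intermediate category needs its own model structure (these are what Sections \ref{sec:modelsofspectra} and \ref{sec:algebraicmodels} are promised to provide).

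First I would produce a small set of compact generators for rational $G$-spectra, built from suspension spectra of orbits $G/H_+$ for closed subgroups $H$, smashed with the idempotents that separate out the different conjugacy strata of subgroups (these are the classical fixed-point separation pieces such as $\efp$ and $\etf$). By Schwede--Shipley Morita theory, $\Gspectra/\Q$ is Quillen equivalent to the category of modules over the endomorphism ringoid $\Etop$ of this generating set. Since we are working rationally, $\Etop$ is naturally an $H\Q$-algebra ringoid, so applying Shipley's theorem relating $H\Q$-algebras to DGAs passes us across the topological/algebraic divide to a DG-ringoid $\Et$ whose module category is Quillen equivalent to $\Gspectra/\Q$.

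The remaining task is to bridge $\modcat{\Et}$ with $DG$-$\cA(G)$. I would do this by constructing a further Quillen equivalence between $\modcat{\Et}$ and modules over an "algebraic" ringoid $\Ea$, obtained by replacing $\Et$ with its (intrinsic formality) quasi-isomorphic simplification; this uses that the relevant Borel cohomology rings $H^*(BG/H)$ and the connecting Euler class structure are formal. The homology of $\Ea$ at each closed subgroup then visibly assembles into the sheaf-of-modules data defining $\cA(G)$, and the finite injective dimension result of \cite{tnq1, tnq2} underwrites the identification of $\modcat{\Ea}$ with the derived category of $\cA(G)$. One must then lift this derived equivalence to a Quillen equivalence with $DG$-$\cA(G)$, using an injective-model structure on the target.

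The main obstacle is the middle passage, where the DG-ringoid $\Et$ must be rigidified into something concrete enough to be recognized. In rank one this is already non-trivial and was handled by ad hoc formality arguments, but for higher rank the lattice of closed subgroups of $\T^r$ is genuinely two-dimensional in structure: one must manage both the continuous variation over the connected subgroups (the sheaf-theoretic direction) and the Euler class inversions that encode the Localization Theorem, and one must show these two pieces assemble coherently at the DG level and not merely after passing to homotopy categories. This is precisely why the rank-one techniques do not extend, and why an explicit, layered zigzag through the intermediate ringoids $\Rtop, \Rttop, \Rt, \Ra$ is needed rather than a single equivalence.
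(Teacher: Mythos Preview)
Your proposal sketches a plausible zigzag but follows a genuinely different route from the paper, and the step you flag as the ``main obstacle'' is in fact a real gap that your outline does not close.

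\textbf{The difference in route.} You propose to pass immediately to an endomorphism ringoid of compact generators via Schwede--Shipley Morita theory, then apply Shipley's $H\Q$-algebra theorem, then rigidify. The paper does \emph{not} use the endomorphism ringoid. Instead it stays in the equivariant world as long as possible: it first models $G$-spectra as (cellularized) modules over an explicit diagram $\Rttop$ of \emph{commutative} ring $G$-spectra, indexed on a poset $\LI(G)$ built from connected subgroups, with values like $D\efp \sm \siftyV{K}$. Only then does it pass to $G$-fixed points (giving a diagram $\Rtop$ of commutative nonequivariant ring spectra), then to commutative DGAs $\Rt$ via Shipley, then to the graded rings $\Ra=H_*(\Rt)$ by formality, and finally identifies qce-$\Ra$-modules with $\cA(G)$. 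The Cellularization Principle is invoked at every step to keep track of which modules are geometrically meaningful.

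\textbf{The gap.} Your rigidity step asserts that $\Et$ can be replaced by a formal model $\Ea$ because ``the relevant Borel cohomology rings and the connecting Euler class structure are formal.'' But intrinsic formality of the individual rings $H^*(BG/H)$ does not give formality of the endomorphism \emph{ringoid}: you would need to rigidify all the morphism DG-modules $\Et(i,j)$ and their compositions simultaneously, and there is no off-the-shelf result doing this for a ringoid with infinitely many objects and noncommutative morphism algebras. The paper's entire architecture is designed to sidestep this: by working with a diagram of commutative rings (one ring per vertex, maps between them) rather than a ringoid, formality reduces to the elementary fact that a commutative DGA with polynomial cohomology is intrinsically formal, applied vertex by vertex and then patched via the explicit localization and inflation maps. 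The authors remark that a strategy closer to yours was their original approach but was abandoned because ``adapting the method to deal with many isotropy groups makes the formal framework very complicated.'' Your proposal names the right obstacle but does not provide a mechanism to overcome it.
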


\begin{remark}
The functors involved in these Quillen equivalences are monoidal,
but their interaction with the model structures is not straightforward.
For this reason,  the extension of this result to Quillen equivalences
on the associated categories of monoids will be discussed  elsewhere
(as done in \cite{s1qmon} in the rank 1 case). 

\end{remark}

Because of the nature of Theorem \ref{thm:culmination},  it is easy to impose restrictions 
on the isotropy groups occurring in topology and algebra,  
and one may deduce versions of this theorem for categories 
of spectra with restricted isotropy groups. For example we recover
a special case of the result of \cite{gfreeq}, which states that if $G$ is any connected compact Lie
group there is a Quillen equivalence
$$ \mbox{free-$\Gspectra/\Q$}\Qeq \mbox{DG-torsion-$H^*(BG)$-modules},  $$
with a quite different proof. The methods of the present paper are
used to  extend the result on free $G$-spectra  to disconnected groups
$G$ in \cite{gfreeq2}. 

\subsection{Applications}
Beyond the obvious structural insight, the type of applications we 
anticipate may be seen from those already given for the circle group $\T$ 
(i.e., the case $r=1$). For example 
\cite{s1q} gives a classification of 
rational $\T$-equivariant cohomology theories, a precise formulation 
and proof of the rational $\T$-equivariant Segal conjecture, and an algebraic
analysis of existing theories,  such as $K$-theory. More 
significant is the construction in \cite{ellT} of a rational equivariant
cohomology theory associated to an elliptic curve $C$ over a $\Q$-algebra, 
and the identification of a part of $\T$-equivariant stable homotopy theory 
modelled on the derived category of sheaves over $C$. The philosophy
in which equivariant cohomology theories correspond to algebraic groups
is expounded in \cite{GEGA}, and there are encouraging signs suggesting
that one may use the model described in the present paper to construct  
torus-equivariant cohomology theories associated to  generic complex
curves of higher genus.

\subsection{Outline of strategy}
The general strategy makes use of the existence of a good symmetric 
monoidal model category of spectra, allowing 
us to talk about commutative ring spectra and modules over
them. However, it is convenient that the commutative monoids in our category 
should include spectra that are  only $E_{\infty}$-rings 
 (i.e., algebras over a non-equivariant $E_{\infty}$-operad, which is
 to say  $E_{\infty}'$  in the sense of McClure \cite{McClure}). 
Accordingly we work in the Blumberg-Hill category of
$\mcL$-spectra in orthogonal spectra. The particular properties we need
are summarized in Proposition \ref{prop:axiom}. 
The rest of this introduction will outline the strategy without
mentioning detailed requirements of the model. 

There are two ingredients at the core of our argument, that led us to 
believe we could establish a Quillen equivalence. The first author's
\cite{tnq1}  construction of the algebraic
category $\cA (G)$ giving the basis of an effcient algebraic 
Adams spectral sequence suggested differential objects in $\cA (G)$ as
an algebraic model. However it is the second author's results
\cite{s-alg}  (giving Quillen equivalences between commutative
algebras over the Eilenberg-Mac~Lane spectrum $H\Q$ and differential 
graded commutative $\Q$-algebras, and between the  module categories
of the corresponding algebras) that gives the bridge  allowing us to pass from topology to
algebra. 

In outline, what we have to achieve is to move from the category of 
rational $G$-spectra to the category of DG objects of the abelian category $\cA (G)$.
There are five main stages to this, which we first describe and then
illustrate on a chain of Quillen equivalences.
\begin{description}
\item[(1) Isotropy separation (Sections \ref{sec:ringsandmodules} to \ref{sec:formalcube})] 
Rational $G$-spectra are modules over the rational sphere spectrum
$\bbS$. The rational sphere spectrum is the homotopy pullback of a
diagram $\Rttop$ of 
isotropically simpler commutative ring $G$-spectra. Accordingly, by 
the methods of \cite{diagrammodcats}, the category of
$\bbS$-modules is equivalent to a category of diagrams of  modules  over the
pullback diagram of ring $G$-spectra. 

The diagram $\Rttop$ has the  shape of  a punctured $(r+1)$-cube, which we call the `formal' punctured
cube $\PCf$. The module category of each individual ring spectrum captures isotropical information about subgroups with a specified
dimension and the diagram shows how to reassemble this isotropically local 
information into a global spectrum.  

\item[(2) Removal of equivariance (Section \ref{sec:removeequiv})] 
At each point in the diagram, we replace the
 commutative ring $G$-spectrum by a commutative non-equivariant ring spectrum by passage to $G$-fixed
points, and show that the module categories are equivalent  using the general methods described in \cite{modulefps}. 
\item[(3) Transition to algebra  (Section \ref{sec:spectratoDGAs})] 
At each point in the diagram, we
apply the machinery of \cite{s-alg} to replace all the commutative 
ring spectra in the diagram by commutative DGAs, and the category
of module spectra by the corresponding category of DG-modules over
the DGAs. 
\item[(4) Rigidity (Section \ref{sec:formality})] 
The diagram of commutative DGAs is intrinsically formal in the sense
that it is determined up to equivalence by its homology. Accordingly
the diagram of commutative DGAs may be replaced by a diagram of 
commutative algebras. 
\item[(5) Simplification (Sections \ref{sec:algebraicmodels}  
and \ref{sec:algcells})] 
  At each stage so far, we have used cellularization to pick out the
  relevant homotopy category as  the localizing subcategory built
from certain specified `cells'. The final step is to
replace this cellularization of the category of DG-modules over the diagram 
of commutative rings by a much smaller category of modules with 
special properties, so that no cellularization is necessary; using
apparatus from \cite{AGs}, this category turns out to be  $\cA (G)$. 
\end{description}

These steps correspond to the following
sequence of Quillen equivalences, several of which are themselves
zig-zags of simple Quillen equivalences. The cellularizations are all
with respect to the set of images of the cells $G/H_+$ as $H$ runs
through closed subgroups, and the diagrams of rings
are all punctured $(r+1)$-cubes. 
\begin{multline*}
\Gspectra 
\stackrel{(1)} \simeq \cellmodcatG{\Rttop}
\stackrel{(2)} \simeq \cellmodcatGG{\Rtop}
\stackrel{(3)} \simeq \cellmodcat{\Rt}\\
\stackrel{(4)} \simeq \cellmodcat{\Ra}
\stackrel{(5)} \simeq \pqcemodcat{\Ra} 
\stackrel{(5)}\simeq \cA(G)
\end{multline*}

It is worth highlighting some of the techniques of more general applicability. 

First, we constantly use the Cellularization Principle \cite{cellprin}. The idea is
that a Quillen adjunction induces a Quillen equivalence between
cellularized model categories, provided we cellularize with respect to
cells that are small and correspond under the adjunction. The
hypotheses are mild, and  it may appear like a tautology, but it has been useful innumerable times in the 
present paper and deserves emphasis. It can be directly compared to another extremely
powerful formality: a natural transformation of cohomology
theories that is an isomorphism on spheres is an equivalence. 

Second, we make extensive use of categories of modules over diagrams
of rings \cite{diagrammodcats}, and prove that up to Quillen equivalence and
cellularization, we can replace a category of modules over a  diagram
of rings by the category of modules over its pullback.

Third, the fact that if $A$ is a ring $G$-spectrum,  passage to categorical
$K$-fixed points establishes a
close relationship between the category of $A$-module $G$-spectra and 
the  category of $A^K$-module $G/K$-spectra \cite{modulefps}. More precisely, we consider 
a Quillen adjunction
$$
\adjunction{A\tensor_{A^K}(\cdot)}
{\AKmod}{\Amod}{(\cdot )^K}.  $$
This is especially effective in conjunction with  the Cellularization Principle.
 
Finally, we note that at the centre of the proof is rigidity: any 
two model categories with suitable specified homotopy level properties 
are equivalent. The equivariant sphere ring spectrum should be
viewed as the sheaf of functions on a  non-affine variety; we find a
cover by affine varieties which are individually rigid, and the
configuration of the cover is also rigid. 

In effect, we have used only one basic rigidity result: any two commutative DGAs
which have the same polynomial cohomology are quasi-isomorphic. This elementary 
result has far reaching consequences.  Our main use of it here is to 
patch together local rigidity results (each based on polynomial rings)
to give a global rigidity result. In \cite{gfreeq} we applied it to 
prove rigidity of Koszul duals. We also need a rigidity result for
modules, that by an Adams spectral sequence argument, the standard 
cells are determined by their homology \cite[12.1]{tnq1}.

\subsection{Relationship to other results}

We should explain the relationship between the strategy implemented
here and that used for free spectra in  \cite{gfreeq}. Both strategies
start with a category of $G$-spectra and end with a purely algebraic
category, and the connection in both relies on finding an intermediate
category which is visibly rigid in the sense that it is determined
by its homotopy category (the archetype of this is the category 
of modules over a commutative DGA with polynomial cohomology). 

The difference comes in the route taken. Roughly speaking,
the strategy in \cite{gfreeq} is to move to non-equivariant spectra as
soon as possible, whereas that adopted here is to keep working in 
the ambient category of $G$-spectra for as long as possible. 

The advantage of the strategy of \cite{gfreeq} is that it is close
to commutative algebra, and should be adaptable to proving uniqueness
of other algebraic categories. However, 
it is hard to retain control of the monoidal structure, and 
adapting the method to deal
 with many isotropy groups makes the formal framework very complicated. 
This was our original approach to the result for tori. 

The present method appears to have several advantages. It uses fewer
steps, and the monoidal structures are visible throughout. Furthermore, 
it reflects traditional approaches to the homotopy theory of $G$-spaces
in that it  displays the category of $G$-spectra as built from categories
of spectra with restricted isotropy group using Borel cohomology.

{Finally, we should explain that early versions of the present paper
(specifically arXiv:1101.2511 v1, v2, v3 posted in 2011) differed from
the present one in two important respects. Firstly, they included in
condensed form the parts of  \cite{cellprin, modulefps,
  diagrammodcats} that they required; we separated out those papers
partly to improve readability and partly because they appeared to be
of wider interest. During the process of revising this paper to take
advantage of the separation, we found a signficant simplification, and
this led to the  second main difference.  The method for dealing with the
equivalence between the category of $G$-spectra and a category of
diagrams is much simpler here than in the earlier versions
 because the diagrams themselves are finite. In the present version, the
 manipulations with diagrams are now largely replaced by an equivalence of
$G$-spectra showing how the sphere spectrum $\bbS$ can be constructed from isotropically 
simpler pieces. Having made that change, it was necessary to refer to 
the paper \cite{AGs} for the behaviour of an algebraic torsion
functor. 

\subsection{Conventions}
Certain conventions are in force throughout the paper. The most important is that {\em everything is rational}: 
henceforth all  spectra  and homology theories are rationalized without
comment.  For example, the category of rational $G$-spectra will now 
be denoted `$\Gspectra$'.  
Whenever possible we work in the derived category; for example, most
equivalences are verified at this level.
We also use the standard conventions that
`DG' abbreviates `differential graded' and that `subgroup' means 
`closed subgroup'. We attempt to let inclusion of subgroups follow
the alphabet, so that $G \supseteq H \supseteq K \supseteq L$. 

We often have to discuss classifying spaces of quotient groups, such
as $G/K$. We omit brackets and write  $BG/K=B(G/K)$. This should cause
no confusion because the only natural action of $K$ on  $BG$  is the
trivial  action (so we never have cause to make the construction $(BG)/K$).  

We focus on homological (lower) degrees, with differentials reducing degrees;
for clarity, cohomological (upper) degrees are called {\em codegrees} and 
may be converted to degrees  by negation in the usual way.
Finally, we write $H^*(X)$ for the unreduced cohomology of a space $X$
with rational coefficients.

We have adopted a number of more specific
conventions in our choice of notation, 
and it may help the reader to be alerted to them.

\begin{itemize}
\item There are several cases where we need to talk about 
ring $G$-spectra $\widetilde{R}$ and their fixed points
$R=(\widetilde{R})^G$. The equivariant form is indicated by 
a tilde on the non-equivariant one. 

\item We need to discuss rings in various categories of spectra, 
and then modules over them. Since it often needs to be made explicit, 
we write, for example, $R$-module-$G$-spectra for the category of 
$R$-modules in the category of $G$-spectra. 


\item The purpose of this paper is to give an algebraic model of a 
topological phenomenon. Accordingly, characters arise in various 
worlds, and it is useful to know they play corresponding roles. We
sometimes point this out by use of subscripts. For example
$R_a$ (with `$a$' for `algebra') might be a (conventional, graded) ring, 
$R_{top}$ its counterpart in spectra, $\tilde{R}_{top}$ its counterpart 
in $G$-spectra, and $R_t$ its counterpart in $DG$-algebra (a large
DGA, that is only described indirectly). 
  
\item We often have to discuss diagrams of rings and diagrams of modules 
over them, but we will usually say `$R$ is a diagram of rings' and
`$M$ is an $R$-module' (leaving the fact that $M$ is also a diagram to 
be deduced from the context). 

\end{itemize}

\subsection{Organization of the paper}\label{1.I}
Section \ref{sec:standard}  recalls the definition of the algebraic
model $\cA (G)$. Section \ref{sec:modelsofspectra}   
discusses the properties we need of our monoidal model of equivariant spectra,
and introduces the  Blumberg-Hill model we use ($\mcL$-spectra in
orthogonal spectra): beyond the good properties of orthogonal spectra
this has the property that $E_{\infty}$-ring $G$-spectra
are the commutative monoids. 

Section \ref{sec:ringsandmodules}  introduces 
the formalism for discussing modules over diagrams of rings.

In Section \ref{sec:isotropiccube} we explain that the sphere spectrum
is the homotopy pullback of a punctured $(r+1)$-cube of isotropically 
simpler ring spectra, and in  Section \ref{sec:formalcube} we explain that 
it is the homotopy pullback of a closely related punctured $(r+1)$-cube diagram $\Rttop$
of ring spectra  which are formal in the sense that they are determined by their
homotopy. This punctured cube is $\PCf$, and all the subsequent
diagrams have this shape. The results of \cite{diagrammodcats} then establish Equivalence (1), showing
that the category of rational $G$-spectra is equivalent to a category
of module $G$-spectra over the diagram  $\Rttop$ of ring $G$-spectra. 
This completes the isotropy separation step of the proof. 

Until this point, all arguments and calculations are within the
category of $G$-spectra. The remaining steps change ambient
categories. We not only need to recognize the categories of modules, 
but we also need to recognize the cells we use to cellularize them.
The fact that the natural cells $G/H_+$ are characterized by their 
homology (\cite[12.1]{tnq1}) means that we do not need to comment
further on the cells. 

Having shown the category of $G$-spectra is equivalent to a category of
modules over the diagram $\Rttop$ of ring $G$-spectra, we can move from 
$G$-spectra to non-equivariant spectra in Section
\ref{sec:removeequiv}, using the results of \cite{modulefps} to establish that this category is equivalent
to a category of modules over the diagram $\Rtop =(\Rttop)^G$ of ring
spectra (i.e., Equivalence (2)). In Section \ref{sec:spectratoDGAs} we
use the results of \cite{s-alg}  to establish that the category of
$\Rtop$-modules is equivalent to a category of modules over the
diagram $\Rt$ of DGAs (i.e.,  Equivalence (3)). It is then quite straightforward to establish Equivalence (4), showing
in Section \ref{sec:formality}
that the $\PCf$-diagram $\Ra=H_*(\Rt)$ is intrinsically
formal, so that the category of modules over $\Rt$ and $\Ra$
are equivalent. 

 In Section \ref{sec:AGasmodules}  we recognize our progress by
seeing  that  $\cA (G)$  can be viewed as a category of modules over
the diagram $\Ra$ of graded rings. Finally Sections \ref{sec:algebraicmodels} 
and \ref{sec:algcells} establish Equivalence (5), 
showing that the cellularization is equivalent to the  particular category
$\cA (G)$ of DG-$\Ra$-modules.

\section{The algebraic model}
\label{sec:standard}
In this section we recall relevant results from \cite{tnq1} which 
constructs an abelian category $\cA (G)$ giving an algebraic
reflection of the structure of the category 
of $G$-spectra and an Adams spectral sequence based on it; the present
account is very compressed and readers may need to refer to \cite{tnq1} for
details. The structures from that analysis will be relevant to much of what
we do here. 

This model is based on pairs of connected subgroups
and is denoted $\cA_c^p(G)$ in the more precise notation of
\cite{AGs}, and we use this form of the model since it is the most convenient and
practical model for calculations. In fact the first  output of the
topological argument is a  model based on flags of dimensions
of  subgroups  which is denoted $\cA_d^f(G)$ in \cite{AGs}. This was introduced and shown to be equivalent to
$\cA_c^p(G)$ in   \cite{AGs}; building on \cite{AGs}, we show in Section 
\ref{sec:AGasmodules} how to move directly from the algebraic model
coming from our proof (namely $\cA_d^f(G)$)  to $\cA_c^p(G)$.

\subsection{Definition of the category}
\label{subsec:defnAG}
First we must construct the category $\cA (G)$, which is a  category of modules
over a diagram of rings.  For a category $\bfD$ and a diagram $R
:\bfD \lra \rings$ of rings,   an $R$-module is given by a $\bfD$-diagram $M$ such that $M(x)$ is an
$R(x)$-module for each object $x$ in $\bfD$, 
and for every morphism $a: x\lra y$ in $\bfD$, the map $M(a): M(x) \lra M(y)$ 
is a module map over the ring map $R(a): R(x) \lra R(y)$.

The shape of the diagram for $\cA (G)$ is given by the partially ordered set $\connsubG$ 
of connected subgroups of $G$.  To start with we consider the single
graded ring
$$\cOcF =\prod_{F\in \cF }H^*(BG/F), $$
where the product is over the family $\cF$ of finite subgroups of $G$. 
To specify the value of the ring at a connected subgroup $K$,  
we use Euler classes: indeed if $V$ is a complex representation of $G$ with
$V^G=0$, we may define $c(V) \in \cO_{\cF}$ by specifying its components. 
In the factor corresponding to the finite subgroup $F$ we take
$c(V)(F):=c_{|V^F|}(V^F) \in H^{|V^F|}(BG/F)$ where $c_{|V^F|}(V^F)$ is the  classical Euler class
of $V^F$ in ordinary rational cohomology.

The diagram of rings $\cOtcF$ is defined by the following functor on $\connsubG$ 
$$\cOtcF (K)=\cEi_K \cOcF$$
where $\cE_K =\{ c(V) \st V^K=0\} \subseteq \cOcF$ is the multiplicative
set of Euler classes of $K$-essential representations. 
This localization is again a graded ring. 

Next we consider the category of modules $M$ over the diagram $\cOtcF$. 
Thus the value $M(K)$ is a module over $\cEi_K\cOcF$, and if
$L\subseteq K$, the structure map 
$$M(L)\lra M(K)$$
is a map of modules over the map 
$$\cEi_L \cOcF \lra \cEi_K \cOcF$$
of rings.   Note this map of rings is a localization since for any
complex representation $V$ of $G$, 
$V^L=0$ implies $V^K=0$ 
so that $\cE_{L}\subseteq \cE_{K}$. 
The category $\cA (G)$ is formed from a subcategory of the
category of $\cOtcF$-modules by adding structure. There are two requirements which
we briefly indicate here.    Firstly they must 
be {\em quasi-coherent}, in that they are determined by their 
value at the trivial subgroup $1$ by the formula 
$$M(K):=\cEi_K M(1). $$

The second condition involves the relation between $G$ and its quotients. 
Choosing a particular connected subgroup $K$, we consider the
relationship between the group
$G$ with the collection $\cF$ of its finite subgroups  
and the quotient group $G/K$ 
with the collection $\cF /K$ of its finite subgroups.  
For $G$ we have the ring $\cOcF$ and for $G/K$ we have 
the ring
$$\cOcFK =\prod_{\tK \in \cF /K}H^*(BG/\tK)$$
where we have identified finite subgroups of $G/K$ with 
their inverse images in $G$, i.e., with subgroups $\tK$ of $G$
having identity component $K$. Combining the inflation maps associated
to passing to quotients by $K$ for individual groups, there is an inflation map 
$$\cOcFK \lra \cOcF. $$

The second condition is  that the object should be {\em extended}, in 
the sense that for each connected subgroup $K$ there is a specified isomorphism 
$$M(K) \iso \cEi_K \cOcF \otimes_{\cOcFK} \phi^K M$$
for some $\cOcFK$-module $\phi^KM$, which is  a given part of the structure. 
These identifications should be compatible  when we have inclusions of connected
subgroups.  If we choose a subgroup $L$ then the modules $\phi^KM$ for
$K\supseteq L$ fit together to make an object of $\cA (G/L)$.

\subsection{Diagrams of quotient pairs.}
For some purposes it is useful to have an alternative view of $\cA
(G)$ as introduced in \cite{tnq2} making more of the structure
explicit. Here the values $\phi^HM$ are all
displayed in a single diagram indexed by pairs of quotient
groups. Pairs of quotient groups are equivalent to pairs of subgroups,
but here we will stick with the indexing by quotients $G/K$ as in
\cite{tnq2} since it is the quotients that enter most directly into
the model.  We use the notations $\RRc^p$ for the ring and
$\cA_c^p(G)$ for the category as in \cite{AGs}, since this is
descriptive of the fact that we use {\bf p}airs of {\bf c}onnected
subgroups.  

\begin{defn}
The diagram of {\em quotient pairs} of $G$ is the partially ordered
set with objects $(G/K)_{G/L}$ for $L\subseteq K \subseteq G$, and with two 
types of morphisms. The {\em horizontal} morphisms
$$h_K^H: (G/K)_{G/L} \lra (G/H)_{G/L} \mbox{ for } L\subseteq K \subseteq H \subseteq G$$
and the {\em vertical} morphisms
$$v_L^K: (G/H)_{G/K} \lra (G/H)_{G/L} \mbox{ for } 
L\subseteq K \subseteq H \subseteq G.$$

\end{defn}

One particular diagram will be of special significance for us. 
\begin{defn}
The structure diagram for $G$ is the diagram of rings $\RRc^p$ defined by 
$$\RRc^p (G/K)_{G/L}:=\cEi_{K/L} \cOcFL . $$
Since $V^K=0$ implies $V^H=0$,  we see that 
$\cE_{H/L}\supseteq \cE_{K/L}$,  so it is legitimate to take the 
horizontal maps to be localizations
$$h_K^H: \cEi_{K/L} \cOcFL \lra \cEi_{H/L} \cOcFL . $$
To define the  vertical maps,  we begin with the inflation map 
$\infl_{G/K}^{G/L}: \cOcFK \lra \cOcFL$, and then observe that 
if $V$ is a representation of $G/K$ with $V^H=0$, it may be regarded
as a representation of $G/L$, and Euler classes correspond in the sense
that  $\infl(e_{G/K}(V))=e_{G/L}(V)$. We therefore obtain a map  
$$v_K^L: \cEi_{H/K}\cOcFK \lra \cEi_{H/L}\cOcFL. $$
\end{defn}

Illustrating this for a group $G$ of rank 2, we obtain
$$\diagram
           &                    &\cOcFG    \dto\\
           &\cOcFK \rto \dto &\cEi_{G/K}\cOcFK\dto\\
\cOcF \rto &\cEi_K \cOcF \rto      &\cEi_G    \cOcF\\
\enddiagram
$$
At the top right, of course $\cOcFG=\Q$, but clarifies the formalism
to use the more complicated notation.

In discussing modules, we need to refer to the structure maps for rings, 
so for an $\RRc^p$-module $M$, if $L\subseteq K \subseteq H\subseteq G$, 
we generically write 
$$\alpha_K^L: M(G/H)_{G/K}\lra M( G/H)_{G/L}$$ 
for the vertical map, and 
$$\alphat_K^L: \cEi_{H/L}\cOcFL \tensor_{\cOcFK}M(G/H)_{G/K}
=(v_K^L)_*M(G/H)_{G/K} \lra M(G/H)_{G/L}$$ 
for the associated map of $\cOcFL$-modules. Similarly,
we generically write 
$$\beta_K^H: M(G/K)_{G/L}\lra M(G/H)_{G/L}$$ 
for the horizontal map, and 
$$\betat_K^H: \cEi_{H/L}M(G/K)_{G/L}
=(h_K^H)_*M(G/K)_{G/L} \lra M( G/H)_{G/L}$$ 
for the associated map of $\cEi_{H/L}\cOcFL$-modules, which 
we refer to as the {\em basing map} after \cite{s1q}.

\begin{defn}
If $M$ is an $\RRc^p$-module, we say that $M$ is {\em extended} if
whenever $L\subseteq K \subseteq H$ the vertical map  $\alpha_K^L$
is an extension of scalars along $v_K^L:\cEi_{H/K}\cOcFK \lra 
\cEi_{H/L}\cOcFL,  $ which is to say that 
$$\alphat_K^L: \cEi_{H/L}\cOcFL \tensor_{\cOcFK} M(G/H)_{G/K}
\stackrel{\cong}\lra M(G/H)_{G/L}$$ 
is an isomorphism of $\cEi_{H/L}\cOcFL $-modules. 

If $M$ is an $\RRc^p$-module, we say that $M$ is 
{\em quasi-coherent} if
whenever $L\subseteq K \subseteq H$ the horizontal map 
$\beta_K^H$ is an extension of scalars along $h_K^H:\cEi_{K/L}\cOcFL
\lra \cEi_{H/L}\cOcFL$, which is to say that 
$$\betat_K^H: \cEi_{H/L}M(G/K)_{G/L}\stackrel{\cong}\lra M(G/H)_{G/L}$$ 
is an isomorphism. 

We write $\mbox{$qc$-$\RRc^p$-mod}, \mbox{$e$-$\RRc^p$-mod}$ and 
$\cA_c^p(G):=\mbox{$qce$-$\RRc^p$-mod}$ for the full subcategories
of  $\RRc$-modules with the indicated properties. 
\end{defn}

Next observe that the most significant part of the information in an 
extended object is displayed in its restriction to the leading diagonal. 
For example in our rank 2 example they take the form
$$\diagram
           &                    & M(G/G)_{G/G}    \dto\\
           &M(G/K)_{G/K} \rto \dto &\cEi_{G/K}\cOcFK\tensor_{\cOcFG} M(G/G)_{G/G} \dto\\
M(G/1)_{G/1} \rto &\cEi_K \cOcF \tensor_{\cOcFK}M(G/K)_{G/K} \rto      
&\cEi_G    \cOcF\tensor_{\cOcFG}M(G/G)_{G/G}\\
\enddiagram
$$

In effect our description of the category $\cA (G)$  
abbreviates such a diagram by just writing the final 
row and taking $\phi^K M=M(G/K)_{G/K}$:
$$\diagram
\phi^1M \rto &\cEi_K \cOcF \tensor_{\cOcFK}\phi^KM \rto      
&\cEi_G    \cOcF\tensor_{\cOcFG}\phi^GM, \\
\enddiagram
$$
leaving it implicit that the particular decomposition as a tensor product
is part of the structure.

\begin{lemma} \cite[5.5]{tnq2}
The functor 
$$i: \cA (G) \lra \cA_c^p(G) =\mbox{$qce$-$\RRc^p$-$\mathrm{mod}$}$$ 
defined by 
$$i (M) (G/K)_{G/L}:=\cEi_{K/L}\phi^LM . $$
is an equivalence 
$$\cA (G) \simeq \cA_c^p (G).$$\qqed
\end{lemma}

Henceforth we will identify the two, thinking of $\cA (G)$ as given by the values of
$\cA_c^p(G)$ on the objects $(G/K)_{G/K}$ with additional structure
given by the horizontal and vertical maps. 

\subsection{Connection with topology}
The homotopy level connection between $G$-spectra and $\cA (G)$
is given by  a homotopy functor 
$$\piA_*: \Gspectra \lra \cA (G)$$
with the exactness properties of a homology theory. 
It is rather easy to write down the value of the functor as
a diagram of abelian groups.

\begin{defn}
\label{defn:piA}
For a $G$-spectrum $X$ we define $\piA_*(X)$ on $K$  by 
$$\piA_*(X)(K)=\pi^G_*(\DH \efp \sm \siftyV{K} \sm X).$$
Here $\efp$ is the universal space for the family $\cF$ of finite 
subgroups with a disjoint basepoint added and $\DH \efp =
F(\efp , S^0)$ is its functional dual
(the function $G$-spectrum of maps from $\efp$ to $S^0$).
For any closed subgroup $K$ of $G$, the $G$-space $\siftyV{K}$ is defined by  
$$\siftyV{K} =\bigcup_{V^K=0} S^V, $$
where $V$ runs through finite dimensional subrepresentations of a
complete $G$-universe, $\cU$.  When $K \subseteq H$, we find
$V^H\subseteq V^K$ so  there is a map 
$\siftyV{K} \lra \siftyV{H}$, and this  induces the map 
$\piA_*(X)(K)\lra \piA_*(X)(H)$.\qqed
\end{defn}

The  definition of $\piA_*(X)$ shows that quasi-coherence for 
$\piA_*(X)$ is just a matter of understanding Euler classes. 
The extendedness of  $\piA_*(X)$ is a little more subtle, and will 
play a significant role later.  Extendedness   follows from properties
of the geometric fixed point functor. We may take
$$\phi^K \piA_*(X)= \pi^{G/K}_*(\DH \efkp \sm \Phi^K(X)),$$ 
where $\PK$ is the geometric fixed point functor using the 
 map $\infl (D\efkp) \lra D\efp \sm \siftyV{K}$ (see \cite[9.2]{tnq1}
for details).

To see that $\piA_*(X)$ is a module over $\cO$, the key is to understand $S^0$.
 
\begin{thm}
\cite[1.5]{tnq1}
The image of $S^0$ in $\cA (G)$ is the structure functor:
$$\widetilde{\cO}_{\cF} =\piA_*(S^0), $$
with the canonical structure as an extended module. 
\end{thm}

Some additional work confirms that $\piA_*$ has the appropriate behaviour.  
\begin{cor}
\cite[1.6]{tnq1}
The functor $\piA_*$ takes values in the abelian category $\cA (G)$.
\end{cor}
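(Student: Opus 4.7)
The plan is to verify the three defining conditions of $\cA(G)$ for $\piA_*(X)$: the $\cOtcF$-module structure, quasi-coherence, and extendedness. The module structure is essentially a formal consequence of the preceding theorem identifying $\cOtcF = \piA_*(S^0)$: since $\piA_*$ is built by smashing against $\DH \efp \sm \siftyV{K}$ and taking equivariant homotopy, and these operations are lax monoidal, the unit $S^0 \sm X \lra X$ induces at each connected subgroup $K$ a $\piA_*(S^0)(K)$-module structure on $\piA_*(X)(K)$, and one checks compatibility with structure maps $\piA_*(X)(L) \lra \piA_*(X)(K)$ for $L \subseteq K$.

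For quasi-coherence, I would exploit the description $\siftyV{K} = \colim_{V^K=0} S^V$. Smashing with $S^V$ for a representation $V$ with $V^K = 0$ acts on $\pi^G_*(\DH \efp \sm X)$ as multiplication by the Euler class $c(V) \in \cE_K$ (upon identifying this with a piece of $\cOcF$). Since homotopy commutes with the sequential colimit defining $\siftyV{K}$, one obtains
$$\piA_*(X)(K) = \pi^G_*(\DH \efp \sm \siftyV{K} \sm X) = \cEi_K \pi^G_*(\DH \efp \sm X) = \cEi_K \piA_*(X)(1),$$
which is precisely quasi-coherence.

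Extendedness is the most substantial point. I would take the candidate $\phi^K \piA_*(X) := \pi^{G/K}_*(\DH \efkp \sm \Phi^K X)$ as a $\cOcFK$-module, and prove
$$\piA_*(X)(K) \;\cong\; \cEi_K \cOcF \otimes_{\cOcFK} \pi^{G/K}_*(\DH \efkp \sm \Phi^K X).$$
The crucial geometric input is the standard equivalence $\pi^G_*(Y \sm \siftyV{K}) \cong \pi^{G/K}_*(\Phi^K Y)$, applied to $Y = \DH \efp \sm X$. Together with the monoidality of $\Phi^K$, the fact that $\Phi^K \siftyV{K} \simeq S^0$, and the key identification $\Phi^K(\DH \efp) \simeq \DH \efkp$ as $G/K$-spectra, this reduces $\piA_*(X)(K)$ to $\pi^{G/K}_*(\DH \efkp \sm \Phi^K X)$; the Euler-class localization then accounts for the change of rings $\cOcFK \to \cEi_K \cOcF$ via the inflation map. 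Compatibility across inclusions of connected subgroups follows from the naturality of $\Phi^K$ and the corresponding maps on universal spaces.

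The main obstacle is the identification $\Phi^K(\DH \efp) \simeq \DH \efkp$: since $\DH \efp$ is a function spectrum and $\Phi^K$ does not commute with $F(-,-)$ in general, one must instead argue via the concrete cell structure of $\efp$ and the fact that the finite subgroups of $G$ surviving after geometric $K$-fixed points are precisely those with identity component $K$, which correspond to the finite subgroups of $G/K$ that assemble $\efkp$. Once this identification is in place, the tensor-product description of extendedness is a bookkeeping matter of tracking how inflation along $\cOcFK \lra \cOcF$ interacts with the Euler-class localization.
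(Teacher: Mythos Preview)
Your outline for the module structure and for quasi-coherence is sound and matches the paper's indication that quasi-coherence is ``just a matter of understanding Euler classes''. The genuine problem is in your treatment of extendedness: the identification $\Phi^K(\DH \efp)\simeq \DH \efkp$ is \emph{false}, and the paper says so explicitly. There is a comparison map $\DH\efkp\to\Phi^K\DH\efp$ of ring $G/K$-spectra, but on coefficients it is the inflation $\cOcFK\to\cEi_K\cOcF$, and the paper remarks that ``the codomain is enormously bigger'' and the map ``is very far from being an equivalence''. Your proposed justification via the cell structure of $\efp$ cannot work either: for a nontrivial connected $K$ one has $(\efp)^K=\ast$, so $\Phi^K\Sigma^{\infty}\efp\simeq\ast$, but $\Phi^K$ does not commute with $F(-,S^0)$, and $\Phi^K\DH\efp$ is in fact the large ring with homotopy $\cEi_K\cOcF$, not the small one with homotopy $\cOcFK$.

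What actually produces extendedness is an extension-of-scalars statement rather than an equivalence of rings. The relevant input (appearing later in the paper as Lemma~\ref{lem:PhiK}) is the natural equivalence
\[
\siftyV{K}\sm \DH\efp\sm X\;\simeq\;(\siftyV{K}\sm \DH\efp)\tensor_{\infl \DH\efkp}\infl(\DH\efkp\sm \Phi^K X),
\]
which on passage to $\pi^G_*$ gives
\[
\piA_*(X)(K)\;\cong\;\cEi_K\cOcF\tensor_{\cOcFK}\pi^{G/K}_*(\DH\efkp\sm\Phi^KX),
\]
once one knows the flatness of $\cEi_K\cOcF$ over $\cOcFK$. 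In other words, the ring map $\DH\efkp\to\Phi^K\DH\efp$ is not an equivalence but it does induce an isomorphism after base change, and that is exactly the content of extendedness. Your sketch conflated these two things; once you replace the false equivalence by the correct base-change equivalence (and supply the flatness input), the rest of your argument goes through.
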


\subsection{The Adams spectral sequence}
The homology theory $\piA_*$ may be used as the basis of an 
Adams spectral sequence for calculating maps between rational
$G$-spectra. The main theorem of \cite{tnq1} is as follows.

\begin{thm} (\cite[9.1]{tnq1})
For any rational $G$-spectra $X$ and $Y$ there is a natural
Adams spectral sequence
$$\Ext_{\cA (G)}^{*,*}(\piA_*(X) , \piA_*(Y))\Rightarrow [X,Y]^{G}_*.$$
It is a finite spectral sequence concentrated in rows $0$ to $r$ (the rank of $G$)
and strongly convergent for all $X$ and $Y$. \qqed
\end{thm}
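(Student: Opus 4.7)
The plan is to construct a classical Adams-type spectral sequence based on the homology theory $\piA_*$, using an injective resolution in $\cA(G)$. The crucial global input will be the finite injective dimension of $\cA(G)$, which is known to equal the rank $r$ of $G$; this is what forces the spectral sequence to have only finitely many nonzero rows and to converge strongly for all $X$ and $Y$ without any boundedness hypotheses.

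\textbf{Step 1: Adams injectives and realization.} First I would identify a class of $G$-spectra $I$ which behave like injectives for $\piA_*$, in the sense that (a) $\piA_*(I)$ is injective in $\cA(G)$, and (b) for every rational $G$-spectrum $X$ the natural map
\[
[X,I]^G_* \lra \Hom_{\cA(G)}(\piA_*(X),\piA_*(I))
\]
is an isomorphism. The candidates are built from suspensions of the basic cells $G/H_+$ localized and coinduced appropriately; the rigidity statement for standard cells (cited from \cite[12.1]{tnq1}) ensures that the Hom computation is determined algebraically. The main technical claim at this stage is that every injective object of $\cA(G)$ is realizable as $\piA_*(I)$ for such an $I$. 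For this I would use the explicit description of $\cA(G)$ as a category of quasi-coherent extended modules over the diagram $\cOtcF$: injectives decompose according to the stratification by subgroups (reducing to injectives in $\cEi_K\cOcF$-modules with compatible extendedness), and each local injective is realized using products of shifted, Borel-completed $G$-spectra obtained from $\DH\efp$, the idempotent splitting, and suspensions by representation spheres $\siftyV{K}$ as in Definition \ref{defn:piA}.

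\textbf{Step 2: Construction of the resolution and the spectral sequence.} Given $Y$, I would choose an injective envelope $\piA_*(Y)\hookrightarrow J^0$ in $\cA(G)$, realize $J^0$ as $\piA_*(I^0)$ for an Adams injective $I^0$, and lift the algebraic map to a map $Y\to I^0$ using property (b). Iterating on the cofibre yields an Adams tower
\[
Y=Y_0 \lra I^0,\quad Y_1 \lra I^1,\quad Y_2\lra I^2,\ \ldots
\]
Applying $[X,-]^G_*$ gives the usual exact couple and hence a spectral sequence whose $E_2$-term, by property (b), is precisely $\Ext^{*,*}_{\cA(G)}(\piA_*(X),\piA_*(Y))$, converging (conditionally) to $[X,Y]^G_*$.

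\textbf{Step 3: Finiteness and strong convergence.} Because $\cA(G)$ has injective dimension $r$, after $r$ steps one can choose $I^r$ so that $Y_{r+1}$ has trivial $\piA_*$. Then $Y_{r+1}$ lies in the kernel of $\piA_*$; using the Whitehead-type statement that a morphism of rational $G$-spectra inducing an isomorphism on $\piA_*$ is an equivalence (this follows from the generating property of the $G/H_+$ and the rigidity of cells), $Y_{r+1}$ is contractible. Hence the tower has length $r$, the spectral sequence is concentrated in rows $0$ to $r$, and strong convergence is automatic from the finite filtration.

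\textbf{Main obstacle.} The delicate point is Step 1: having enough Adams injectives and verifying property (b). One must realize every injective of $\cA(G)$ by a $G$-spectrum whose mapping-in functor on $\piA_*$-homology is corepresented by the algebraic Hom in $\cA(G)$. This requires a careful combination of the idempotent decomposition of the rational $G$-equivariant sphere, the geometric fixed-point description of $\phi^K$, and control of Euler-class localizations; the formalism needed to organize these ingredients coherently (a diagram of ring spectra $\widetilde{R}_{top}$ indexed by connected subgroups) is precisely what the rest of the paper develops, so in practice one proves the theorem after setting up the machinery, rather than directly.
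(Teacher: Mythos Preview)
The paper does not give its own proof of this theorem: it is quoted verbatim from \cite[9.1]{tnq1} and closed with a \qqed, so there is nothing in the present paper to compare your argument against. Your outline is the standard Adams spectral sequence construction (realize enough injectives of $\cA(G)$ by spectra for which $[X,I]^G_*\cong\Hom_{\cA(G)}(\piA_*X,\piA_*I)$, build an Adams tower, and use the finite injective dimension of $\cA(G)$ to truncate), and this is indeed the shape of the argument in \cite{tnq1}; the paper here only records the statement and its corollary that cells are determined by their homology.

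One small correction to your Step~1: the Adams injectives in \cite{tnq1} are not built from the cells $G/H_+$ but rather from the spectra $f_H(N)$ right adjoint to evaluation at a connected subgroup $H$ (see the discussion around Lemma~\ref{lem:hom} in this paper, where the basic injectives $\bI_{\Ht}=f_H(H_*(BG/\Ht))$ appear). The rigidity result \cite[12.1]{tnq1} for cells is a \emph{consequence} of the spectral sequence, not an input to constructing it, so invoking it in Step~1 is circular. The correct input for property~(b) is the adjunction $\Hom_{\cA(G)}(X,f_H(N))\cong\Hom_{\cOcFH}(X(H),N)$, which reduces the verification to a single-ring computation.
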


This was what led us to attempt to prove the main theorem of the
present paper, and many of
the methods used to construct the Adams spectral sequence are adapted 
to the present work. Nonetheless, it appears that the only way we explicitly use the
Adams spectral sequence  is in the fact that cells are characterized by their
homology. 

\begin{cor}\label{cor.cell.homology} \cite[12.1]{tnq1}
If $X$ is a $G$-spectrum with $\piA_*(X)\cong \piA_*(G/H_+)$ then 
$X\simeq G/H_+$.
\end{cor}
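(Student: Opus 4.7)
The plan is to use the Adams spectral sequence of the preceding theorem to lift the given algebraic isomorphism to a genuine map of $G$-spectra, and then to argue by detection that this map must be a weak equivalence. Write $\phi\colon \piA_*(G/H_+) \lraiso \piA_*(X)$ for the given isomorphism in $\cA(G)$, and view it as an element of $E_2^{0,0}$ in the Adams spectral sequence
$$
\Ext^{s,t}_{\cA (G)}(\piA_*(G/H_+), \piA_*(X)) \Rightarrow [G/H_+, X]^G_{t-s}.
$$

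The first and main task is to produce a map $f\colon G/H_+ \lra X$ with $\piA_*(f) = \phi$. Because the spectral sequence is concentrated in rows $0$ through $r$ and is strongly convergent, it is enough to verify that $\phi$ is a permanent cycle and lies in the image of the edge homomorphism. The natural approach is to exploit the fact that $G/H_+$ is one of the standard generating cells: by the Wirthm\"uller adjunction, $[G/H_+, X]^G_* = \pi^H_*(X)$, and this should be matched on the algebraic side by showing that $\piA_*(G/H_+)$ is projective in $\cA (G)$, or equivalently that $\Ext^{>0}_{\cA(G)}(\piA_*(G/H_+), \piA_*(Y))$ vanishes for any $G$-spectrum $Y$. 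Either formulation forces the spectral sequence to collapse onto the zero line and makes the edge map
$$
\Hom_{\cA(G)}(\piA_*(G/H_+), \piA_*(X)) \lraiso [G/H_+, X]^G_0
$$
an isomorphism, so $\phi$ lifts uniquely to the desired $f$. This projectivity is the key algebraic fact to establish, and I would verify it using the explicit formula of Definition~\ref{defn:piA} together with the finite injective dimension of $\cA (G)$.

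Given $f$, the remaining step is detection. Let $C$ be the cofibre of $f$; since $\piA_*$ is exact and $\piA_*(f) = \phi$ is an isomorphism, $\piA_*(C) = 0$. For any closed subgroup $K$, the Adams spectral sequence computing $[G/K_+, C]^G_*$ then has $E_2 = 0$, and strong convergence yields $\pi^K_*(C) = 0$ for every $K$. Since a rational $G$-spectrum with all fixed-point homotopy groups vanishing is contractible, $C \simeq *$, and $f\colon G/H_+ \lra X$ is a weak equivalence.

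The main obstacle in the above plan is the projectivity/Ext-vanishing property of $\piA_*(G/H_+)$ that makes the Adams spectral sequence collapse on the zero line for this particular source. Once that purely algebraic fact about $\cA (G)$ is in hand, the lift of $\phi$ and the detection argument go through routinely using the exactness of $\piA_*$ and the strong convergence of the spectral sequence.
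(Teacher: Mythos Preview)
Your overall plan --- lift the isomorphism through the Adams spectral sequence, then conclude by detection --- is exactly the paper's strategy, and your cofibre argument is a correct unpacking of the statement that $\piA_*$ detects weak equivalences. The gap is in the mechanism you propose for making $\phi$ survive.

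You suggest that $\piA_*(G/H_+)$ is projective in $\cA(G)$, so that the spectral sequence collapses onto the zero line. This is almost certainly false: $\cA(G)$ is a category of torsion-type objects of finite injective dimension, and such categories typically have no nonzero projectives at all (think of torsion abelian groups, or of torsion $H^*(BG)$-modules in the free case treated in \cite{gfreeq}). Your parenthetical ``or equivalently that $\Ext^{>0}_{\cA(G)}(\piA_*(G/H_+),\piA_*(Y))=0$ for all $Y$'' is neither equivalent to projectivity nor itself likely to hold; the Wirthm\"uller identification $[G/H_+,X]^G_*\cong \pi^H_*(X)$ on the topological side does not force the algebraic functor $\Hom_{\cA(G)}(\piA_*(G/H_+),-)$ to be exact.

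What the paper actually does is more modest and more concrete. It writes down an explicit resolution of $\piA_*(G/H_+)$ in $\cA(G)$ and uses it to obtain just the targeted vanishing at the $E_2$-page needed for $\phi\in E_2^{0,0}$ to be a permanent cycle. Since the spectral sequence is concentrated in rows $0$ through $r$, only finitely many differentials out of bidegree $(0,0)$ are in play, and one does not need all of $\Ext^{>0}$ to vanish. So the correct replacement for your projectivity step is an explicit Ext computation from a concrete resolution of $\piA_*(G/H_+)$, not an abstract projectivity argument.
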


The proof proceeds by giving an explicit resolution of $\piA_*(G/H_+)$
in  $\cA(G)$, and then observing that this gives appropriate vanishing
at the $E_2$-page so as to ensure an isomorphism 
$\piA_*(X)\cong \piA_*(G/H_+)$ lifts to a homotopy class of maps
$G/H_+ \lra X$. Since $\piA_*$
detects weak equivalences, this suffices. Evidently, this argument
applies in any model category with a similar Adams spectral sequence. 

In the present paper, we often need to know how our chosen cells
behave under functors between model categories. We will apply the 
corollary repeatedly to see that each cell maps to the obvious object up to 
equivalence.

\section{Cochain ring spectra}
\label{sec:modelsofspectra}

The purpose of this section is to discuss the particular model of
$G$-spectra that we use. Much of our argument takes place at the level
of  homotopy categories, but we need several formal properties that
depend on properties of functors at the model category level. In
various ways these are all associated to monoidal structure.

The required properties of the model category of $G$-spectra itself
are standard properties of monoidal model categories, enjoyed by all models
that we might consider. Next, we need certain properties of the change
of groups functors relating the properties of $G$-spectra to those of 
$Q$-spectra for subquotients $Q$ of $G$. One could imagine formalizing
the required properties of all these model categories and functors, 
but we will just use models for which fixed point and inflation
functors have  good behaviour with respect to
the smash product. 

Finally, we might ask for properties of the monoidal structures themselves. For
homotopy level arguments we just need to know standard properties
(bilinearity, compatibility with suspension and space level
constructions etc).  The delicacy arises from the need to consider arguments
modelled on those of commutative ring theory, in precisely the sense
that the ring spectra  are $E_{\infty}$-rings (i.e., algebras over a non-equivariant  $E_{\infty}$-operad, which is to say $E_{\infty}'$   in the sense of McClure
\cite{McClure}).  It is convenient for our arguments 
that the  commutative rings are the commutative monoids for the smash
product.  However, the commutative monoids for the smash product on 
orthogonal $G$-spectra have additional structure (such as norms) that
are incompatible with the homotopy type of some commutative
rings. Accordingly,  it is convenient to use a different smash product, but one
which has the same underlying homotopy type. 

In Subsections \ref{subsec:sphere} and \ref{subsec:coeffs} we
introduce some of our basic decisions about coefficients. 
In Subsection \ref{subsec:commrings} we discuss some of the ring
spectra that we need in general terms. Finally, in 
Subsection \ref{subsec:cLspectra} we describe the Blumberg-Hill model of
spectra that we use, and explain why it has the properties we require.

\subsection{The sphere spectrum }
\label{subsec:sphere}
Just as abelian groups are $\Z$-modules, giving $\Z$ a special role, 
so too spectra are modules over the sphere spectrum $\bS$. Although $\bS$
is the suspension spectrum of $S^0$, we will generally use the special 
notation  $\bS$ to emphasize its special role.  Since we are working rationally,  
$\bS$ will denote the rational sphere spectrum, a commutative ring
constructed as a localization of the unit object.

\subsection{Choice of coefficients}
\label{subsec:coeffs}
Central to our formalism is that we consider `rings of functions' on
certain spaces, and then consider modules over these. In effect we
take a suitable model for cochains on the space with coefficients in a
ring. The purpose of the present subsection is to describe the
options at the level of the homotopy category, and explain why we end up simply using the functional dual
$DX=F(X,\bS)$ rather than one of the natural alternatives. 

If $X$ is a $G$-space and $k$ is a ring $G$-spectrum then we may write
$$C^*(X;k):=D_kX_+:=F_{\bS}(X_+,k)$$
for the  $G$-spectrum of functions from $X$ to $k$.
The first notation comes from the special case of an Eilenberg-Mac~Lane
spectrum, which gives a model for cohomology. The second notation comes
from the special case $k=\bS$ of the functional dual.
 This spectrum has a ring structure using the multiplication on $k$
 and the diagonal map
of $X$. If $k$ is a commutative ring spectrum then so is $C^*(X;k)$.

There are a number of related ring spectra of this form associated to
different choices of $k$ and 
we briefly discuss their properties before explaining which 
is most relevant to us.

First, we could take $k$ to be the rational sphere $G$-spectrum $\bS$,
alternatively, we could take it to be one of two Eilenberg-Mac~Lane $G$-spectra associated to Green 
functors. The first Green functor is the Burnside functor $\A$, whose
value on $G/H$ is the Burnside ring of $H$, and the second
Green functor is the constant functor $\Q$.

To start with we observe that there are maps
$$\bS \lra H\A \lra H\Q$$
of ring $G$-spectra
where the first map kills higher homotopy groups and the second 
kills the augmentation ideal.  It is elementary to construct these
by killing homotopy groups in the category of $E_{\infty}$-rings. 
In fact $\A$ and $\Q$ are Tambara functors, and one expects such maps can be constructed by the
process of killing homotopy groups conducted in the category of
commutative ring $G$-spectra\footnote{A full justification would
  involve generalizing the arguments of \cite{Ullman} to the compact 
Lie case. Essentially we need to know that there are free  
$E_{\infty}^G$-ring spectra (i.e., free as equivariant $E_{\infty}$-rings) and that ordinary cohomology with
coefficients in a  Tambara functor is represented by a commutative 
ring spectrum. However in this section we are only explaining why we
discard some options,  so it suffices to consider the underlying maps of spectra.}.  Any $G$-space $X$ has a diagonal and a
map to the terminal object, making it a cocommutative coring, adding a
disjoint basepoint and mapping into our sequence of maps of
commutative ring spectra we obtain the sequence
$$D_{\bS}X_+ \lra D_{H\A}X_+\lra D_{H\Q}X_+$$
of ring spectra. These are very far from being equivalences in general. For the
second map that is clear since $\A (G/H)\neq \Q$ if $H$ is
a non-trivial finite subgroup. For the first, it is clear from 
the fact that $\bS$ has non-trivial higher homotopy (even
rationally) when $G$ is not finite. 

\begin{lemma}
(i) If $X$ is free, the above maps are equivalences
$$D_{\bS}X_+ \simeq D_{H\A}X_+\simeq  D_{H\Q}X_+$$
of $G$-spectra. 

(ii) If  $X$ has only finite isotropy,
then the first map is an equivalence
$$D_{\bS}X_+ \simeq D_{H\A}X_+ $$
of $G$-spectra. 
\end{lemma}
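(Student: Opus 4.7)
The plan is to reduce both statements to a check on rational equivariant homotopy of $\bS$, $H\A$, and $H\Q$ at the generators of the relevant localizing subcategories, followed by an explicit computation via the tom Dieck splitting.

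Since $F(-,k)$ is a contravariant exact functor of $G$-spectra that converts coproducts into products, the class of $X$ for which a given map $F(X,k)\to F(X,k')$ is an equivalence is a thick subcategory closed under arbitrary wedges. The free $G$-spectra form the localizing subcategory generated by the single cell $G_+$, and the $G$-spectra with finite isotropy form the localizing subcategory generated by $\{G/F_+:F\in\cF\}$. It therefore suffices to check (i) on $X=G_+$ and (ii) on $X=G/F_+$ for arbitrary $F\in\cF$.

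Weak equivalences of $G$-spectra are detected by $\pi_*^H$ for all closed $H\le G$, and
$$\pi_*^H F(X,k) = [X,k]^H_* = [\res^G_H X,\,\res^G_H k]^H_*.$$
Restriction to $H$ preserves freeness and finite isotropy; moreover $\res^G_H(H\A)$ is the Eilenberg--MacLane $H$-spectrum for the Burnside Mackey functor of $H$, and $\res^G_H(H\Q)=H\Q$. Relabelling $H$ as $G$, the lemma thus reduces, with all groups rationalised, to the two assertions: (i$'$) $\pi_*^1(\bS)\to \pi_*^1(H\A)\to \pi_*^1(H\Q)$ is iso; and (ii$'$) $\pi_*^F(\bS)\to \pi_*^F(H\A)$ is iso for every $F\in\cF$. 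For (i$'$), Serre's theorem gives $\pi_*^1(\bS)=\Q$ in degree $0$, while $\pi_*^1(H\A)=\A(G/1)=\Q$ and $\pi_*^1(H\Q)=\Q$ there; both maps are the identity. For (ii$'$) the tom Dieck splitting applied to the finite group $F$ yields
$$\pi_n^F(\bS)_\Q \cong \bigoplus_{(K)\le F} H_n(BW_F(K);\Q),$$
and each finite Weyl group $W_F(K)$ contributes $\Q$ in degree $0$ and nothing in higher degrees. The total is the rational Burnside ring $\A(G/F)$ in degree $0$, matching $\pi_*^F(H\A)$, and the map is the canonical identification since $\bS\to H\A$ is the zeroth Postnikov section.

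The principal obstacle is the tom Dieck computation in (ii$'$); this is classical but carries the main content. The same calculation explains the exact scope of the lemma: the augmentation $\A(G/F)\to \Q$ fails to be an isomorphism when $F\neq 1$, which is why (ii) cannot be extended to include $H\Q$, and $\pi_*^H(\bS)_\Q$ picks up positive-degree contributions from the polynomial rational cohomology of $BW$ whenever the Weyl group is positive-dimensional, which is why neither part extends beyond finite isotropy.
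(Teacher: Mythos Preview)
Your proof is correct and follows essentially the same approach as the paper's: both reduce to the observation that $\bS$, $H\A$, and $H\Q$ share the same non-equivariant rational homotopy type (for part (i)), and that $\bS$ is rationally an Eilenberg--MacLane spectrum for any finite group of equivariance (for part (ii)). You have spelled out in detail the localizing-subcategory reduction and the tom~Dieck computation that the paper leaves implicit in its two-sentence proof.
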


\begin{proof}
For Part (i) we note that $\bS, H\A$ and $H\Q$ all have
non-equivariant homotopy $\Q$ in degree 0. 

For Part (ii), $\bS$ is (rationally) an Eilenberg-Mac~Lane
spectrum for any finite group of equivariance by tom Dieck splitting
(see \cite[Appendix A]{Tate}). 
\end{proof}

The functor $D_{H\Q}$ has the convenient property that there is an equivalence
$$(D_{H\Q}Y)^G\simeq D_{H\Q}(Y/G)$$
for any based $G$-space $Y$. On the other hand, this lets us calculate values 
which show the functor is not the one we want to use
(specifically, the homotopical analysis of \cite{tnq1} makes
clear that the homotopy groups of the cochains on $\efp$ should
be those of $D_{\bS} \efp$).  Henceforth we simply write
$$D(\cdot )=D_{\bS}(\cdot )$$
for the rational functional dual.

\subsection{Some commutative ring spectra}
\label{subsec:commrings}

Our arguments use ideas from commutative algebra, so we want to 
 work in a context where certain $G$-spectra $R$ behave like
 commutative rings.  What we need is a symmetric monoidal 
category of $R$-modules with a well behaved homotopy category,
which behaves well under various change of groups constructions.  
It is conceptually simplest if we work in a category of $G$-spectra
where the relevant rings $R$ actually are commutative monoids, and
we will describe such a context in  Subsection \ref{subsec:cLspectra}.

For now we identify ring structures by operad actions. We are
essentially recording the observations of McClure \cite{McClure}, but
updating terminology. We say  that a $G$-spectrum $X$
is an $E_{\infty}$-ring if it has an action of a non-equivariant $E_{\infty}$-operad
(viewed as a $G$-fixed $G$-space), such as the linear isometries
operad on a $G$-fixed universe. This is the least restrictive type of
 $N_{\infty}$-operad (in the sense of Blumberg-Hill
 \cite{BlumbergHill}), the one that  is as free as possible (so that the $n$-th term is universal for the
family of all subgroups of $G\times \Sigma_n$ of the form $H\times 1$). 
A $G$-spectrum is  an $E_{\infty}^G$-ring if it
has an action of a {\em $G$-equivariant} $E_{\infty}$-operad (such as the
linear isometries operad on a complete $G$-universe).  
This is the most restrictive type of $N_{\infty}$-operad, 
 with isotropy as large as possible (so that the $n$-th term is universal for the
family of all subgroups of $G\times \Sigma_n$ intersecting $\Sigma_n$ in the trivial group). 
The ring spectra we need are rather obviously $E_{\infty}$-ring
spectra,  whereas in some cases it requires extra work to show that they
are  $E_{\infty}^G$-ring spectra. 
McClure observes that $E_{\infty}^G$-rings have more structure that $E_{\infty}$-rings; this structure is 
used in \cite{GMMU} to define multiplicative norm maps, and the
relationship between the $E_{\infty}^G$ structure and the norm maps is
studied systematically by Hill and Hopkins \cite{HillHopkins}.  In
particular this shows that for a finite group $F$, an $E_{\infty}^G$-ring spectrum which is
non-equivariantly contractible must be $F$-equivariantly contractible
(since the norm of the unit is the unit).

Our examples start with the function spectrum $D\efp$. This is an
$E_{\infty}^G$-ring by \cite[Lemma 4 (a)]{McClure}
since it consists of maps from a $G$-space of the form $X_+$ (which has a strictly
cocommutative diagonal) into the $E_{\infty}^G$-ring $\bbS$.   We then
wish to consider  the spectra 
$\sifty{H} \sm D\efp$ for connected subgroups $H$, where
$$\sifty{H}=\bigcup_{V^H=0}S^V.$$
The homotopy type $\siftyV{H}\sm D\efp$ can be obtained as a smash product as written, or as the
Bousfield localization of $D\efp$ with respect to $\sifty{H}$. The
importance of $\sifty{H}$ is firstly that it has geometric
isotropy consisting of precisely the subgroups containing $H$,  and secondly that because of the way  it is
built from spheres it gives a close connection to algebraic
localizations. 

Furthermore $\sifty{H}$ also has excellent multiplicative properties. 
It is clear that it is a commutative ring up to homotopy, and by the
argument of \cite[Lemma 3]{McClure} it is a based $E_{\infty}$-space. 
Alternatively one may apply  \cite{HillHopkins} to see that Bousfield localization preserves the
existence of an action by a non-equivariant $E_{\infty}$-operad.
Because $G$ acts trivially on the operad, an $E_{\infty}$-ring
$G$-spectrum has the property that its categorical $H$-fixed point
spectrum is a $G/H$-spectrum which is also an algebra over an
$E_{\infty}$-operad. To avoid having to discuss flatness, if $R$ is
an $E_{\infty}$-ring, we will always construct $\sifty{H}\sm R$ as an
$E_{\infty}$-ring by Bousfield localization. The derived smash
product gives the homotopy type, and we use the  notation 
$\sifty{H}\smb R$ for the localization to remind us of this. 


The other construction we will need corresponds to taking countable 
products of commutative rings. It is clear that if objects $A_i$ admit 
an action of an operad $\cO$, then so does the product $\prod_i A_i$. 
This only  uses categorical properties of products. If this is to be 
homotopically meaningful we need to assume 
as usual that the objects $A_i$ are fibrant. We will apply this when 
$\cO$ is an  $E_{\infty}$-operad. 

\begin{remark}
\label{rem:siftyVEinftyG}
The  geometric isotropy of the spectrum $\sifty{H}$ consists of the subgroups containing $H$. 
If $H$ is not connected, the norm from the identity component $H_e$ to $H$ shows
that the  spectrum  $\sifty{H}$ does not  admit the structure of and
$E_{\infty}^G$-ring. However, if $H$ is {\em connected}, any inclusion $L\subseteq K$ from outside the geometric
isotropy (i.e., $L\not \supseteq H$) to inside the geometric isotropy
(i.e., $L\subseteq H$) is of infinite index, so the norm obstructions vanish. In fact \cite{McClureplus} one may generalize McClure'sargument  to show that in this case $\sifty{H}$ is an
$E_{\infty}^G$-ring. This means that in fact all the rings we need to discuss are
$E_{\infty}^G$-rings. 
\end{remark}

\subsection{The category of orthogonal $\mcL$-spectra}
\label{subsec:cLspectra}

We wish to work in a monoidal category of $G$-spectra in which the
rings we work with are commutative monoids. In this section we describe
our chosen category. One option is to work with  orthogonal $G$-spectra:  in view
of Remark \ref{rem:siftyVEinftyG} the rings we work with are
$E_{\infty}^G$-rings, and  these are precisely the the commutative
monoids in orthogonal  $G$-spectra. As shown in Remark
\ref{rem:otherfoundations}, this would provide  
foundations for our work.  However, since \cite{McClureplus}
is not yet in final form,  we have followed an alternative route. 

We retreat to $E_{\infty}$-rings and  use  a category of
spectra in which the commutative monoids are the
$E_{\infty}$-rings. It is natural to use the `operadic smash product'
approach of Elmendorf-Kriz-Mandell-May \cite{ekmm} applied to
orthogonal $G$-spectra. Such a category has been set up  by  Blumberg
and Hill  in 
\cite{BlumbergHill2}; their main concern is to understand the homotopy 
theory of different types of norm and different degrees of commutativity. Since we are only concerned with
the simplest type of commutativity and not with norms at all, we only
need the more formal parts of their argument,  which apply to all
compact Lie groups \cite[Appendix B]{BlumbergHill2}.

We are very grateful to Blumberg and Hill for discussions about their category,
and for  explicitly including statements from which the properties we
require are apparent.  We would also like to thank Blumberg for
suggestions which led to the current approaches to Properties (11) and (12) in Proposition \ref{prop:axiom}
below.

 The construction starts with the category $\SpOG$ of orthogonal $G$-spectra
based (additively) on a complete orthogonal $G$-universe $\cU$ as usual.  For the {\em multiplicative} properties, we now choose 
a  $G$-fixed universe $\cV$ (i.e., infinite dimensional but with
trivial $G$-action) with a view to constructing an operadic
smash product based on the $\cV$-linear isometries operad. 

More precisely, we  let  $\mcL$ denote the non-equivariant linear isometries operad
defined by 
$$\mcL (n) = \mathrm{Isom}(\cV^n, \cV).$$
There is an associated monad $\bL$  given by smashing with $\mcL
(1)_+$  and we  consider the category $\SpOG [\bL ]$ of $\bL$-algebras in orthogonal
$G$-spectra.  Applying $\bL$ is left adjoint to the forgetful functor
relating orthogonal $G$-spectra to those with an $\bL$ action. Since
$\mcL (1)$ is contractible, the functors relate objects with the same
underlying homotopy type. Precisely as in \cite{ekmm}, the category of $\bL$-spectra    has a symmetric monoidal smash product
$\smL$ and we restrict to those which are unital in the sense that the
unit map $S\sm_{\mcL} X\lra X$ is an isomorphism.  This category $\SpLOG$ of unital
$\bL$-orthogonal  $G$-spectra (denoted $G{\cS}_U$ in
\cite{BlumbergHill2})  is analogous to the category referred
to as $S$-modules in \cite{ekmm}, and  is a monoidal  model category satisfying the monoid
axiom.

\begin{prop}
\label{prop:axiom}
For a compact Lie group $G$, the category of orthogonal $G$-equivariant $\mcL$-spectra, $\SpLOG$,
has the following properties
\begin{enumerate}
\item It is a weakly symmetric,  monoidal,  proper,  $G$-topological model
  category satisfying the monoid axiom with weak equivalences detected by the forgetful functor 
$$U: \SpLOG \lra \SpOG$$
to orthogonal $G$-spectra.  
\item The functor $U$ is a right Quillen functor inducing an
  equivalence of homotopy categories, and preserves {and detects} all weak
  equivalences. 
\item The functor $U$ is lax monoidal, so the smash product is 
compatible with this equivalence of homotopy  categories, and  in the non-equivariant setting is monoidally equivalent to the usual
 smash product of orthogonal spectra.  
 \item The monoids in $\SpLOG$ are non-$\Sigma$ algebras over the
  linear isometries operad $\mcL$ 
\item The commutative monoids in $\SpLOG$ are algebras over the 
 linear isometries operad $\mcL$, and hence $E_{\infty}$-ring
 spectra. 
\item The Bousfield rationalization $\bbS$ of the sphere spectrum (see
  Property (8))
is a commutative monoid in $\SpLOG$.
\item The category of commutative monoids in $\SpLOG$ is cotensored over unbased spaces.
\item All left Bousfield localizations preserve commutative monoids,
  so that if $A$ is a commutative monoid, for any $E$ the map $A\lra
  L_EA$ is a map of commutative monoids. 
\end{enumerate}
The equivariant categories for $G$ and its quotients are related as
follows. 
\begin{enumerate}
\setcounter{enumi}{8}
\item For any closed normal subgroup $K$,  inflation 
$$\infl_{G/K}^{G}: \SpLOGK \lra \SpLOG$$
from $G/K$-spectra to $G$-spectra is strong symmetric monoidal and
therefore takes commutative monoids to commutative monoids. Inflation is a
left Quillen functor. 
\item The $K$-fixed point functor $(\cdot )^K: \SpLOG \lra \SpLOGK$
 is lax symmetric monoidal and hence preserves commutative
 monoids. The $K$-fixed point functor is a right Quillen functor. 
\item There is a zig-zag of Quillen equivalences between commutative
  monoids in $1$-spectra and commutative monoids in symmetric spectra. Let 
$$\FF : \Ho (\mbox{comm-mon-$\SpLOone$})\lra  \Ho (\mbox{comm-mon-$\SpSigma$})$$
denote  the derived functor.
\item For a commutative monoid $A$ in $1$-spectra, there is a Quillen
  equivalence 
$$\mbox{$A$-mod-$\SpLOone$}\simeq_Q \mbox{$\FF A$-mod-$\SpSigma$}$$
between the categories of $A$-modules over $1$-spectra and $\FF A$-modules over symmetric spectra.
\end{enumerate}
\end{prop}

\begin{proof}
(1)  is \cite[4.2, 4.6]{BlumbergHill2}. 

(2) is \cite[4.3, 4.8]{BlumbergHill2}. 

(3) is \cite[4.3, 4.9]{BlumbergHill2}. 

(4) and (5) are  \cite[3.18]{BlumbergHill2}. 

(6) is a special case of (8) given 
 the fact that the sphere spectrum is a commutative monoid. 

(7) is \cite[3.19]{BlumbergHill2}. 

(8) is \cite[6.4]{HillHopkinsGsymm}.  {This shows that the non-equivariant proof in~\cite[VIII.2.2]{ekmm} extends to the equivariant case.}

(9)  and (10) are  \cite[3.24, 4.18]{BlumbergHill2}. 

(11): 
First, \cite[3.18]{BlumbergHill2}, referring to ~\cite[II.4.6]{ekmm}, 
shows that commutative monoids in non-equivariant $\mcL$-spectra are isomorphic to the category of $E_{\infty}$-algebras in orthogonal spectra.  Then~\cite[0.14]{mmss} shows that $E_{\infty}$-algebras in orthogonal spectra and in symmetric spectra are Quillen equivalent and both are Quillen equivalent to the respective categories of commutative monoids.

 (12): 
The category of modules over a commutative monoid $A$ in
non-equivariant $\mcL$-spectra is isomorphic to the category of operadic modules over the associated $E_{\infty}$ orthogonal spectrum $\bbU A$ from~\cite[3.18]{BlumbergHill2}. As above, this follows by an analogue of the argument in~\cite[II.5.1]{ekmm}. See also~\cite{ElmendorfMandell} for a careful definition of operadic modules via multicategories.  

Next we use the monoidal Quillen equivalence between orthogonal
spectra and symmetric spectra from~\cite[0.4]{mmss} to show that the
category of operadic modules over an $E_{\infty}$-algebra in
orthogonal spectra is Quillen equivalent to the category of operadic
modules over an associated $E_{\infty}$-algebra in symmetric spectra
by~\cite[2.14]{BergerMoerdijk}. Note that the condition in~\cite[2.14]{BergerMoerdijk} about units is satisfied because the map in question is an isomorphism, and hence a cofibration. Finally,~\cite[1.4]{ElmendorfMandell}
shows that this category of modules over an $E_{\infty}$-algebra is
Quillen equivalent to the category of modules over a commutative monoid in symmetric spectra. The commutative monoid here may differ from the image of $\FF$, but the two are weakly equivalent. The statement then follows by~\cite[5.4.5]{hss}.
\end{proof}

\begin{remark}
\label{rem:otherfoundations}
Properties (1) - (12) embody very natural requirements of equivariant
spectra. We would expect our general strategy to be effective in other models 
when analogous properties hold.

In particular, for the category $\SpOG$ of orthogonal $G$-spectra
itself, properties (1), (2) and (3) are obvious (since $U$ is replaced
by the identity functor), and similarly properties (11) and (12)
become trivial. Properties (7), (9) and (10) are basic properties of
orthogonal spectra \cite{mm}. Property (6) is true because the rational
sphere is inflated from a fixed spectrum. 

The counterparts of Properties (4) and (5) for orthogonal $G$-spectra
replace the linear isometries operad $\mcL$ on the {\em $G$-fixed} universe
$\cV$ by the linear isometries operad $\mcL_G$ on a {\em complete}
$G$-universe $\cV$. This uses the traditional argument of
\cite[15.5]{mmss}, using \cite[B.117]{HHR}, which in turn corrects 
\cite[III.8.4]{mm}. We note that the statement in \cite{HHR} is only
given for finite groups, but their argument applies to arbitrary
compact Lie groups giving the full replacement for the statement in  \cite{mm}.

Property (8) is not true in $\SpOG$ for general $A$: this is the topic of
\cite{HillHopkins}. We need it for $A=\sifty{H}$ with $H$
connected:  \cite{McClureplus} shows  $\sifty{H}$ is a commutative
ring and we may obtain the localization by taking tensor product with $\sifty{H}$. 
\end{remark}

\part{Formality of the sphere spectrum}
\section{Diagrams of rings and modules}
\label{sec:ringsandmodules}
Throughout this paper we consider categories of modules over diagrams of
rings in three contexts: differential graded modules over DGAs, module
spectra over ring spectra, and module $G$-spectra over ring $G$-spectra. 
  In this section we describe the
context and the basic Quillen equivalences arising from a pullback
diagram of rings. These and related results are discussed more fully and proved in
\cite{diagrammodcats}.

\subsection{The archetype}
\label{sec:archetype}
Given a diagram shape $\bfD$, consider a diagram of rings $R:\bfD \lra \C$ in a symmetric monoidal category $\C$.  
Each map $R(a): R(s)\lra R(t)$ gives rise to an extension of scalars functor
$$ \modcat{R(s)}\stackrel{a_{*}}\lra \modcat{R(t)} $$
defined by $a_*(X)=R(t)\tensor_{R(s)}X$, 
 with right adjoint the restriction of scalars functor
$$ \modcat{R(s)}\stackrel{a^*}\lla \modcat{R(t)}. $$

Now consider a category of {\em $R$-modules}; 
the objects are diagrams $X: \bfD \lra \C$ for which $X(s)$ is an
$R(s)$-module for each object $s$, 
and for every morphism $a: s\lra t$ in $\bfD$, the map $X(a): X(s) \lra X(t)$ 
is a {\em module map over the ring map} $R(a): R(s) \lra R(t)$. More
precisely, there is a map $X(s) \lra a^*X(t)$ of $R(s)$-modules (the {\em
 restriction} form)  or,  equivalently, there is a map
$$R(t)\tensor_{R(s)} X(s) =a_*X(s) \lra X(t) $$
of modules over the ring $R(t)$ (the {\em extension of  scalars}
form).  Despite the simplicity of restriction of scalars,   we view
the {\em left} adjoint $a_*$ as the primary one, following the
convention that the left Quillen functor dicates the direction of a
Quillen pair.

\subsection{Model structures}\label{subsec:modstructure}
We say that a pseudo-functor $\mccM : \bfD \lra \mathrm{Cat}$ is {\em
  a diagram of model categories} if each category $\mccM (s)$ has a
model structure, the functors $a_*$ all have right
adjoints and  the adjoint pair $a_* \dashv a^*$ of functors relating the 
model categories form a Quillen pair. 

For instance, the motivating example of a diagram of ring spectra (or DGAs) gives a diagram of model
categories if we use the projective model structure on the category 
$\mccM (s)$ of  $R(s)$-modules; {see, for example,~\cite[4.1]{ss1}.}

When $\mccM$ is a diagram of model categories, there are two ways to attempt to put a model structure on 
the category of $\mccM$-diagrams $\{ X(s)\}_{s \in \bfD}$. The
{\em diagram-projective} model structure (if it exists) has its fibrations and weak  
equivalences defined  objectwise. The {\em diagram-injective} model
structure  (if it exists) has its cofibrations and 
weak  equivalences defined objectwise.  It must be checked in each 
particular case whether or not these specifications determine a model
structure. When both model structures exist, it is clear that the identity functors
define a Quillen equivalence between them.

We will apply \cite[Theorem 3.1]{diagrammodcats} to show that 
the diagram-projective and diagram-injective model structures exist in
the cases of interest to us. 

\subsection{Pullback diagrams of rings}
\label{subsec:pullbackofrings}
The basic input from the diagrams of model categories from
\cite{diagrammodcats} is as follows.  {Recall from~\cite[5.1.1]{hovey-model} that an {\em inverse category}, $\ccI$, requires a linear extension from $\ccI^{\mathrm{op}}$ to an ordinal.  This is dual to the notion of a direct category.}

\begin{prop} \cite[Proposition 4.1]{diagrammodcats}
\label{prop-gen-pb}
For $\ccI$ a finite, {inverse} category with at most one morphism in
each $\ccI(s,t)$ and $R$ a $\ccI$-diagram of ring spectra with
homotopy inverse limit $\bbarmccR$, there is a zig-zag of Quillen
equivalences between the category of $\bbarmccR$-modules and the
cellularization with respect to $\mccR$ of $\mccR$-modules (with the
diagram-injective model structure):
\[ \modcat{\bbarmccR} \simeq_{Q} \mbox{$\mccR$-cell-}\modcat{\mccR} \]
\end{prop}

We will apply this when $\ccI$ is a punctured cube, and
$\bbarmccR=\bbS$ is the sphere spectrum. Indeed the category of $G$-spectra
is equivalent to the category of module-$G$-spectra over the sphere
spectrum $\bbS$. By Proposition \ref{prop-gen-pb}, this 
is in turn equivalent to the cellularization of a category of modules
over a diagram of ring $G$-spectra. The rest of the work will be based
on diagrams of this punctured cube shape. The argument   proceeds by replacing the diagram of
ring $G$-spectra successively by diagrams of nonequivariant ring
spectra, DGAs and finally graded commutative rings. 

Our next task is the homotopical core of the paper.  We show that the
sphere is the pullback of a diagram of spectra which are both
isotropically  simpler and very rigid.

\section{The sphere as an isotropic pullback}
\label{sec:isotropiccube}

Our analysis is based on expressing the sphere spectrum as the
homotopy pullback of an $(r+1)$-cube of ring $G$-spectra. When $G$ is
the circle group this is the $\cF$-Tate square \cite{Tate}
$$\diagram 
S^0\rto \dto  & \etf \dto \\
D\efp \rto   & \etf \sm D\efp,  
\enddiagram$$
but usually it is more complicated. In fact  we will construct a diagram $\Rttop : C \lra
\mbox{Ring-$G$-spectra}$ where the cube $C$ is the poset  of subsets of  $\{
0, \ldots , r\}$, where $\bbS$ is the value on the initial vertex (the
empty set) and so that this is equivalent to the homotopy pullback of
the restriction of $\Rttop$ to the punctured cube $PC$ of non-empty
subsets. 

\subsection{Strategy}

In the course of the proof, we will need to consider an extension of  $\Rttop$ to a bigger
diagram, and we introduce this extended diagram as we go along. The cube $C$ above will
appear as $C=\Cf$ in due course.  The letter $f$ stands for `formal' though the word `affine'  or the word `rigid'
would be sensible alternatives. Corollary \ref{cor:SisPCfpb} will show
that the sphere spectrum is the homotopy pullback of $\Rttop$
restricted to the punctured cube $\PCf$, so that the results of \cite{diagrammodcats} (as quoted in
Proposition \ref{prop-gen-pb})  show that 
the category of modules over the sphere spectrum is equivalent to the 
cellularization of the category of  modules over the  $PC_f$-diagram
of ring $G$-spectra. The reason
 this is useful is that the  ring $G$-spectra  $A$ at the vertices of the punctured cube $\PCf$
have two very special rigidity properties. Firstly, as in
\cite{modulefps} passage to $G$-fixed points gives
an equivalence between categories of $A$-module-$G$-spectra and
categories of  $A^G$-module-spectra. This means we can reduce from
considering $\Rttop$-modules in $G$-spectra to considering  modules over
 the $\PCf$-diagram $\Rtop =(\Rttop)^G$ of non-equivariant ring spectra. We can then use the second
 author's results to move to considering modules over a $\PCf$-diagram
 $\Rt$ of DGAs. The second feature of the spectra $A$ is that
 $\pi^G_*(A)=\pi_*(A^G)$ is intrinsically formal in that any commutative DGA with
 this homology is equivalent to $\pi^G_*(A)$ with zero
 differential. As shown in Section \ref{sec:formality}, the proof of this is compatible with the $\PCf$-diagram, so we
 are reduced to considering  DG-modules over a $\PCf$-diagram $\Ra$ of
 graded rings. We may then show the cellularized category of
 $\Ra$-modules is equivalent to the category $\cA (G)$ of \cite{tnq1}.

Our first task (Sections \ref{sec:isotropiccube} and
\ref{sec:formalcube})  is  to describe the $(r+1)$-cube $\Cf$ of ring
spectra with the sphere spectrum at the initial vertex and to show  it
is a homotopy pullback. We will do this in steps: we 
identify $\Cf$ inside a larger diagram $\Cif$ containing a second cube
$\Ci$, giving inclusions of diagrams
$$\Ci \subseteq \Cif \supseteq \Cf, $$
and we will prove equivalences  
$$\diagram
S^0\ar[r]^-1_-\simeq&\holim_{v\in \PCi} \Rttop (v) &\holim_{v\in \PCif}
\Rttop (v) \ar[l]_-2^-\simeq\ar[r]^-3_-\simeq&
\holim_{v\in \PCf} \Rttop (v) .
\enddiagram $$
 Of these, Equivalence 2 is elementary, since  $\PCi$ is cofinal in
 $\PCif$.  Equivalence 1 (Proposition \ref{prop:CiPB}) is the essential one, since in fact
 Equivalence 3 (Proposition \ref{prop:CfPB})  is
 essentially given by using  Equivalence 1 repeatedly for quotient groups of lower rank.

 For this reason we will  begin with the  cube $C_i$ of ring spectra
 constructed purely on isotropical principles,  and Equivalence 1. 
Since the  ring $G$-spectra at the vertices
of the  punctured cube $\PCi$ do not have the rigidity properties we
need, we will then  take the further step of reducing to the diagram
on the punctured cube $\PCf$.

For the rest of Sections \ref{sec:isotropiccube} and \ref{sec:formalcube} 
we  simpify notation and write $\Rti =\Rttop$.

\subsection{The isotropic cube}
We consider the coordinates $(a_0, a_1, \ldots , a_r)$
where each coordinate $a_i$ can take the value 0 or 1. 
For $0\leq c\leq r-1$ the $c$th
coordinate refers to connected subgroups of codimension $c$. 
The $r$th coordinate also refers to codimension $r$ (i.e., to finite
subgroups), but these must be treated differently, and in effect it 
refers to whether or not the ring is complete (roughly
speaking, whether it is $S^0$ or $D\efp$). Throughout the rest of Part
2, subgroups $H, K, L$ will be connected, and the disconnected
subgroups only enter through the final factor $D\efp$ and its
counterparts for quotient groups. 

To a first approximation, the idea is that the  cube is obtained by smashing together $r+1$ maps
of rings, with $S^0=A_i(0)\lra A_i(1)$ in the $i$th coordinate, so that 
$\Rti (a_0, \ldots, a_r)=\bigwedge_{i=0}^rA_i(a_i)$. However  we need
to refine this, so as to assemble information from individual
subgroups, and reflect containments of subgroups. 

The simplest coordinates are the $0$th and $r$th, where we have 
$A_0(1)=\siftyV{G}$ and  $A_r(1)=D\efp$. In the rank 1 case, this is
everything, so we obtain the usual diagram 
$$\diagram
S^0\sm S^0\rto\dto &\siftyV{G}\sm S^0\dto\\
S^0\sm D\efp\rto &\siftyV{G}\sm D\efp.
\enddiagram$$
To get a diagram of commutative rings, smashing with $\siftyV{G}$ is
replaced by localization with respect to it. 

Supposing $r\geq 2$ we now move on to the other coordinates. For $1\leq i\leq r-1$, for each
connected subgroup $K$, we take
$$A^K(1)=\siftyV{K}, $$
and then 
$$A_i(1 )=\prod_{\codim (K)=i}A^K(1)=\prod_{\codim (K)=i}\siftyV{K}.$$
The formula for codimensions $0$ and $r$ fits the same pattern, although there is only one term in the
product and containment imposes no restrictions. To make the formulae
typographically manageable  we need to introduce some more
notation. Indeed, in the $i$th spot we need to have index sets 
$I(i,0)$ and $I(i,1)$ for certain products. The index set $I(i,0)$ is
a singleton and 
\begin{equation}
\label{eqn.indices}
I(i,1)=\{ H \st H \mbox{ is connected and } \codim (H)=i\}. 
\end{equation}
Now we can define the ring spectrum to be placed at the $(a_r, \ldots,
a_0)$ vertex.  We recall that $\siftyV{H}\smb A$ denotes the Bousfield
localization of a $E_{\infty}$-ring spectrum $A$ with respect to
$\siftyV{H}$, and that this has the homotopy type of the ordinary
smash product $\siftyV{H}\sm A$. The functor $S^0\smb $ is the
identity. 

\begin{multline*}
\Rti (a_0, \ldots , a_r)=A_0(a_0)\smb \prod_{H_{1}\in I(1,a_{1})} 
\big[ A^{H_{1}}(a_{1})
\smb \prod_{H_{2}\in I(2,a_{2}), H_2\subset H_{<2}} 
\big[ A^{H_{2}}(a_{2}) \smb \cdots \\
\cdots \smb \prod_{H_{r-1}\in I(r-1,a_{r-1}), H_{r-1}\subset H_{<r-1}} 
\big[ A^{H_{r-1}}(a_{r-1})\smb A_r(a_r)\big] \cdots \big]\big]
\end{multline*}

\begin{remark}
(a) To help parse this, note that in the $s$th term we have  $S^0$ if $a_s=0$ and
otherwise it is the product of copies of $\sifty{H}$ as $H$ runs
through codimension $s$ subgroups contained in the earlier
subgroups (the notation $H_s\subset H_{<s}$ allows for the fact that
only terms with $a_t=1$ correspond to actual subgroups). 

(b) Note that the products include everything to the right of
them so the ordering of the vertices is important.  From now on, we
will often omit parentheses, relying on the `products include
everything to the right' convention to simplify typography. 
 
(c) This notation shows all structure maps clearly, but the formula
is easier to digest if   we pick out just those indices with $a_i\neq
0$, say $i_{c_0}<i_{c_1}<\ldots <i_{c_s}$. In this case 
if $c_s<r$ we have 
\begin{multline*}
\Rti (a_0, \ldots , a_r)=
\prod_{\codim H_{0}=c_0}  \big[ \sifty{H_0} \smb 
\prod_{\codim H_{1}=c_1, 
H_1\subset H_0}  \big[ \sifty{H_1} \smb \\
\prod_{\codim H_{2}=c_2, H_2\subset H_1}  \big[ \sifty{H_2} \smb 
\cdots 
\smb \prod_{\codim H_{s}=c_s, H_s\subset H_{s-1}}
\big[\sifty{H_s}\big]\cdots \big]
\big] \big] 
\end{multline*}
and if $c_s=r$ we have 
\begin{multline*}
\Rti (a_0, \ldots , a_r)=
\prod_{\codim H_{0}=c_0}  \big[ \sifty{H_0} \smb 
\prod_{\codim H_{1}=c_1, H_1\subset H_0}  \big[ \sifty{H_1} \smb \\
\prod_{\codim H_{2}=c_2, H_2\subset H_1}  \big[ \sifty{H_2} \smb 
\cdots 
\smb \prod_{\codim H_{s-1}=c_{s-1}, H_s\subset H_{s-2}} \big[ \sifty{H_{s-1}} \smb
D\efp \big] \cdots \big]
\big] \big] 
\end{multline*}

\end{remark}

The notation somewhat obscures the simplicity of this construction.
Thus in rank 2, we have
$$\tilde{R}(a_0,0, a_2)=A_0(a_0)\smb S^0\smb A_2(a_2)$$
and
$$\tilde{R}(a_0,1, a_2)=A_0(a_0)\smb  \prod_{\dim(H)=1}\big[ A^H(1) \smb A_2(a_2)\big] .$$
It is worth writing the diagram completely in this case. The layout is 

$$\diagram
&(010)\rrto \ddto&&(110)\ddto\\
(000)\rrto \ddto \urto&& (100) \ddto \urto&\\
&(011)\rrto &&(111)\\
(001)\rrto  \urto&& (101) \urto&
\enddiagram$$
and the diagram of ring spectra is as follows: 
$$\diagram
&\prod_H\siftyV{H}\rrto \ddto&&\siftyV{G}\smb \prod_H\siftyV{H}\ddto\\
S^0\rrto \ddto \urto&& \siftyV{G} \ddto \urto&\\
&\prod_H\siftyV{H}\smb D\efp \rrto &&\siftyV{G}\smb \prod_H\siftyV{H}\smb D\efp\\
D\efp\rrto  \urto&& \siftyV{G} \smb D\efp\urto&
\enddiagram$$

One $r$ dimensional face will play a preferred  role in our proof that
this cube is a homotopy pullback, so we give a
special name to the $a_0=0$ face (the left hand face in the above
illustration). The  $r$-cube diagram $R'$ is defined by  
$$R'(a_{1}, \ldots, a_{r})=R (0,a_{1}, \ldots, a_{r}). $$
We note that 
$$R= (S^0\lra \siftyV{G})\smb R'.$$ 
This notation will be even more convenient when we  refine the filtration $S^0\lra
\siftyV{G}$.

\subsection{Observations about isotropy}
It is natural to consider a filtration of all subgroups by dimension, so we let
$$\cF^{\leq i}=\{ H\st \dim(H)\leq i\} \mbox{ and }
\cC_{\geq i}=\{ H\st \dim(H)\geq i\}$$
The first is a family and the second is a cofamily. We also need to
consider the subgroups above and below a fixed group $K$:
$$\Lambda (K)=\{ H\st
H\subseteq K \}  \mbox{ and } V(K)=\{ H\st K\subseteq H \}   .$$
Again, the first is a family and the second is a cofamily.

 The point is that the category of spectra with geometric isotropy in the cofamiliy
 $V(K)$ of subgroups  (the spectra ``over $K$'') is equivalent to the 
category of $G/K$-spectra. To obtain good inductive arguments we want
to express the naturally occurring sets of isotropy in terms of those
of the form $V(K)$.

We are burdened with a standard notation in which the geometric
isotropy is given by 
$\GI (\widetilde{E}\cF)=All\setminus \cF$, so we adopt the convention that
for any cofamily $\cC$ 
$$X\cC := X\sm \widetilde{E}(All \setminus \cC), $$
so that 
$$\GI (X\cC)=\GI (X) \cap \cC$$ and in particular
$$\bbS \cC = \widetilde{E}(All \setminus \cC),  \mbox{ giving \ \ }  \GI
(\bbS \cC)=\cC. $$
This notation extends naturally to families, and indeed to any
collection of subgroups which can be expressed as an intersection between a family and a cofamily.

We abbreviate further, taking 
$$\fbbS{i}=\bbS \cC_{\geq i}$$
and consider the filtration
$$\bbS=\fbbS{0}\lra \fbbS{1}\lra \fbbS{2}\lra \cdots \lra
\fbbS{r}=\siftyV{G}.$$
More precisely we realise this filtration by cofibrations in the category of
commutative ring spectra by a process of localization; this is
possible by Proposition \ref{prop:axiom} (8). 

\begin{lemma}
\label{lem:PBisotropy}
For any map $f: X\lra Y $ of $G$-spectra which  is  which is an
equivalence in $\Phi^H$-fixed points for $H$ of dimension $i$ then
the square 
$$\diagram 
\fbbS{i}\sm X \rto\dto & \fbbS{i+1}\sm X\dto \\
\fbbS{i} \sm Y\rto &\fbbS{i+1}\sm Y 
\enddiagram$$
 is a homotopy pullback. If the map $X\lra Y$ is a map of commutative
ring spectra and $\sm$ is replaced by $\smb$ then this is a homotopy
pullback of commutative ring spectra. 
\end{lemma}

\begin{proof} The space $ \fbbS{i+1}/\fbbS{i}$  has geometric isotropy
concentrated on subgroups $H$ of dimension exactly $i$. This means it can be
built from cells $G/K_+$ where $K$ has dimension $\leq i$. 
\end{proof}

We will  apply this to a large number of slightly different maps, but
it is worth highlighting one which embodies the philosophy. 

\begin{cor}
\label{cor:PBisotropyprelim}
For any $G$-spectrum $X$, the square 
$$\diagram 
\fbbS{i}\sm X \rto\dto & \fbbS{i+1}\sm X\dto \\
\fbbS{i} \sm \prod_{\dim(H)=i} \siftyV{H} \sm X\rto &\fbbS{i+1}\sm
\prod_{\dim(H)=i}\siftyV{H}\sm X
\enddiagram$$
where the products are over {\em connected} subgroups of dimension
$i$,  is a homotopy pullback. If the map $X$ is a commutative
ring spectrum and $\sm$ is replaced by $\smb$ then this is a homotopy
pullback of commutative ring spectra. 
\end{cor}

\begin{remark}

(i) The bottom left hand entry is equivalent to
$ \prod_{\dim(H)=i}\siftyV{H}\sm X$ since all terms are $\cF^{\leq  i-1}$-contractible. 

(ii) The essence of the corollary is that we can start with  $\fbbS{r}
\sm X =\siftyV{G}\sm X$, and build $X=\fbbS{0}\sm X$ in steps.  At
each stage $\fbbS{i} \sm X$ can be constructed as a homotopy pullback
from $\fbbS{i+1} \sm X$ by using only spectra of the form
$\siftyV{H}\sm X$ for subgroups $H$ of dimension
$i$. 

Since the category of module $G$-spectra  over $\siftyV{H}$ is equivalent to the category of
$G/H$-spectra, this establishes an inductive scheme. 
\end{remark}

\begin{proof}[of  \ref{cor:PBisotropyprelim}] 
We apply Lemma \ref{lem:PBisotropy} to the map $X\lra \prod_{H}\siftyV{H}\sm
X$, so we need to verify this is a non-equivariant equivalence in geometric $K$-fixed
points for $K$ of dimension $\leq i$.

If $K$ is a subgroup of dimension less than $i$ then all terms are
$K$-contractible. If $K$ is of dimension $i$, there is precisely one
factor in the product which is not $K$-contractible (namely with $H$
the identity component of $K$), and $\Phi^K\siftyV{H}=S^0$, so 
that $X\lra  \siftyV{H}\sm X$ is an  equivalence after applying
$\Phi^K$. 
\end{proof}

The variant that we will apply is obtained by adapting a special case
of this corollary. 

\begin{cor}
\label{cor:PBisotropy}
For  $X=\Rti (a_0, \ldots, a_{i-1}, 0, a_{i+1}, \ldots , a_r)$ and 
$Y=\Rti (a_0, \ldots, a_{i-1}, 1, a_{i+1}, \ldots , a_r)$, the square
$$\diagram 
\fbbS{i}\smb X \rto\dto & \fbbS{i+1}\smb X\dto \\
\fbbS{i} \smb Y\rto &\fbbS{i+1}\smb Y
\enddiagram$$
is a homotopy pullback of commutative ring spectra.
\end{cor}

\begin{proof}  First note that if $a_s\neq 0$ for some $s<i$, this is
  immediate, since if $H$ is of dimension $s>i$ then  $\siftyV{H}$
  is $\cF^{\leq i}$-contractible. 

Now consider the case $a_0=\cdots =a_{i-1}=0$. This is very close to the special case of Corollary
\ref{cor:PBisotropyprelim} in which 
\begin{multline*}
X= \prod_{H_{i+1}\in I(i+1,a_{i+1})}
\big[ A^{H_{i+1}}(a_{i+1} )
\sm \prod_{H_{i+2}\in I(i+2,a_{i+2}),   H_{i+2}\subset H_{<i+2} } 
\big[ A^{H_{i+2}}(a_{i+2}) \sm \cdots \\
\cdots \sm \prod_{H_{r-1}\in I(r-1,a_{r-1}),  H_{r-1}\subset H_{<r-1}}  
\big[ A_{r-1}^{H_{r-1}}(a_{r-1})\sm A_r(a_r)\big] \cdots \big]\big]
\end{multline*}
This is the value of $X$ in the present corollary. The main difference
is that instead of taking $Y$ to be given as $\prod_H \siftyV{H} \sm
X$, the products in the $H$th factor are now restricted to subgroups
of $H$.

To see that this does not alter the fact that we have a pullback square, we need only observe that
the omitted factors in the products in the $H$th factor are
$\cF/H$-contractible. Indeed,  if $K$ is a connected subgroup
with $K\not \subseteq H$, and $\tH$ has identity subgroup $H$ 
then $K \not \subseteq \tH$ and so $\siftyV{K}$ is
$\tH$-contractible. 
\end{proof}

\subsection{The isotropic cube is a homotopy pullback}
We are ready to prove that the isotropic cube is a homotopy pullback. 

\begin{prop}
\label{prop:CiPB}
The $\Ci$-diagram $\Rti$ is a homotopy pullback, which is to say that the sphere spectrum  $\bbS$
is the homotopy pullback of $\tilde{R}$ restricted to the punctured cube
$\PCi$:
$$\bbS \simeq \holim_{v\in \PCi}\Rti (v).$$
\end{prop}

\begin{remark}
The corresponding statement is also true for the diagram in which the
products in the definition of the ring spectrum are over {\em all}
$\siftyV{H}$ with $H$ connected  of  a fixed codimension (the proof is the same,
except that one applies Corollary \ref{cor:PBisotropyprelim} instead
of Corollary \ref{cor:PBisotropy}). The reason for restricting to
products over decreasing flags is to obtain an algebraically tractable result. 
\end{remark}

\begin{proof}[of Proposition \ref{prop:CiPB}] 
Some readers may find it helpful to refer to the case
of Rank 2 made explicit in Subsection \ref{subsec:Cipbrank2} whilst reading
this proof. 

The method is to use a  succession of intermediate homotopy pullbacks
inside the cube.  We place
the terms of the intermediate homotopy pullbacks along the $a_0$ edges of $\PCi$.
It is helpful to describe first the basic filtration we are using.

The general reconstruction process works by enlarging the diagram 
to permit the $0$th coordinate to run through the entire filtration 
$$\bbS=\fbbS{0}\lra \fbbS{1}\lra \fbbS{2}\lra \cdots \lra
\fbbS{r}=\siftyV{G}.$$
We do this by letting $a_0$ take on the fractional values $0=0/r,
1/r, \ldots , r/r=1$ and take 
$$\Rti (i/r, a_{1}, \ldots, a_r)=\fbbS{i}\smb \Rti' (a_{1}, \ldots ,
a_{r});  $$
For brevity we write
$$\Rti'(i/r)=\fbbS{i}\smb \Rti'$$
for these  $r$-cube diagrams.

The idea is to imagine filling in the values of the diagram from scratch.  To start
with,  we are given the values at $\PCi$ (this includes all
entries with $a_0=1=r/r$). We then show successively for $a_0=(r-1)/r,
(r-2)/r, \ldots , 1/r, 0/r=0  $ that the entries in the diagrams of ring spectra
$\Rti'(a_0)$ can be  filled in  (using only homotopy equivalences and homotopy pullbacks)
from values already filled in. The only value of real importance  is
$\bbS=\fbbS{0} =\Rti (0, \ldots, 0)$, but it is easier to describe a uniform procedure
which fills in other entries on the way. 

At the start, we are given the ring spectra $\Rti (a_0, \ldots , a_r)$  for
vertices of $\PCi$. This means all vertices with  $a_r\in \{0,1\}$ and
not all entries $a_i$ zero. We observe first that the entries at many
other points are equivalent to these. 

\begin{lemma}
\label{lem:easyfillin}
Provided $a_j=1$ for some $j\leq i$ we have an equivalence
$$\fbbS{i}\smb \Rti'(a_{1}, \ldots , a_{r})\simeq \Rti'(a_{1},  \ldots
, a_{r})$$
of commutative ring spectra. 
\end{lemma}

\begin{proof}
The mapping cone of the comparison map is $E\cF^{\leq i-1}_+\sm 
\Rti'(a_{1},  \ldots , a_{r}).$ If $\dim(H)=j$ then $\siftyV{j}$ is
$\cF^{\leq i-1}$-contractible so the mapping cone is contractible.  
\end{proof}

Now suppose that the entries of $\Rti'((i+1)/r)$ are filled in. To fill in 
the entries of $\Rti'(i/r)$ with $a_i=1$  we use Lemma
\ref{lem:easyfillin}, and for the points with $a_i=0$ we  apply Corollary \ref{cor:PBisotropy},
with $X=\Rti'(a_{1}, \ldots, a_{i-1}, 0, a_{i+1}, \ldots , a_r)$ and 
$Y=\Rti'(a_{1}, \ldots, a_{i-1}, 1, a_{i+1}, \ldots , a_r)$. 
\end{proof}

\subsection{The case of rank 2}
\label{subsec:Cipbrank2}
The above inductive scheme is sufficiently complicated that it seems worth
making one case explicit. 

Consider the following diagram. 
$$
\resizebox{0.9\textwidth}{!}{    
\diagram
&\prod_H\siftyV{H}\rrto^{\simeq} \ddto  &&\etf \smb
\prod_H\siftyV{H}\rrto \ddto  &&\etp \smb \prod_H\siftyV{H}\ddto \\
S^0\rrto \ddto \urto &&\etf\rrto \ddto \urto &&\etp \ddto \urto &\\
&\prod_H D\efp \smb \siftyV{H}\rrto^{\simeq}  &&\etf \smb \prod_HD\efp \smb
\siftyV{H}\rrto  &&\etp \smb \prod_HD\efp \smb \siftyV{H}  \\
D\efp\rrto \urto &&\etf \smb D\efp\rrto  \urto &&\etp \smb D\efp \urto &
\enddiagram
}
$$

We have used traditional names $S^0=\fbbS{0}$, $\etf =\fbbS{1}$ and
$\etp =\fbbS{2}$, where $\cF$ is the family of finite subgroups and
$\cP$ is the family of proper subgroups.  
The zeroth coordinate is horizontal (left to right on the printed
page), the first coordinate is into the paper (diagonally on the printed page) and 
the $r$th coordinate is vertical (downwards on the printed page). 
The left hand square is $\Rti'(0/2)$, the central square is $\Rti'(1/2)$ and
the right hand square is $\Rti'(2/2)$. 

Thus the  left and right hand end (except for
$S^0$) are in the $\PCi$-diagram of which we want to identify the
homotopy limit. The back central entries can be filled in by the equivalences
illustrated on the two left hand horizontals without affecting the
homotopy limit. Now the top and bottom faces of the right hand cube are
homotopy pullbacks. This means that we can fill in the two central entries on the front
face without affecting the whole homotopy limit.
 Finally the front face of the left hand cube is a homotopy pullback,
 so that $S^0$ is the homotopy limit of the original $\PCi$-diagram.  

\section{The sphere as a formal  pullback}
\label{sec:formalcube}
We now move towards introducing  the formal cube. As described above,
we will define this by extending $\Ci$ to  a larger diagram $\Cif$ and
then finding $\Cf$ inside it. We briefly explain the motivation. 

The $\Ci$-diagram does not do what we require,  since the terms  $\siftyV{K}$ are
not formal unless $K=G$. However a strategy is already apparent from
our work on the isotropic cube in lower ranks. To see the idea, we may imagine
that we have already completed the proof for lower ranks, and
constructed the $G/K$-sphere $S^0$ 
from formal ring $G/K$-spectra $B$. Accordingly, we can construct $\siftyV{K}=\siftyV{K}\smb
S^0$ from formal $G$-spectra $\siftyV{K}\smb B$. Most of the
spectra $B$ that occur are products of those of the form
$\siftyV{H/K}$ (and since  $\siftyV{K}\smb \siftyV{H/K}\simeq
\siftyV{H}$ these correspond to ring spectra in our diagram) which have
already been constructed in lower rank. There is  only one other spectrum $B$, 
namely $D\efkp$,  and it is  the most important one. This outlines why 
 the sphere can be constructed from spectra of the form
$\siftyV{K}\smb D\efkp$, and we will give a detailed proof below. 

We will extend the $\Ci$-cube to a larger poset $\Cif$ also containing the
formal cube $\Cf$, and we will extend $\Rti$ to $\Cif$. Now the
$a_r=1$ face of the $\Ci$ cube is the $a_r=1$ cube of $\Cf$  and 
$\Rti$ already takes formal values on that face. The values of $\Rti$
on the $a_r=0$ face of $\Ci$ are not formal, and for each point we give
a new value at the corresponding point of $\Cf$. The new formal ring is obtained
by identifying the smallest codimension $c$ for which $\siftyV{K}$
(rather than $S^0$) occurs with  $\codim(K)=c$ and then smashing with 
$D\efkp$.  

We flesh out this sketch in the course of the next few subsections,
starting by describing the  larger diagram
$\Cif$ and then  identify $\Cf$  inside it. 

\subsection{A subdivision of the isotropic cube}
\label{subsec:RttoponCif} 

The diagram $\Cif$ is obtained from $\Ci$ by inserting  new layers in the  $a_r$
direction. It may be helpful to refer to the rank 2 pictures in
Subsection \ref{subsec:Cfpbrank2} whilst reading this account. 

Altogether we have $r+1$ layers placed at $a_r=i/r$ for
$i=0, 1,\ldots , r$, interpolating between  the $a_r=0$ and $a_r=1$
layers, which  are just as before. 

We will be using    maps to relate the various ring spectra
$D\efkp$ as $K$ varies. Indeed, $D\efkp$ is a commutative ring
$G/K$-spectrum by Proposition \ref{prop:axiom} (9) and
 if $L\subseteq K$ there is a map 
$$\infl_{G/K}^{G/L} D\efkp \lra D\eflp  $$
of ring $G/L$-spectra.  To see where this comes from, we observe that its adjunct
$$\eflp \sm \infl_{G/K}^{G/L}D\efkp \lra S^0$$
is obtained by composing the $G/L$-map $\eflp \lra \efkp$ with
evaluation. 

If we have any decreasing sequence
$$G = H_0\supseteq H_1 \supseteq \cdots \supseteq
H_{r-1}\supseteq H_r=1$$
of connected subgroups with $\codim (H_i)=i$, then, omitting notation for inflation,
we have a sequence of
maps of ring $G$-spectra
$$S^0=DE(\cF /G)_+ \lra D(E\cF /H_1)_+ \lra \ldots \lra D(E\cF /H_{r-1})_+ \lra D(E\cF /1)_+ =D\efp$$

To define the $\Cif$ diagram of rings we use the same formula as before
except that the range of values of $a_r$ is extended to the fractional
values and the $r$th entry becomes dependent on other coordinates. 
More briefly,  $A_r(a_r)$ is replaced by $A_r^{i_1, \ldots, i_r}(a_0,
\ldots , a_r)$.  
Thus, with $I(i,0)$ a singleton and  $I(i,1)=\{ H \st H \mbox{ is connected and } \codim (H)=i\} $
as before,  we define the ring
$G$-spectrum  to be placed at the $(a_0, \ldots, a_r)$ vertex:
\begin{multline*}
\Rti (a_0, \ldots , a_r)=A_0(a_0)\smb \prod_{H_{1}\in I(1,a_{1})} 
\big[ A^{H_{1}}(a_{1})
\smb \prod_{H_{{2}}\in I(2,a_{2}), H_2\subset H_{<2}} 
\big[ A^{H_{2}}(a_{2}) \smb \cdots \\
\cdots \smb \prod_{H_{r-1}\in I(r-1,a_{r-1}), H_{r-1}\subset H_{<r-1}} 
\big[ A^{H_{r-1}}(a_{r-1},)\smb A_r^{i_0, \ldots,
  i_r}(a_0, \cdots, a_r)\big] \cdots \big] \big]
\end{multline*}
For the last term, we take
$$A_r^{i_1, \ldots, i_r}(a_0, \cdots, a_r)=\infl_{G/H}^GDE\cF/H_+$$
where the subgroup $H=H(i_1, \ldots, i_r; a_0, \ldots,  a_r)$ is determined as
follows. When $a_r=s/r$, we consider the sequence $a_0, \ldots , a_s$;
if it is zero we take $H =H_{i_0} = G$, and otherwise we  find the last nonzero term $a_t$ and 
take the codimension $t$ subgroup $H_{{t}}$:
$$H(i_1, \ldots, i_r; a_0, \ldots , a_{r-1}, s/r):=H_{{\lnz (a_0,\ldots , a_s)}} $$
where 
$$\lnz (a_0, \ldots , a_s)=\max (\{ t \st a_t\neq 0\} \cup \{0\}). $$
Note that since 
$\lnz (a_0, \ldots, a_s)\leq \lnz (a_0, \ldots, a_s, a_{s+1})$
we have  an inclusion 
$$H(i_0, \ldots , i_{r-1};a_0, \ldots , a_{r-1}, s/r) \supseteq H(i_0, 
\ldots , i_{r-1};a_0, \ldots , a_{r-1}, (s+1)/r)$$
so that we do have the appropriate comparison maps. 

The diagram $\Cif$ is not a cube, so we should state explicitly that
the punctured diagram $P\Cif$ is obtained by omitting the $r$ points $(0, \ldots ,0, a_r)$ with
$a_r\neq 1$, which  are the points where $\Rti $ takes the value $\bbS$.

\subsection{Selecting the formal  cube}
The formal cube $\Cf$ consists of the $a_r=1$ face together with an
opposite face that we need to describe.  First, the initial vertex is
the point $(0, \ldots , 0)$. Next, the
point in the opposite face  corresponding to  a non-zero $(a_0,
\ldots , a_{r-1})$ can  be found by looking for the least value of $a_r$ 
for which the entry at $(a_0, \ldots, a_{r-1}, a_r)$ is formal. 
The formal entries in the diagram  are those with a term $D\efkp$ for some $K$, where we take this to  include
the terms $DE\cF/G_+=S^0$ when $a_0=1$. Thus the least value of $a_r$
with a formal entry is $a_r=\lnz (a_0, \ldots , a_{r-1})/r$. 
Continuing with  the convention that $\lnz (0,\ldots, 0)=0$, 
$$\Cf=\{ (a_0, \ldots, a_r)\st a_r=1 \mbox{ or } a_r = \frac{\lnz
  (a_0,\ldots, a_{r-1})}{r}\}. $$ 

As a poset,  these vertices form a cube. To see this,  we identify the vertex
$(a_0, \ldots , a_r)$ of $\Cf$ with the subset 
$$S (a_0, \ldots , a_r) =\{ i \st a_i=1\}.$$

To see that the morphisms correspond to containment of subsets (so that
$\Cf$ is a cube) we note that if  $(a_0, \ldots , a_{r-1}) $ and
$(b_0, \ldots , b_{r-1})$ differ only by changing some entries $a_i=0$
to $b_i=1$ (so $S(a)\subseteq S(b)$) then $a_r:=\lnz(a_0, \ldots , a_{r-1}) \leq \lnz(b_0, \ldots ,
b_{r-1})=:b_r$, so that there is a path from $ (a_0, \ldots, a_{r-1}, a_r)
$ to $(b_0, \ldots, b_{r-1}, b_r)$ in $\Cif$.

\begin{prop}
\label{prop:CfPB}
The inclusion $\Cf \subseteq \Cif$ induces an equivalence
$$\holim_{v\in \PCf} \Rti (v) \simeq \holim_{v\in \PCif} \Rti (v).$$
\end{prop}

Before proving Proposition \ref{prop:CfPB},  we note that with
Proposition \ref{prop:CiPB} and the fact that $\PCi$ is cofinal in $\PCif$,  
it implies that $\bbS$ is the homotopy pullback of the formal
ring spectra. 

\begin{cor}
\label{cor:SisPCfpb}
The $\Cf$-diagram $\Rti$ is a homotopy pullback, which is to say that
$\bbS$  is the homotopy pullback of the $\PCf$-diagram $\Rti$:
$$\bbS \simeq \holim_{v\in \PCf}\Rti (v).\qqed$$ 
\end{cor}

It then follows from Proposition~\ref{prop-gen-pb} that we have the
desired Quillen equivalence. For this statement we revert to the full notation $\Rttop =\Rti$.

\begin{cor}\label{cor.pullback}
There is a Quillen equivalence between equivariant $G$-spectra, modelled by the category of $\bbS$-modules, and the cellularization of the diagram-injective model structure on $\Rttop$-modules.
$$\Gspectra \simeq  \cellmodcatG{\Rttop} $$
\end{cor}

It remains to give the proof comparing the limits over $P\Cf$ and
$P\Cif$. \\[2ex]

\begin{proof}[of Proposition \ref{prop:CfPB}]
Some readers may find it helpful to refer to the case
of Rank 2 made explicit in Subsection \ref{subsec:Cfpbrank2} whilst reading
this proof. 

We will work in the diagram  $\Cif$ (i.e.,  permitting $a_r\in \{0/r,
1/r, \ldots , r/r \}$). In Subsection \ref{subsec:RttoponCif} we defined $\Rti (v)$ for all
vertices $v$. The proof here consists of  showing  how we could recover all of them
from the entries in $\PCf$ alone,  using homotopy pullbacks. This will
show in particular that the entry $\Rti (0, \ldots , 0)=\bbS$ at the
initial vertex is the homotopy pullback of the $\PCf$-diagram $\Rti$.

We view this as starting with an empty slate, adding the entries at 
points of $\PCf$ and steadily filling in the values at different
vertices by using homotopy pullbacks of entries filled in previously.

First, we fill in all the points of $\PCif$ which admit a map from 
 an entry of $\PCf$; this does not change the homotopy pullback, since
$\PCf$ remains cofinal. For example, since $(1, 0, \ldots, 0)$ is in
$\PCf$, we may fill in all vertices $(1, 0, \ldots , 0, a_r)$ with
$a_r\neq 1$, which  all  have value $\siftyV{G} \smb DE\cF/G_+ \simeq 
\siftyV{G}$.

The $\Cif$-diagram $\Rti$  takes the value $\bbS$ at  $(0,0, \ldots , 0,
a_r)$ for $a_r\neq 1$. The rest of the diagram is called $P\Cif$ and  has  $r+1$ initial 
points, namely the vertices $v_c=(0, \ldots , 0, 1, 0, \ldots, 0)$
(where the  $1$ in the $c$th position) for $0\leq c \leq r$.  The entries at
$v_r=(0, \ldots , 0, 1)$ (viz $D\efp$)   and $v_0=(1, 0, \ldots , 0)$ (viz
$\siftyV{G}$) lie in $\PCf$  and are therefore already filled in. The entry when $0<
c <r$  is $\siftyV{c}:=\prod_{\codim (H)=c}\siftyV{H}$, and we need to
explain how this is filled in by homotopy pullbacks. 

Note first that  $\siftyV{c}$ is also the entry at the points $(0, \ldots , 0, 1, 0,
 \ldots, a_r)$   for $a_r=0/r,1/r, \ldots 
 (c-1)/r$. The point with $a_r= c/r$ lies in $\PCf$, and the entry 
there is therefore  filled in at the start.  To fill in the
entry at the initial vertex $v_c=(0, \ldots 0, 1, 0, \ldots 0)$  we consider a
$(c+1)$-cube $\Cf (c)$ with initial vertex at $(0, \ldots , 0, 1, 0,
\ldots, 0)$. More precisely
$$\Cf (c)=\{ (a_0,a_{1}, \ldots, a_{c-1},1,
0, \ldots, 0, a_r)\st a_r =0 \mbox{ or } c/r\}. $$
We note that entries at $P\Cf (c)$ are already filled in, and the
following lemma  shows that the entry $\siftyV{c}$ can be filled in as
a homotopy pullback of entries on $P\Cf(c) $.

\begin{lemma}
\label{lem:CfcPB}
The $\Cf (c)$-diagram $\Rti$ is a homotopy pullback, which is to say that
$\siftyV{c}$ is the homotopy pullback of the $\PCf(c)$-diagram $\Rti$.  
\end{lemma}

\begin{proof}
The proof follows precisely the same pattern as Proposition
\ref{prop:CiPB} above. The cube is rather similar to a product of
copies of the isotropic pullback diagrams for the rank $c$
quotients, but it is slightly different, so we provide some reference
points for the proof. 

We first note that $\siftyV{c}=\fbbS{c}\smb \siftyV{c}$ and then 
filter the $0$th coordinate by 
$$\fbbS{r-c}\lra \fbbS{r-c+1}\lra \ldots \lra \fbbS{r}=\siftyV{G}. $$
We refine the map from $a_0=0$ to $a_0=1$ into $c$ steps. The
structure of  the proof is precisely like that of Proposition
\ref{prop:CiPB}. The only difference is that our application of 
Corollary \ref{cor:PBisotropy} is in the special case 
 $X=\Rti (a_0, \ldots, a_{i-1}, 0, a_{i+1}, \ldots , a_{c-1}, 1, 0,
\ldots , 0, a_r)$ and 
$Y=\Rti (a_0, \ldots, a_{i-1}, 1, a_{i+1}, \ldots , a_{c-1}, 1, 0,
\ldots , 0, a_r)$. 
\end{proof}

Since we have now filled in the initial points of $P\Cif$, we may fill in the remaining vertices without changing
the homotopy pullback. Accordingly the homotopy pullback over $\PCf$
agrees with that over $\PCif$ as required. 
\end{proof}

\subsection{The case of rank 2} 
\label{subsec:Cfpbrank2}
The above account is again sufficiently complicated that it is worth
making one case explicit. For typographical reasons we have only
illustrated the case  $r=2$, though in fact some features only appear
at rank 3. As before, we have used traditional names $S^0=\fbbS{0}$, $\etf =\fbbS{1}$ and
$\etp =\fbbS{2}$, where $\cF$ is the family of finite subgroups and
$\cP$ is the family of proper subgroups.

Consider the diagram 
$$\diagram
&\prod_H\siftyV{H}\rrto \ddto &&\etp \smb \prod_H\siftyV{H} \ddto\\
S^0\rrto \urto \ddto&& \etp \urto\ddto\\
&\prod_H\siftyV{H}\smb D\efhp \rrto \ddto &&\etp \smb \prod_H \siftyV{H}
\smb D\efhp \ddto\\
S^0\rrto \urto \ddto&& \etp \urto\ddto\\
&\prod_H\siftyV{H}\smb D\efp \rrto &&\etp \smb \prod_H \siftyV{H}
\smb D\efp \\
D\efp\rrto \urto && \etp \smb D\efp \urto
\enddiagram$$
The whole diagram is $\Cif$.  The top square has $a_2=0/2$ the middle
square has $a_2=1/2$ and the  bottom square has  $a_2=2/2$. The cube
$\Cf$ consists of the bottom square, the middle horizontal on the back
face and the top front edge.

Wiping the slate clean, and starting with the entries in $P\Cf$ we
describe how to fill in the other entries.  First, we may fill in  $\etp \smb \prod_H\siftyV{H}$ at the top right back position
without changing the homotopy pullback since it admits a map from
$\etp$ at the top right front.  Now Lemma \ref{lem:CfcPB} with $c=1$ states that the top back square is a
homotopy pullback so that we have filled in  $\prod_H\siftyV{H}$ at the top,
back left. This gives all vertices of $\PCi$  from those of $\PCf$, and
$S^0$ is the homotopy pullback of $\PCi$ by Proposition
\ref{prop:CiPB}.

\subsection{Diagrams }
Now that we have a $\PCf$-diagram $\Rttop$ of ring $G$-spectra we
should explicitly introduce the corresponding diagrams in other
contexts. 

\begin{defn}
\label{defn:PCfdiagrams}
From the $\PCf$ diagram $\Rttop$ of commutative ring $G$-spectra we
form
\begin{enumerate}
\item the $\PCf$ diagram $\Rtop =(\Rttop)^G$ of commutative ring
  spectra,
\item  the $\PCf$ diagram $\Rt$ of commutative DGAs obtained from
  $\Rtop$ using the fact \cite{s-alg} that the category of commutative
  $H\Q$-algebras is equivalent to commutative DGAs over $\Q$ (see Section
  \ref{sec:spectratoDGAs}), 
\item the $\PCf$ diagram $\Ra =\pi^G_*(\Rttop)=\pi_*(\Rtop)=H_*(\Rt)$
  of graded rings. 
\end{enumerate}

\end{defn}

\part{From $G$-spectra, through spectra to algebra}

\section{Fixed point equivalences for module categories}
\label{sec:removeequiv}

The category of $G$-spectra is modelled by $\bbS$-modules in
$G$-spectra, and since $\bbS$ is a homotopy pullback of the
$\PCf$-diagram $\Rttop$ of ring $G$-spectra, $G$-spectra is also
modelled by a category of $\Rttop$-modules in $G$-spectra.
Our next step is to remove equivariance and find a model in terms of a category of
non-equivariant module spectra over a $\PCf$-diagram of
non-equivariant ring spectra.  

\subsection{The fixed point adjunction for module spectra}
We briefly recall some results of \cite{modulefps} for an individual
ring $G$-spectrum. 

The context is that when we are given a fibrant  ring $G$-spectrum, $\Ati$ with
fixed point spectrum $A=\Ati^G$ there is a Quillen adjoint pair

$$\RLadjunction{\Psi^G}{\AtimodGspectra}{\Amodspectra}{\einfl_1^G} .$$
Here $\Psi^G$ takes Lewis-May fixed points and then uses the fact
that  the fixed point functor is lax monoidal by Proposition \ref{prop:axiom} (10) to view the result as a
module over $A$. The inflation functor views a non-equivariant
spectrum as a $G$-spectrum by pullback along the quotient and then
extends scalars along $\infl A\lra \Ati$ to give an $\Ati$-module. The
tilde on $\infl_1^G$ refers to this extension of scalars (this was
omitted in \cite{modulefps}). 

\begin{remark}
We note that in \cite {modulefps} we worked with orthogonal
 spectra, but we may compose that Quillen adjunction with the Quillen
 equivalence between orthogonal spectra and orthogonal $\mcL$-spectra
 noting that the two adjunctions have the same direction and therefore
 give a single Quillen pair to which the discussion of
 \cite{modulefps} applies without change.

In \cite{modulefps} we did not discuss monoidal
structures on the module categories, since we weren't assuming the
 rings were commutative. However we remark here that the Quillen pair
 is monoidal. Indeed, we are composing (1) the adjunction
 between orthogonal spectra and orthogonal $\mcL$-spectra (2)  the  fixed
 point-inflation adjunction and (3) a change of rings adjunction all
 of which are weak symmetric monoidal Quillen adjunctions.   
\end{remark}

Since the category $\Amodspectra$ is generated by $A$, the Cellularization
Principle gives a Quillen equivalence
$$\AticellAtimodGspectra \simeq \Amodspectra.$$
Surprisingly often (in particular \cite[4.4]{modulefps} when $G$ is a torus and $A$ has
Thom isomorphisms), the category $\AtimodGspectra$ is generated by $\Ati$, so that 
we obtain a Quillen equivalence
$$\AtimodGspectra\simeq \Amodspectra$$
showing that a category of equivariant module spectra is equivalent to
a category of non-equivariant module spectra.

Before turning to our applications it will be helpful to mention three
special cases. 

\begin{example} {\em (Eilenberg-Moore Theorem \cite[8.1]{modulefps})}
We take $\Ati =\DH EG_+$, so that $A =\DH BG_+$ and obtain a version of the 
Eilenberg-Moore theorem:
when $G$ is a torus, there is a Quillen equivalence
$${\DEGmod}\simeq {\DBGmod}.  $$ 

We emphasize that no cellularization is necessary here for a torus. 
\end{example}

\begin{example} {\em (Spectra over $G$ \cite[VI.5.3]{mm},
    \cite[3.3]{modulefps}; no rationalization is necessary})
We take $\Ati =\siftyV{G}$ so that $A=S^0$ and note that the category modules over
$\siftyV{G}$ is a model for spectra over $G$ (i.e., for spectra with
geometric isotropy in $\{G\}$), whilst the category of $S^0$-modules
is the category of spectra. Thus we recover the well
known result that there is a Quillen equivalence
$$ {\mbox{$G$-spectra/$G$}}\simeq {\mbox{spectra}}.$$
\end{example}

The variant of the first example with all finite isotropy collected
together is directly relevant to us. 
\begin{example} {\em (Almost free spectra \cite[Corollary 9.2]{modulefps})}
Continuing with $G$ a torus and taking $\Ati =D\efkp$ we obtain 
$$\mbox{$D\efkp$-mod-$G/K$-spectra}\simeq 
\mbox{$D(\efkp)^{G/K}$-mod-spectra}.$$
 \end{example}

\subsection{Fixed point adjunctions for diagrams of ring $G$-spectra}

We now move to the case of {\em diagrams} of ring spectra. Suppose  $\Rti$
is a diagram of ring $G$-spectra, fibrant in the diagram-injective
model structure and consider the corresponding diagram $R=\Rti^G$ of
spectra where fixed points are applied objectwise. We may again consider the
diagram-injective model categories of $\Rti$-module $G$-spectra and
$R$-module spectra and once again form the Quillen pair
$$\RLadjunction{\Psi^G}{\RtimodGspectra}{\Rmodspectra}{\einfl_1^G} . $$

\begin{lemma}
\label{lem:Qequivobjectwise}
The Quillen adjunction on diagrams with the diagram-injective model
structure is a Quillen equivalence provided it is a Quillen
equivalence objectwise. 
\end{lemma}

\begin{proof}
We note that unit and counit when evaluated at any vertex give the unit and counit of the adjunction for
a single ring $G$-spectrum. We claim this is also true for the derived
unit and counit. Since weak equivalences are detected
objectwise, this will suffice. 

To see that the statement about the derived unit and counit follows
from that about the underived ones, we need to consider fibrant and
cofibrant replacement.  For fibrant replacement the implication is
clear  since fibrancy is defined objectwise. For cofibrant replacement, we note that
cofibrant diagrams are objectwise cofibrant. Finally, weak
equivalences of equivariant orthogonal spectra are defined in terms of homotopy
groups of fixed points so in the light of Proposition \ref{prop:axiom} (2), 
 fixed points  preserve all weak equivalences. It follows that the derived unit
and counit of the Quillen pair on diagram categories are objectwise 
the derived unit and counit. 
\end{proof}

 \subsection{The fixed point adjunction for $\Rttop$.}
 We consider the special case $\Rti=\Rttop$ of the above discussion. 
The category of spectra is generated by the cells $G/H_+$ as $H$
varies over closed subgroups of $G$ and the
cellularization in the following statement is  with respect to the
images of these  generating cells.

\begin{thm}
\label{thm:RtopmodisRtildetopmod}
There is a Quillen equivalence
$$\RLadjunction {\Psi^G}
{\Rttopmod}
{\mbox{$\Rtop$-mod-spectra}}
{\einfl_1^G}. $$
It follows by cellularizing both categories that there is a Quillen equivalence
$$\RLadjunction {\Psi^G}
{\cellRttopmod}
{\cellRtopmod}
{\einfl_1^G}. $$

\end{thm}

\begin{proof}[of \ref{thm:RtopmodisRtildetopmod}]
Without changing notation, we take the fibrant replacement of 
$\Rttop$ in the diagram-injective model category 
of $\PCf$-diagrams of commutative ring $G$-spectra~\cite[5.1.3]{hovey-model}.   
By \cite[Lemma 4.2]{diagrammodcats} the category of modules over this fibrant replacement
is Quillen equivalent to the original category $\Rttopmod$.   

By Lemma \ref{lem:Qequivobjectwise}  it suffices to deal with the individual $G$-spectra
at a particular vertex $v$ of $\PCf$, so we take $\Ati =\Rti (v)$ for
some vertex $v$. 

For any ring $G$-spectrum $\Ati$  we get the equivalence 
$$\mbox{$\Ati$-cell-$\Ati$-mod-$G$-spectra}\simeq
\mbox{$A$-cell-$A$-mod-spectra}.$$
It is clear that $A$ generates the category of $A$-modules so that the
$A$-cellularization on the right is a Quillen equivalence. It remains only to
show that the cellularization on the left has no effect. 
  
To establish that the $\Ati$-cellularization on the left is also a
Quillen equivalence, it suffices to show that $\Ati$ generates
the category of $\Ati$-modules. The argument (as in \cite[4.4]{modulefps})
is to show that cells $G/H_+$ are all built from complex representation
spheres. 

If  $\Ati$ has Thom isomorphisms this is exactly as in 
\cite[4.4]{modulefps}, but we need the slightly more general argument
from  \cite[Section 9]{modulefps}.  We will show that for each complex representation $W$
we may express $\Ati$ as a finite  product $\Ati\simeq \prod_i \Ati_i$  of factors $\Ati_i$ 
so that $\Ati_i \sm S^W$ is a $G$-fixed suspension of $\Ati_i$. This
will show that 
$\Ati \sm S^W$ is in the thick category generated by $\Ati$ as
required. 

Now, turning to the proof,  $\Ati =\Rttop (v)$ and suppose that the last non-zero entry of $v$
is of codimension $c$. Then $\Ati$  takes the form
$$\Ati= \prod_{\codim H_0=c_0}\siftyV{H_0}\smb \prod_{\codim H_1=c_1}
\cdots  
\prod_{\codim H=c} \siftyV{H}\smb DE\cF/H_+$$
Furthermore $D\efhp\simeq \prod_{\tilde{H}}DE\lr{\tilde{H}}$, where
the product is indexed by closed subgroups $\tilde{H}$ with identity
component $H$.  
First note that  $S^W$ admits the structure of a finite $G$-CW complex, and therefore can be
moved inside all the products. For each $H$, we have  $W=W^H\oplus
W'(H)$ and $\siftyV{H}\sm S^W\simeq \siftyV{H}\sm S^{W^H}$ so that 
 $$
\Ati \sm S^V \simeq  
 \prod_{\codim H_0=c_0}\siftyV{H_0}\smb \prod_{\codim H_1=c_1}
\cdots  
\prod_{\codim H=c} \siftyV{H}\smb DE\cF/H_+\smb S^{V^H}$$
Now if $\tilde{H}$ has identity component $H$,  we use the Thom
isomorphism for Borel cohomology of $H$-fixed points \cite[8.1]{tnq1}
to give an equivalence
$$DE\lr{\tilde H} \sm S^{V^H}\simeq DE\lr{\tilde H} \sm
S^{|V^{\tilde{H}}|}.$$
Collecting together all the factors with the same suspension: 
$$\Sigma_i=\{\tilde{H} \st \codim (\tilde{H})=c \mbox{ and } \dim
  (V^{\tilde{H}})=i\}$$
we obtain a decomposition   $\Ati \simeq \prod_i \Ati_i$ as required. 
\end{proof}

\subsection{Modules over product rings}
We are repeatedly working with infinite products $R=\prod_i R_i$ of ring
spectra $R_i$, and we let $e_i$ be the idempotent projecting onto the
$i$th factor.  Even in algebra, such infinite products are poorly
behaved (for example infinite products of Noetherian rings need not be
Noetherian). If $M$ is a module over $\prod_iR_i$ and we take
$M_i=e_iM$ then we have maps
$$\bigoplus_i M_i \lra M \lra \prod_i M_i .$$
The first is a monomorphism, but typically neither will be an
isomorphism (for example if we take $M=\prod_iR_i/\bigoplus_iR_i$ then
$M_i=0$ for all $i$).

It seems worth observing that from the point of view of model
categories we may rather generally apply the Cellularization Principle \cite{cellprin} 
to recover the more familiar product of module  categories from the category of
modules over the product ring by suitable
cellularization. 

\begin{lemma}
We have a Quillen equivalence
$$\mbox{$\{ R_s\}_s$-cell-$\left( \prod_sR_s\right)$-modules} \simeq 
\prod_i\left[  \mbox{$R_i$-modules} \right]. $$
\end{lemma}

\begin{proof}
For each $s$ we have the projection $\pi_s: R \lra R_s$ inducing a
restriction on module categories. This has both a left and a right
adjoint, and the natural map from the extension of scalars to
coextension of scalars is an isomorphism (the idempotent subobject
agrees with the idempotent quotient object). Combining these we obtain 
$$ p:\mbox{$R$-mod}\lra \prod_s\left[\mbox{$R_s$-modules}\right]$$
whose right adjoint $p^R$ takes the product of the terms and whose left
adjoint $p^L$ takes the sum. 

The adjoint pair $(p^L,p)$ is a Quillen pair if the categories have
the injective model structures. The adjoint pair $(p,p^R)$ is a
Quillen pair if the categories are given the projective model structures.

In the second case, the objects $R_s$ are small generators in
$\prod_s\left[\mbox{$R_s$-modules}\right]. $
Since both $p$ and $p^R$ preserve all weak equivalences, the unit and
counit are equivalences on the generators $R_s$ and 
we may apply the Cellularization Principle to  give the desired
conclusion. 
\end{proof}

\section{From spectra to DGAs}
\label{sec:spectratoDGAs}

In this section we observe that the results from~\cite{s-alg}
show very directly that the category 
of module spectra over the diagram $\Rtop$
of commutative ring spectra is Quillen equivalent to a category of differential
graded modules over a diagram $\Rt$ of commutative DGAs.   It then follows
that the cellularizations of these model categories are also Quillen equivalent.
Since \cite{s-alg} is based on symmetric spectra, we use Proposition \ref{prop:axiom} (12) to show that there is a Quillen equiavlence between the respective categories of modules over $\Rtop$ and $\FF\Rtop$.

We next apply the functors from~\cite{s-alg} to move from symmetric spectra to differential graded modules.
In more detail, in~\cite[1.1]{s-alg} a composite functor $\Theta$ is defined which produces a Quillen equivalence between $H\bZ$-algebra spectra 
and DGAs. 
Given an $H\bZ$-algebra spectrum, 
$B$, 
it is shown in~\cite[2.15]{s-alg} that the category of module spectra
over $B$ is Quillen equivalent to the category of differential graded
modules over a DGA $\Theta B$.  
Furthermore, rationally there is a second functor $\Theta'$
which is symmetric monoidal, so that  it takes rational commutative rings
spectra to rational commutative DGAs. Finally, over the rationals the two
functors are naturally equivalent, so that by \cite[1.2]{s-alg},  
if $B$  is a commutative $H\bQ$-algebra then
$\Theta B$ is naturally weakly equivalent to the commutative DGA $\Theta' B$.

\begin{defn}
\label{defn:Rt}
Applying functors to the  $\PCf$-diagram of commutative rational ring spectra
$\Rtop$, we define $\Rt$ to be the  $\PCf$-diagram $\Theta' (H\bQ \sm \FF\Rtop)$ of 
commutative DGAs. 
\end{defn}

 Note, throughout this section we are implicitly considering the standard (diagram projective)
 model structures {from~\cite[3.1(i)]{diagrammodcats} on modules over diagrams of rings.}

\begin{prop}  \label{prop-alg-qe}
There is a zig-zag of Quillen equivalences
$$\Rtopmod\simeq_Q \Rtmod$$
 between the category of module spectra $\Rtopmod$ 
and the category of differential graded modules $\Rtmod$.
\end{prop}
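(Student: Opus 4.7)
The plan is to concatenate termwise applications of the single-ring comparison theorems of \cite{mmss} and \cite{s-alg}; the crucial point is that \cite{s-alg} formulates its Quillen equivalences already in the generality of rings with many objects, so passing to $\LI(G)$-diagrams is formal. With the diagram-projective model structure on each module category, a Quillen equivalence of ring diagrams inducing an objectwise Quillen equivalence on modules lifts immediately to a Quillen equivalence of the diagram module categories.

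The concrete zig-zag proceeds in four steps. First I would apply the functor $\bbU$ from orthogonal to symmetric spectra; by \cite[0.6]{mmss} extended to the many-object setting this gives a Quillen equivalence $\Rtopmod \Qeq \modcat{\bbU\Rtop}$. Second, since $\Rtop$ is already rational, extension of scalars along $\bS \to H\bQ$ gives a further Quillen equivalence $\modcat{\bbU\Rtop} \Qeq \modcat{H\bQ \sm \bbU\Rtop}$, landing in modules over a diagram of commutative $H\bQ$-algebras. Third, applying \cite[2.15]{s-alg} to this $H\bQ$-algebra diagram (regarded as an $H\bZ$-algebra with many objects) yields a Quillen equivalence $\modcat{H\bQ \sm \bbU\Rtop} \Qeq \modcat{\Theta(H\bQ \sm \bbU\Rtop)}$ with a diagram of DGAs. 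Finally, \cite[1.2]{s-alg} provides a natural zig-zag of quasi-isomorphisms of DGAs between $\Theta B$ and $\Theta' B$ for any commutative $H\bQ$-algebra $B$; applied objectwise, this produces a zig-zag of weak equivalences of DGA diagrams between $\Theta(H\bQ \sm \bbU\Rtop)$ and $\Rt = \Theta'(H\bQ \sm \bbU\Rtop)$, and extension and restriction of scalars along these weak equivalences (Lemma~\ref{lem.ext.scalars}) supplies the last leg, identifying the module category with $\Rtmod$.

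The main point to check, rather than a genuine obstacle, is that each single-ring Quillen equivalence lifts cleanly to the diagram setting and that the commutativity and splitting data on $\Rtop$ survive the zig-zag. The first is essentially automatic: all the functors $\bbU$, $H\bQ \sm (-)$, $\Theta$, and $\Theta'$ are covariantly functorial and respect the indexing category $\LI(G)$, and the diagram-projective model structures detect weak equivalences and fibrations objectwise. The second is automatic by the same functoriality, so the composite zig-zag of Quillen equivalences $\Rtopmod \Qeq \Rtmod$ assembles as claimed, and will induce the desired Quillen equivalence on cellularizations once the images of the cells are identified in Section~\ref{sec:rigidity}.
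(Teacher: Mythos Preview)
Your proposal is correct and follows essentially the same four-step zig-zag as the paper: pass from orthogonal to symmetric spectra via $\bbU$ \cite[0.6]{mmss}, smash with $H\bQ$ using rationality, apply \cite[2.15]{s-alg} to reach $\Theta$-modules, and then use the natural weak equivalence between $\Theta$ and $\Theta'$ from \cite[1.2]{s-alg} together with extension/restriction of scalars (the paper cites \cite[A.1.1]{ss2} rather than Lemma~\ref{lem.ext.scalars} for this last step, but the content is the same). Your additional remarks about functoriality in the indexing category and the diagram-projective model structure are implicit in the paper's use of the many-object formalism from \cite{ss2}.
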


\begin{proof}
As mentioned above, the first step is a Quillen equivalence between $\Rtopmod$ over 
$1$-spectra and $\modcat{\FF\Rtop}$ over symmetric spectra by
Proposition \ref{prop:axiom} (12) extended to diagrams of rings.  Since $\Rtop$
is rational, the unit map $\FF\Rtop \to H\bQ \sm \FF\Rtop$ is a weak equivalence 
which induces a Quillen equivalence on the associated module categories by
extension and restriction of scalars, {~\cite[4.2]{diagrammodcats} and~\cite[5.4.5]{hss}.}

Combining these steps with \cite[2.15]{s-alg} 
produces a Quillen equivalence between $\Rtopmod$ and 
$\modcat{\Theta (H\bQ \sm \FF\Rtop)}$.  Since $H\bQ\sm \FF\Rtop$ is a diagram of
commutative $H\bQ$-algebras, it follows from the proof of \cite[1.2]{s-alg} that $\Theta' (H\bQ\sm\FF\Rtop)$
is a diagram of commutative rational DGAs  which is weakly equivalent to the
diagram $\Theta (H\bQ\sm\FF\Rtop)$.  

{By~\cite[4.2]{diagrammodcats} and~\cite[5.4.5]{hss},}
extension and restriction of scalars
over these weak equivalences produce the last steps in the stated zig-zag of
Quillen equivalences. 
\end{proof}

The Cellularization Principle, \cite[Corollary 2.8]{cellprin} shows that cellularization preserves
zig-zags of Quillen equivalences as long as the cells in the target
category are taken to be the images under the relevant derived
functors of the cells in the 
source category. Here we begin with the cellularization of $\Rtopmod$ with respect to 
the images of $G/H_+$ as $H$ runs through closed subgroups.   
Then, at each of the next steps, the cells are the images of $G/H_{+}$ under the appropriate derived functor.  

\begin{cor}
There is a zig-zag of Quillen equivalences 
$$\cellRtopmod \simeq_Q \cellmodcatGG{\Rt}$$
between the cellularizations of the model categories
in Proposition \ref{prop-alg-qe}. 
\end{cor}

\section{Formality}
\label{sec:formality}
We have shown that the category of rational $G$-spectra is equivalent
to the cellularization of modules over a suitable $\PCf$ diagram of
commutative DGAs. On the other hand, we know very little about the
diagram except its homology and that the terms are commutative. The
purpose of this section is to show that this is enough to determine the diagram up to equivalence. 

\subsection{Terminology}

A map $f: \Rtilde \lra \Rtilde'$ of commutative DGAs inducing an
isomorphism in homology is
called a  {\em homology isomorphism}. Two commutative DGAs related by a zig-zag
of homology isomorphisms of commutative DGAs are said to be {\em
  quasi-isomorphic}.

A commutative DGA which is quasi-isomorphic to its homology is said to be {\em formal}. A
graded commutative ring $R$ is said to be {\em intrinsically formal}
if every  commutative DGA $\Rtilde$ with $H_*(\Rtilde)\cong R$ is
formal.  We say that $\Rtilde$ is {\em strongly formal} if there is a homology
isomorphism $H_*(\Rtilde)\lra \Rtilde$.  A commutative graded ring
is {\em strongly intrinsically formal} if every commutative DGA with
homology $R$ is strongly formal. 

All of these notions apply similarly to diagrams of commutative DGAs,
and it is our purpose to show that the $\PCf$-diagram $\Ra=\pi^G_*(\Rttop)$ is intrinsically
formal. This is based on the fact that polynomial rings are strongly
intrinsically formal amongst commutative rings. This single fact is
extended in generality in both the algebraic and diagrammatic senses. 

\subsection{Constructing new formal objects from old}
The general form of the results is not surprising, but care is
necessary in their formulation.

\begin{lemma}
\label{lem:algformal}

\begin{enumerate}[(i)]
\item For any commutative ring $k$, the $k$-algebra $k[x_1, \ldots ,
  x_r]$ on even degree generators is strongly intrinsically formal
amongst commutative DG $k$-algebras.  
\item If $R_i$ is  intrinsically formal for all $i$ then $\prod_i R_i$ is
  intrinsically formal. 
\item If $R$ is strongly intrinsically formal and $\cE$ is a multiplicatively
  closed subset of $R$ then $\cEi R$ is intrinsically formal relative
  to $R$ in the sense that if $\Rtilde \lra \Rtilde_{\cEi}$ is a map
  of DGAs inducing $R\lra \cEi R$ in homology, then there exists a
 homology isomorphism $ \Rtilde_{\cE^{-1}}  \to \tilde{R}_{\cEi}'$  such that the diagram   
$$\diagram 
\Rtilde \rto & \Rtilde_{\cE^{-1}} \dto^{\simeq} \\
&\tilde{R}_{\cEi}'\\
R\uuto \rto & \cEi R \ar@{-->}[u]
\enddiagram$$
can be completed by a dotted arrow which is a homology isomorphism.  
\end{enumerate}
\end{lemma}

\begin{proof}
(i) If $H_*(R)=k[x_1, \ldots , x_r]$ then we may pick representative
cycles $\tilde{x}_1, \ldots , \tilde{x}_r$ for $x_1, \ldots, x_r$ in $R$ and then since $k[x_1, \ldots, x_r]$ is free as a commutative ring,
there is a map $k[x_1, \ldots , x_r]\lra R$ taking $x_i$ to
$\tilde{x}_i$, and this induces an isomorphism in homology.  

(ii) Suppose $H_*(\Rtilde)=\prod_i R_i$. First, we replace $\Rtilde$ by a DGA which is actually a
product. Indeed, we may choose cycles $\tilde{e}_i$ representing the
idempotents for the factors. Now form $\Rtilde_i =\Rtilde [1/
\tilde{e}_i]$, so that $H_*(\Rtilde_i)=R_i$. We therefore have a
quasi-isomorphism $\Rtilde \lra \prod_i \Rtilde_i$, and then we may take
the product of the individual zig zags of quasi-isomorphisms
connecting
$\Rtilde_i$ and $R_i$. 

(iii) Since $R$ is strongly intrinsically formal, we have a map $R \to
\Rtilde$; let $\tilde{\cE}$ denote the image of the multiplicatively closed subset
$\cE$ in $\Rtilde$. Then the map $\Rtilde_{\cEi}\lra
\tilde{\cE}^{-1}\Rtilde_{\cEi}$ is a quasi-isomorphism and by the
universal property of localization we may extend $R\lra 
\tilde{\cE}^{-1}\Rtilde_{\cEi}$ to a quasi-isomorphim
$\cEi R\stackrel{\cong}\lra \tilde{\cE}^{-1}\Rtilde_{\cEi}$. 
\end{proof}

When using these facts in diagrams we frequently apply the following
observation.

\begin{lemma}
\label{lem:extendRdiagram}
 Suppose given a partially ordered set $A$, a subset $B\subseteq A$
 with no maps out of it, and a diagram $R: A\lra \DGAs$.
 If we have a $B$-diagram $R': B\lra \DGAs$ and a map $\theta_B:
 R|_B\lra R'$, we may extend $R'$ to an $A$-diagram $\Rhat'$ (taking
 $\Rhat'(a)=R(a) $ if $a\not \in B$) and extend $\theta_B$ to a map $\theta : R\lra \Rhat'$. If
$\theta_B$ is a homology isomorphism, so is $\theta$. \qqed

\end{lemma}

\begin{example} {\em (Extending a diagram of rings along a map    at a vertex $v$.)}
\label{eg:extend}
Suppose $v$ is a vertex in a poset $A$ and we have a map $R(v)\lra
R'(v)$.  We may take $B$ to be the set of
 vertices with a map from $v$, and  define $R'$ on $B$ by taking 
$$R'(b)=R'(v)\tensor_{R(v)}R(b). $$
We obtain a map $R|_B\lra R'$ by  identifying $R|_B(b)$ as $R(v) \otimes_{R(v)} R|_B(b)$ and using the
map $R(v)\lra R'(v)$ at each point.  

Applying Lemma \ref{lem:extendRdiagram} we obtain a map of $A$-diagrams $R\lra \Rhat'$.  This is a pointwise homology isomorphism
provided it is a homology  isomorphism at $v$ and all the rings $R(b)$ are flat over
$R(v)$. 
\end{example}

\subsection{The intrinsic formality of the diagram $R_a$}
We are now prepared to prove the intrinsic formality of the $\PCf$-diagram
$\Ra=\pi^G_*(\Rttop)$ of graded rings. 

The reader may find it helpful to refer to Subsections \ref{subsec:rankone} and \ref{subsec:ranktwo}
where the rank 1 and rank 2 cases are made rather explicit. 

\begin{prop}
\label{prop:RCfformal}
The $\PCf$-diagram $\Ra$ is intrinsically formal, and in
particular $\Rt$ is formal.  
\end{prop}

\begin{proof}
The punctured cube $\PCf$ is a poset (indeed, it is the barycentric
subdivision of the $r$-simplex $\Delta^r$; we may identify each
vertex $v$ of $\PCf$ with the non-empty subset $S(v)=\{ i \st a_i =1\}$ of 
$\{0, \ldots ,r\}$).  The collection of vertices is ordered by the
size of $S(v)$, and we will work in order of increasing size. 

More precisely, we let $\PCf^{(d)}$ denote the $d$-skeleton of the
subdivided $r$-simplex (i.e.,  it contains all vertices $v$ with  $|S(v)|\leq d+1$.

Given a $\PCf$ diagram $\Rtilde$ with homology isomorphic to $\Ra$, we
replace it by an equivalent cofibrant diagram without change in
notation, and then proceed to construct a succession of homology isomorphisms
$$\Rtilde=\Rtilde_0\stackrel{i_0}\lra \Rtilde_1 \stackrel{i_1}\lra  \cdots \stackrel{i_{r-1}}\lra \Rtilde_{r}=\Rtilde$$
of $\PCf$-diagrams of DGAs, where $i_{d-1}:\Rtilde_{d-1}\lra \Rtilde_{d}$ is
constant on $\PCf^{(d-1)}$. As we do this, we construct
 maps
$$\theta_d: \Ra|_{\PCf^{(d)}}\lra \Rtilde_d|_{\PCf^{(d)}}$$
for $d \geq 1$ which are homology isomorphisms on the diagram on which they are
defined. For $d \geq 1$, the map $\theta_d$ extends $i_{d-1}\circ \theta_{d-1}$. 

After $r+1$ steps we obtain a homology isomorphism 
$$R_a=R_a|_{\PCf^{(r+1)}}\lra \Rtilde_{r+1}|_{\PCf^{(r+1)}}
=\Rtilde. $$

To start with, we construct $\Rtilde_1$. Note first that for each
of the $r+1$ vertices  $v$ of $\Delta^r$ the DGA  $\Ra(v)$ is a product of polynomial
rings indexed by $i$ (if the vertex corresponds to connected subgroups of
codimension $c$, then we take a product of all the $\cOcFH$ with $H$
connected of codimension $c$, each of which is a product of the
cohomology rings $H^*(BG/\tH)$ as $\tH$ runs through the subgroups
with identity component $H$. Altogether,  $i$ will run through all subgroups of codimension
$c$, connected or not). 

As in Lemma \ref{lem:algformal} (ii) we construct
DGAs $\Rtilde (v)_i$ with homology $\Ra(v)_i$ and a quasi-isomorphism
$$\Rtilde (v) \lra \prod_i \Rtilde (v)_i.$$ 
Choosing some ordering of
the vertices, we extend $\Rtilde_0$ along each of these
quasi-isomorphisms (as in Example \ref{eg:extend}) in turn to
obtain $\Rtilde_1$. We note that since there are no maps from one
vertex to another, all $r+1$ vertices end up with a product of DGAs.
Now using  Lemma \ref{lem:algformal} (i) at each vertex we obtain a map 
$$\theta_1: R_a|_{\PCf^{(1)}}\lra \Rtilde_1|_{\PCf^{(1)}}. $$

We continue inductively, supposing that after $d$ steps we have
defined $\Rtilde_s$ for $s\leq d$, and 
$$\theta_d: R^a|_{\PCf^{(d)}}\lra \Rtilde_d|_{\PCf^{(d)}}. $$
Once again we will form $\Rtilde_{d+1}$ from $\Rtilde_d$ by extending
the diagram of rings along ring maps at  the $\binom{r+1}{d+1}$ vertices  $v$ with $|S(v)|=d+1$
in turn.  When it comes to the turn of $v$, 
since there are no maps between these vertices, we still have
$\Rtilde_d(v)$ at $v$. This  has homology
$$H_*(\Rtilde_{d}(v))=H_*(\Rtilde (v))=\Ra (v)$$
 and this is obtained from polynomial rings
by alternately taking products and localizing with respect to sets of
Euler classes. Furthermore, we note that the Euler classes concerned come from
the vertices $w$ with $|S(w)|\leq d$, so that $\theta_d$ gives their images in
 the DGAs. We now form a new $\PCf$-diagram of DGAs by extending
$\Rtilde_d(v)$ along the alternate products and localizations
 using Lemma \ref{lem:algformal}.  When we
have extended along all these vertices we have obtained 
$\Rtilde_{d+1}$ from $\Rtilde_d$, and the products and localizations
let us extend $\theta_d$ to $\theta_{d+1}$.
\end{proof}

\subsection{The example of rank 1}
\label{subsec:rankone}
The argument proceeds as follows. We start with the cofibrant $\PCf$-diagram
$\Rtilde$ as in the top row. Extending along the top left hand
vertical we form the second row. The upward maps from the two outer vertices
of $R_a$ on the bottom row
can then be defined. The Euler classes are defined by the image of $R_a(0,1)$, and those are
inverted to form the third row, after which the middle vertical can be
filled in. 
$$\diagram 
\Rtilde \dto  &\Rtilde (0,1) \rto \dto & \Rtilde (1,1) \dto &\Rtilde (1,0) \lto
\dto\\
\Rtilde_1 \dto &\prod_i\Rtilde (0,1)_i \rto \dto_= & \prod_i \Rtilde(0,1)_i
\tensor_{\Rtilde (0,1)}\Rtilde (1,1) \dto &\Rtilde (1,0) \lto \dto^=\\
\Rtilde_2& \prod_i\Rtilde (0,1)_i \rto  & \cEi_G\prod_i \Rtilde(0,1)_i
\tensor_{\Rtilde (0,1)}\Rtilde (1,1)  &\Rtilde (1,0) \lto \\
R_a\uto &\cOcF \rto \uto & \cEi \cOcF \uto &\Q \lto \uto\\
&&&\\
\Rtop& (D\efp)^G\rto &(\siftyV{G}\smb D\efp)^G&(\siftyV{G})^G\lto\\
\Rttop& D\efp\rto &\siftyV{G}\smb D\efp&\siftyV{G}\lto
\enddiagram$$

\subsection{The example of rank 2}
\label{subsec:ranktwo}
It is too typographically complicated to display the full argument in
the way we did for rank 1, but it still seems worth displaying $\Ra$
and $\Rttop$. This lets one see the way that extending along (say) a
map of rings at the top vertex only affects the three other points not
on the bottom face, and then extending along (say) the middle vertex
on the bottom face only affects the central vertex.

\resizebox{0.95\textwidth}{!}{     
$$
\diagram
&&\prod_F\Q[c,d] \drto \dlto&&\\
&\prod_H \cEi_H\prod_F\Q[c,d] \drto && \cEi_G\prod_F\Q[c,d] \dlto &\\
&&\cEi_G \prod_H \cEi_H\prod_F\Q[c,d] &&\\
\prod_H\prod_{\tilde{H}}\Q[c] \uurto \rrto &&
\cEi_G \prod_H\prod_{\tilde{H}}\Q [c] \uto &&\Q \llto \uulto
\enddiagram
$$
}

\vspace{2ex}
The subgroups  $F$ run through finite subgroups, the subgroups $H$ run
through circle subgroups, and the subgroups $\tilde{H}$ run through
subgroups with identity component $H$. The polynomial rings $\Q [c,d]$
are the cohomology rings of $B(G/F)$ (all different but isomorphic),
and the polynomial rings $\Q [c]$ are the cohomology rings of
$B(G/\tilde{H})$. The polynomial ring $\Q$ is the cohomology ring of
$B(G/G)$. 

The above diagram is obtained by taking homotopy groups of the
following diagram $\Rttop$  of ring $G$-spectra. 

\vspace{2ex}

\resizebox{0.98 \textwidth}{!}{   

$$
\diagram
&&D\efp \drto \dlto&&\\
&\prod_H \siftyV{H}\smb D\efp \drto &&      \siftyV{G}\smb D\efp \dlto &\\
&&\siftyV{G}\smb \prod_H \siftyV{H}\smb D\efp &&\\
\prod_H\siftyV{H}\smb D\efhp \uurto \rrto &&
\siftyV{G}\smb \prod_H\siftyV{H}\smb D\efhp \uto &&\siftyV{G} \llto \uulto
\enddiagram
$$
}
\vspace{3ex}

\part{Algebra}

We have now established that the category of $G$-spectra is equivalent
to the cellularization of the category of DG-$\Ra$-modules, where
$\Ra$ is  a $\PCf$-diagram of rings.  It remains to
show this is Quillen equivalent to the category $d\cA (G)$ of DG objects in $\cA
(G)$. 

\section{Modules over  $\Ra$ and the standard model $\protect \cA_c^p(G)$}
\label{sec:AGasmodules}
In this section we make the punctured $(r+1)$-cube of rings $\Ra$
explicit and recall a number of basic structures from \cite{AGs}.

\subsection{Strategy}
\label{subsec:algstrategy}
We will use the algebraic machinery and terminology set up in
\cite{AGs}. As described in Section \ref{sec:standard} above,  $\cA (G)=\cA_c^p(G)$ is a category of
modules over the diagram $\RRc^p$ of rings based on {\bf p}airs of
{\bf c}onnected subgroups. However the topological argument delivers a category of modules over
the diagram  $\Ra $  based on subsets of $[0,r]=\{ 0, 1, \ldots , r\}$ which
are the dimensions of subgroups. For a totally ordered poset like $[0,r]$ there is no
distinction between subsets and  flags: taking a subset of $[0,r]$ with
$s$ elements  in decreasing order, we obtain a {\bf f}lag
$d_0>d_1>\cdots >d_s$. We will make the diagram $\Ra$ explicit in Subsection
\ref{subsec:RaRRd},   and observe that  $\Ra=\RRd^f$ in the notation of
\cite{AGs}. 

It is shown in \cite{AGs} that there is a subcategory $\cA_d^f(G)$ of
$\RRd^f$-modules equivalent to $\cA_c^p(G)$, namely $pqce$-modules,
which is to say that  satisfy a quasi-coherence condition ($qc$) 
an extendedness condition ($e$) and  whose values on vertices are
products ($p$). There is in fact a diagram of categories and adjoint pairs
$$\diagram
\cA_c^p(G)\ar@{=}[d]&\cA_c^f(G)\ar@{=}[d]&\cA_d^f(G)\ar@{=}[d]\\
\mbox{$qce$-$\RRc^p$-mod} \ar@{<->}[r]^{\simeq}_{p,f}&
\mbox{$qce$-$\RRc^f$-mod} \ar@{<->}[r]^{\simeq}\dto<-0.7ex>_i&\mbox{$pqce$-$\RRd^f$-mod} \dto<-0.7ex>
\\
&\rto<0.7ex>^{d_*}\uto<-0.7ex>_{\Gamma_c^f}
\mbox{$\RRc^f$-mod} \rto<0.7ex>^{d_*}\uto<-0.7ex>_{\Gamma_c^f}&\mbox{$\RRd^f$-mod}
\lto<0.7ex>^e\uto<-0.7ex>_{\Gamma_d^f}
\enddiagram$$
The absence of a label on the functor left adjoint to $\Gamma_d^f$ is
intentional: the functor is obtained by following round the other
three sides of the square, and is not the inclusion (the inclusion
does  not preserve sums). In fact, 
there is no need to  give further details of $pqce$ $\RRd^f$-modules
here,  since we will proceed
directly between $\RRd^f$-modules and $qce$-$\RRc^f$-modules. The
relevant result from \cite{AGs} is as follows. 

\begin{prop}
\label{prop:RRdqceRRcpadjunction}
\cite[Subsection 11.C]{AGs}
There is an adjoint pair
$$\adjunction{l}
{\mbox{$qce$-$\RRc^p$-$\mathrm{mod}$}}
{\mbox{$\RRd^f$-$\mathrm{mod}$}}
{\Gamma}$$
where $l=d_*if$ and $\Gamma=p\Gamma_c^fe$. \qqed
\end{prop}

We will briefly describe the functors in Subsection \ref{subsec:functors}  below.

\subsection{The diagram $\Ra$}
\label{subsec:RaRRd}

We will make explicit the  diagram $\Ra =\pi^G_*(\Rttop)$ of homotopy rings of
our $\PCf$-diagram $\Rttop$ of ring spectra as in Definition \ref{defn:PCfdiagrams}. 
It will appear that this is a special case of the machinery of
\cite{AGs}, so that $\Ra =\RRd^f$ in the notation of \cite{AGs}.  

Since $\pi^G_*(\siftyV{H}\smb D\efhp)=\cOcFH$ and since the map $S^0\lra S^V$ induces
multiplication by the Euler class $c(V)$ in $\pi^G_*(D\efp)=\cOcF$, it is straighfroward to read
off from the definition  of $\Rttop$ in Subsection
\ref{subsec:RttoponCif} 
 an  explicit and totally algebraic account. 

At the point $(a_0, \ldots, a_s,0,
\ldots , 0)$ with $a_s=1$, we form a ring from the product 
$$\prod_{\codim (H)=s}\cOcFH$$
by taking retracts and alternating products and localizations. To write this down, 
we recall from Equation~\ref{eqn.indices} the indexing set $I(t,a_t)$ which is a singleton if $a_t=0$ or 
all codimension $t$ connected subgroups otherwise. We also recall that
$\cE_K$ consists of Euler classes of all representations $W$ with
$W^K=0$, and adopt a convention to let us refer to a vacuous
localization in a similar notation: we take $\cE_{K,1}=\cE_{K}$ and 
$\cE_{K,0}=\{ 1\}$. Now we may write 
\begin{multline*}
\Ra (a_0, \ldots , a_s,0, \ldots , 0)=\\
\cEi_{G,a_0}
\prod_{H_1\in I(1,a_1)}  \cEi_{H_1,a_1} 
\prod_{H_2\in I(2,a_2)} \cEi_{H_2,a_2} 
\cdots 
\prod_{H_{s-1}\in I(s-1, a_{s-1})}\cEi_{H_{s-1}, a_{s-1}} \prod_{H_s\in I(s,a_s)}  \cO_{\cF/H_{s}}.
\end{multline*}
To save on the notation required to say we have nested subgroups, we
use the convention that inverting $\cE_H$ is deemed to
annihilate factors corresponding to lower dimensional subgroups $K$
not contained in $H$.

We will say more about what is meant by  inverting Euler classes in
Subsection \ref{subsec:Eulerintext}, but
first it is helpful illustrate the definition in low ranks to show its simplicity. 

\begin{example} {\em (The diagram $\Ra$ in rank 1.)}
 In
rank 1, if the objects of $\PCf $ are layed out as
$$v_1=(01)\lra (11)\lla (10)=v_0$$
the rings are
$$\cOcF \lra \cEi_G\cOcF\lla \cOcFG=\Q$$
\end{example}

\begin{example} {\em (The diagram $\Ra$ in rank 2.)}
\label{eg:Raranktwo}
In rank 2, if the objects are layed out as

\resizebox{0.9\textwidth}{!}{     
$$
\diagram
&&v_2=(001) \drto \dlto&&\\
&(011)\drto && (101) \dlto &\\
&&(111) &&\\
v_1=(010) \uurto\rrto&&
(110)\uto &&(100)=v_0\llto \uulto
\enddiagram
$$
}

\vspace{2ex}
\noindent
the diagram of rings is 
\vspace{2ex}

\resizebox{0.9\textwidth}{!}{  
$$
\diagram
&&\cOcF \drto \dlto&&\\
&\prod_H \cEi_H\cOcF \drto && \cEi_G\cOcF \dlto &\\
&&\cEi_G \prod_H \cEi_H\cOcF &&\\
\prod_H\cOcFH \uurto \rrto &&
\cEi_G \prod_H\cOcFH\uto &&\cOcFG = \Q \llto \uulto
\enddiagram
$$
}
\end{example}

\begin{example} {\em (The diagram $\Ra$ in rank 3.)}
The diagram in rank 3 is that of a subdivided 3-simplex, and a little
too complicated to display in print.  However we note that a new phenomenon
occurs in rank 3 since not every circle subgroup is contained in every 
2-torus subgroup (in lower ranks, containment of connected subgroups 
was determined by dimension). This means that at points of the form
$(a_0 11 a_3)$, we have
$$\Ra = \cdots  \prod_H\cEi_H \prod_K \cdots .$$
where $H$ is of codimension 1 and $K$ of codimension $2$. In view of
our  convention about inverting $\cE_H$, the second product is in fact over 
circle subgroups $K$ contained in $H$ (and not over all circle subgroups).

\end{example}
\subsection{Internal and external Euler classes} 
\label{subsec:Eulerintext}
The $G$-equivariant  homotopy of $\siftyV{H}\sm X$ is always the
$G/H$-equivariant homotopy of the
geometric fixed point spectrum $\Phi^HX$. Sometimes this is calculated
from geometric knowledge of $X$, but if $X$ has Thom
isomorphisms for representations $V$ with $V^H=0$ it can also be
calculated from $\pi^G_*(X)$ by inverting Euler classes when they are
defined.  However, some slightly extended use of the algebraic
notation for inverting Euler classes requires some explanation. 

The issue first arises  at $(110)$ in rank
2. A brief explanation of this special case will make plain the
general meaning.

The notation suggests we are inverting $G$-equivariant Euler
classes (elements of $\cOcF$) on something (viz $\prod_H \cOcFH$), but the object in
question  is not an $\cOcF$-module. Considering the geometry of the
situation we see  that what is really
happening is passage to a direct limit along maps $S^{W_1}\lra
S^{W_2}$ coming from inclusions $W_1\subseteq W_2$ with 
$W_1^G=W_2^G=0$. Since the spheres are finite complexes 
this passes inside the product. To see what happens on the $H$th
factor we  write $W=W^H\oplus W'$, and note that
$S^{W} \sm \siftyV{H}\simeq S^{W^H} \sm \siftyV{H}$. Thus when we
write  $\cEi_G \prod_H \cOcFH$, this means a direct limit
 over multiplication by  the product elements $\prod_H
 c(W_2^H/W_1^H)$, which is  the Euler class of the inclusion $W_1^H\lra
 W_2^H$, as an element of $\cOcFH$.

Note that this discussion also explains why the $\siftyV{G}$ does not lead to any algebraic
inversion at $(100)$.

\subsection{Structure maps for rings}

Next we  describe  the structure maps in $\Ra$
more precisely. Once again, the main complication is notational. 

If we have an inclusion $\ist : \sigma \lra \tau$  of subsets of $\{
0, \ldots, r\}$ then we have a structure map 
$$\Ra (\ist)  :\Ra (\sigma )\lra \Ra (\tau). $$
Suppose $s$ is the largest element of $\sigma$. 
We start by  describing the case when $\tau$ has exactly one more element
than $\sigma$, say $\tau  =\sigma\cup \{t\}$. There are two cases.

{\bf Case 1: $t>s$.}
In this case  $t$ is the last non-zero term in $\tau$ and we may
concentrate on the  contribution of the last two non-trivial terms, namely the $s$th and
$t$th. Thus we must describe 
$$j_s^t: \prod_{H_s \in I(s,1)}\cO_{\cF/H_s}\lra 
\prod_{H_s \in I(s,1)} \cEi_{H_s} \prod_{H_t\in I(t,1)}\cO_{\cF/H_t}$$
in the sense that the map is obtained from this by applying
alternating products and localizations for the $0$th to the $(s-1)$st
terms. Now $j_s^t$ is itself a
product over $I(s,1)$ of terms given as the composite
$$ \cO_{\cF/H_s}\lra  \prod_{H_t\in
I(t,1)}\cO_{\cF/H_t}\lra \cEi_{H_s} \prod_{H_t\in
I(t,1)}\cO_{\cF/H_t}. $$
The first map has components given by inflations for $H_s\supseteq
H_t$ and the second is localization. 

{\bf Case 2: $t<s$.}
In this case  $s$ is the last non-zero term in both $\sigma $ and
$\tau$ and the only change is to replace the expression 
$\prod_{H_t\in I(t,0)}\cEi_{H_t,0} $ (which actually means take the
  product over a singleton of a localization doing nothing!) with 
$\prod_{H_t\in I(t,1)}\cEi_{H_t} $, and here a diagonal map is used. 

More precisely if 
\begin{multline*}
\Ra (a_{t+1}, \ldots , a_s,0, \ldots , 0)
=\\
\prod_{H_{t+1}\in I(t+1,a_{t+1})} \cEi_{H_{t+1},a_{t+1}} 
\prod_{H_{t+2}\in I(t+2,a_{t+2})} \cEi_{H_{t+2},a_{t+2}} 
\cdots 
\prod_{H_{s-1}\in I(s-1, a_{s-1}}\cEi_{H_{s-1},a_{s-1}} \prod_{H_s\in I(s,a_s)}    \cO_{\cF/H_s}
\end{multline*}
we take the map into the product whose components are localizations
$$\{ l_{i_t}\}: \Ra (a_{t+1}, \ldots , a_s,0, \ldots , 0) \stackrel{}\lra
\prod_{H_t\in I(t,1)} \cEi_{H_t}\Ra (a_{t+1}, \ldots , a_s,0, \ldots ,
0)$$
and then apply alternate products and localizations to incorporate the terms from the $0$th to the $(t-1)$st. 

When $\tau$ has more than one extra vertex than $\sigma$ the map $\Ra
(i_{\sigma}^{\tau})$ is the composite of the maps adding one vertex at
a time. It is apparent from the description above that the order in
which this is done makes no difference. 

\subsection{The algebraic diagram $\Ra$ is the diagram  $\RRd^f$ from \cite{AGs}}
We briefly recall the framework of \cite{AGs}, so that we may observe
that $\Ra$ is precisely the diagram of rings appearing there as $\RRd^f$.

The diagram $\RRc$ is the contravariant functor on the poset
$\connsubG$ of connected subgroups
of $G$ with value $\cOcFK$ at $K$, and with inflation maps between
them. The dimension function $d:\connsubG\lra [0,r]$ gives rise to
a dimension function on the posets of flags. In \cite{AGs} it is
explained that such a function induces a map 
$d_!^e$ collecting together the subgroups of the same
dimension, and extends to flags using localizations and products. This 
specializes precisely to the description of  $\Ra$, so that $\RRd^f=\Ra$.

\subsection{Description of the functors}
\label{subsec:functors}
We now briefly recall from \cite{AGs} the functors appearing in the
diagram from Subsection \ref{subsec:algstrategy} above. 

The left hand horizontal translates between indexing over pairs and
indexing over flags. For $qce$-modules the value of a module on a flag
only depends on the largest and smallest subgroup in the flag, so this
translation is nugatory; the letter $p$ is for the translation to
pairs and the letter $f$ for the translation to  flags.

The vertical $i$ is the inclusion of $qce$-modules in all
$\RRc^f$-modules, and the functor $\Gamma_c^f$ is the right adjoint to
$i$ constructed in \cite[Section 11]{AGs} following the pattern of  \cite{tnq2};
we will not need to use an explicit  construction.  

The functor $e$ is obtained by taking idempotent pieces. Indeed, if
$M$ is an $\RRd^f$-module and $F=(K_0\supset K_1\supset \cdots \supset
K_s)$ is a flag of connected subgroups with dimension 
$dF=(d_0> d_1>\cdots >d_s)$ there is an idempotent $e_F \in
\RRd^f(dF)$ picking out the flag $F$; we take $(eM)(F)=e_F(M(dF))$
(see \cite[Section 6]{AGs} for further details). 

The functor $d_*$ is left adjoint to $e$. The natural idea is to 
take direct sums: if $N$ is an $\RRc^f$-module then 
$(d_*N)(d) =\bigoplus_{dF=d}N(F)$. However this is
not compatible with  structure maps and one must take the submodule of the
product it generates. There is a little work to be done to check this
makes sense, and the  construction is described  in detail in \cite[Section 6]{AGs}.

\section{Model structures and equivalences on the algebraic categories}
\label{sec:algebraicmodels}

The output of Parts 1-3 is a Quillen equivalence between 
the category of rational $G$-spectra and an algebraic 
category $\cellRamod$, the cellularization of 
the category of modules over the diagram $\Ra$ of rings. The purpose of this section and the  next
is to simplify the model by avoiding the need for cellularization: we
show that the cellularization of the category of $\Ra$-modules is
Quillen equivalent to  the smaller category of objects in the category of qce-$\RRc$-modules,
 $\cA_c^p (G)$.

This section gives a model structure on $d\cA (G)$ and recalls some
facts about the torsion functor relating it to the appropriate
category of $\RRc^p$-modules.  
\subsection{Two examples}
Before turning to general results we give two examples of this
phenomenon in a simpler context: the first for free spectra in
general, and the second for semifree spectra for the circle group. 

Algebraically, the first example is for modules over  a single polynomial ring. 

\begin{example} {\em (Free $G$-spectra and torsion modules over a polynomial ring.)}
If $G$ is a connected compact Lie group, the category of free rational
$G$-spectra is Quillen equivalent to the category of torsion modules
over the polynomial ring $H^*(BG)$ \cite{gfreeq}.

The topology gives a Quillen equivalence with the model category
$\cellHBGmodp$:  the category $\HBGmodp$ of DG-modules over $H^*(BG)$ 
with the algebraically projective model structure cellularized with respect to 
the residue field $\Q$. This in turn is Quillen equivalent to the model category
$\cellHBGmodi$, the category $\HBGmodi$ of DG-modules over $H^*(BG)$ 
with the algebraically injective model structure cellularized with respect to 
the residue field $\Q$. The model structure $\HBGmodp$ is well known:
it is the cofibrantly generated structure  right-lifted from vector
spaces. Similarly, the model structure $\HBGmodi$ may be constructed by left-lifting from vector
spaces using \cite[Theorem 2.2.3]{HKRS}.

Finally, if $\fm$ is the ideal of positive codegree elements in
$H^*(BG)$, we consider the adjunction 
$$\adjunction
{i}
{\torsHBGmod}
{\HBGmodi}
{\Gamma_{\fm}}$$
where $\Gamma_{\fm}$ is the $\fm$-power torsion functor. The category
of torsion modules has an injective  model structure (weak
equivalences are homology isomorphisms and cofibrations are
monomorphisms). This can be constructed by left lifting from the injective model
structure on all modules using \cite[Theorem 2.2.1]{HKRS}.  Accordingly,  $i$ preserves
cofibrations and acyclic cofibrations and the adjunction is a 
Quillen adjunction.  Finally, $\Q$ is a small generator
of the torsion modules, so the Cellularization Principle
\cite{cellprin}  shows this
induces a Quillen equivalence
$$\torsHBGmod\simeq \cellHBGmodi. $$

This example is directly relevant to the algebraic model $\cA (G)$ for
a torus $G$.  Indeed, if we consider objects of $\cA (G)$ which are
concentrated at the connected subgroup $1$, and for which there is 
no contribution from other finite subgroups, the quasicoherence
condition on $\RRc$-modules in
$\cA (G)$ implies that objects concentrated at the subgroup $1$  are
precisely the torsion $H^*(BG)$-modules. 
\end{example}

Algebraically, the second example works with a rather small diagram of rings, with
each of the rings Noetherian.

\begin{example} {\em (Semifree $\T$-spectra.)}
For the circle group $\T$, our models  are over a punctured square of
rings. If we simplify the category by restricting attention to
semifree spectra, the rings that occur are much smaller
and we can see the issues introduced by diagrams without having the infinite
number of subgroups to complicate matters. 

The diagram of rings for semifree $\T$-spectra is 
$$\Ra =
\left( 
\begin{array}{ccc}
&&R^v\\
&&\downarrow\\
R^n&\lra &R^t
\end{array}
\right)
=
\left( 
\begin{array}{ccc}
&&\Q \\
&&\downarrow\\
\Q [c]&\lra &\Q [c,c^{-1}]
\end{array}
\right)
$$
An $\Ra$ module $M$ consists of a diagram
$$
M =
\left( 
\begin{array}{ccc}
&&M^v\\
&&\downarrow\\
M^n&\lra &M^t
\end{array}
\right)
=
\left( 
\begin{array}{ccc}
&&V\\
&&\downarrow\\
N&\lra &P
\end{array}
\right)
$$
where $V$ is a $\Q$-module, $N$ is a $\Q [c]$-module and $P$ is a
$\Q[c,c^{-1}]$-module. 

There are four relevevant model categories. To start with, on each of
the three objectwise module categories we can choose either
the algebraically projective model structure or the algebraically 
injective model structure. We need to make the same choice at each 
vertex so that the maps in the diagram respect the model structures. Secondly, having made that choice,
we may choose either the diagram theoretically projective or injective
model. Since the diagrams are both direct and inverse,  the results of
\cite{diagrammodcats} show these models all exist, and it is clear 
there are Quillen equivalences between either of the two binary choices by using the 
identity functors.  In fact, we only need three of the four possibilities; a
diagram-projective, algebraically-injective model structure does
not appear.

Having made a choice, we cellularize with respect to the two
modules corresponding to basic geometric generators
$$
\bS= \Ra =
\left( 
\begin{array}{ccc}
&&\Q\\
&&\downarrow\\
\Q [c]&\lra &\Q [c,c^{-1}]
\end{array}
\right)
\mbox{ and }
G_+=
\left( 
\begin{array}{ccc}
&&0\\
&&\downarrow\\
\Q&\lra &0
\end{array}
\right)
$$
By \cite[Corollary 2.8]{cellprin}, cellularization preserves the Quillen equivalences mentioned above. 

Finally, for $\qcemodcat{R}$, the underlying category consists
of  quasi-coherent extended modules. The quasi-coherence 
condition is that  the horizontal map is localization in the sense that
$$M^t \iso M^n[1/c]. $$
The extendedness is the condition that the vertical is induction
in the sense that 
$$M^t \iso \Q [c,c^{-1}]\tensor V. $$
The inclusion of this category of modules has a right adjoint, and we
may argue as in the previous example.  We will give the category of
$qce$-modules a model structure so that 
it is Quillen equivalent to the cellularization of the doubly injective model structure. 
\end{example}

\subsection{Construction of  model structures.}
\label{subsec:algmodelstructures}

In the remainder of this section we turn to the full $\PCf$-diagram
$\Ra$ of rings. We saw in Section \ref{sec:AGasmodules} that
$\Ra=\RRd^f$ in the notation of \cite{AGs}, and we  outline here the proof that the cellularization of the
doubly projective model category  of $\Ra$-modules is equivalent to the category of DG
qce-$\RRc^p$-modules $\cA_c^p (G)$ as in Section \ref{sec:AGasmodules}. 

We begin by  formally introducing the algebraic model structures we
use. 

These are model structures on diagrams of modules over diagrams of
 DGAs.  For each individual DGA there is an algebraically projective
model structure \cite[Theorem 4.1]{ss1}, which is the cofibrantly
generated model structure  lifted along the right adjoint forgetful
functor to $\Q$-modules
in the usual way; the proof may be obtained by adapting
\cite[Section 2.3]{hovey-model}.    The adaption of the construction
of the injective model for  an individual DGA  uses a little more
algebra, so instead we construct the injective model structure by
lifting along the left adjoint forgetful functor to $\Q$-modules using
\cite[Theorem 2.2.3]{HKRS}.

Making a choice of algebraically projective or injective model
structures at all points in the diagram we may then seek to define
a diagram-theoretically projective model structure (in which weak
equivalences and fibrations are given pointwise)  or
a diagram-theoretically injective model structure (in which weak
equivalences and cofibrations are given pointwise).  Since the finite diagram shapes we are interested in here are both direct and inverse, both diagram-projective and diagram-injective model structures exist by~\cite[Proposition 3.1]{diagrammodcats} for either of the algebraic choices (made consistently throughout the diagram).  Only three of the four choices appear in our work here, the doubly-projective 
case (which also follows from~\cite[6.1]{ss-mon}), the doubly-injective case, and the diagram-injective,  algebraically-projective 
case.

\subsection{A model structure on torsion modules}
\label{subsec:torsionmodel}

We consider the category $\cA_c^p (G)$ of $qce$-$\RRc^p$-modules and 
show the associated category of DG objects  admits a model structure
with quasi-isomorphisms as the  weak equivalences.

\begin{prop}
The category $d\cA_c^p (G)$ of DG $qce$-$\RRc^p$-modules admits a
model structure with weak equivalences the  quasi-isomorphisms
and cofibrations the monomorphisms at each object. 
The fibrant objects are injective if the differential
is forgotten, and fibrations are surjective maps with fibrant kernel. 
\end{prop}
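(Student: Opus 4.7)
The plan is to identify $\cA(G)=\qcemodcat{\Ra}$ as a Grothendieck abelian category and then appeal to a standard construction of an injective model structure on its category of chain complexes. The weak equivalences and cofibrations proposed in the statement are precisely those of a Joyal--Beke injective model structure on $DG$-objects in a Grothendieck abelian category, so the bulk of the work is in verifying the hypotheses.

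\textbf{Step 1.} I would first verify that $\cA(G)$ is abelian. The ambient category $\Rmod$ of modules over the $\LI(G)$-diagram of rings $\Ra$ is manifestly abelian (kernels and cokernels computed objectwise). The qce conditions of Definition \ref{def.qceRmod} cut out a full subcategory, and the main point is that these conditions are preserved by kernels and cokernels. For quasi-coherence this is immediate since the horizontal maps are localizations and localization is exact, so $\cEi_{H/L}\otimes_{\cOcFL}(-)$ is an exact functor which preserves isomorphisms of the form $\betat_K^H$. For extendedness one uses that the inflation maps $\cOcFK \to \cOcFL$ give flat extensions of the relevant localized rings (this is the key input from \cite{tnq1}), so the extension-of-scalars functors appearing in $\alphat_K^H$ are exact.

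\textbf{Step 2.} I would then verify that $\cA(G)$ is Grothendieck: filtered colimits of qce-modules computed objectwise are qce, because both localization and extension of scalars commute with filtered colimits, and filtered colimits are exact in modules over a ring; the structure sheaf $\cO=\piA_*(S^0)$ together with the images $\piA_*(G/H_+)$ provide a set of generators. Having established that $\cA(G)$ is Grothendieck abelian, I would invoke the standard injective model structure on unbounded chain complexes in a Grothendieck abelian category (Joyal, Beke, Hovey): weak equivalences are quasi-isomorphisms, cofibrations are the monomorphisms, and fibrations are surjections with dg-injective (fibrant) kernel. The characterization of fibrant objects as those which are injective upon forgetting the differential is then the usual one: a bounded-below fibrant object is a complex of injectives, and in general fibrancy is DG-injectivity, which reduces to graded injectivity in the setting at hand because $\cA(G)$ has finite injective dimension by \cite{tnq1, tnq2}.

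\textbf{Step 3 (main obstacle).} The genuinely delicate point is the interaction of the qce conditions with the construction of enough injectives. Concretely, I need to know that the inclusion $\cA(G)\hookrightarrow \Rmod$ has a right adjoint (a ``qce-ification'' functor built from localization and extension on each component), so that injectives in $\cA(G)$ can be produced by applying this right adjoint to injectives in $\Rmod$ (and then verifying that the qce-ification of a pointwise injective is still injective in $\cA(G)$). This boils down to exactness of the qce-ification functor, which follows from the flatness properties used in Step 1, combined with the fact that extended objects are determined by their values on the leading diagonal. Once enough injectives are in place, the Grothendieck hypothesis supplies the factorizations, and the fibrant objects are characterized as stated. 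The finite injective dimension of $\cA(G)$ then ensures that fibrant replacement is well-behaved, and the fibrations are exactly the surjections with fibrant kernel, as claimed.
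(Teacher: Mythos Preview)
Your approach is correct in outline but takes a genuinely different route from the paper. You invoke the general Joyal--Beke injective model structure on chain complexes in a Grothendieck abelian category, then use finite injective dimension to identify the fibrant objects as graded-injectives. The paper instead builds the model structure by hand, following the method of \cite[Appendix B]{s1q}: it specifies an explicit set $\cBI$ of basic injectives $\bI_{\Ht}=f_H(H_*(BG/\Ht))$ (one for each subgroup $\Ht$ with identity component $H$), takes as generating fibrations the maps $P(I)\to K(I)$ and as generating acyclic fibrations $P(I)\to 0$, and then runs a finite dual-to-small-object argument. The crucial use of finite injective dimension occurs in the factorization step: given $f:X\to Y$, one builds a tower of fibrations over $Y$ by attaching cocells $P(I_s)\to K(I_s)$ indexed by a finite injective resolution of $H_*(X)$ in $\cA(G)$, so the process terminates after $r$ steps. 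The characterization of fibrant objects as graded-injective is then immediate from the construction rather than requiring the separate argument that K-injective $=$ graded-injective.

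Your route trades this explicit construction for general machinery, which is a reasonable exchange. Two points deserve more care. First, your claim that $\cA(G)$ is Grothendieck hinges on producing a generator, and your proposed set ($\cO$ together with the $\piA_*(G/H_+)$) is not obviously generating; you should either verify this directly or argue via the coreflective embedding $i\dashv\Gamma$ into $\Rmod$ (using that $i$ is exact and colimit-preserving). The paper's argument sidesteps this entirely, needing only that the explicit set $\cBI$ contains sufficiently many injectives, which is established in \cite{tnq1}. Second, your reduction from DG-injectivity to graded-injectivity via finite injective dimension is correct, but it is a genuine lemma (an acyclic complex of injectives in a category of finite injective dimension is contractible, hence every complex of injectives is K-injective); you should state and prove it rather than asserting it as ``the usual one''. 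Your Step 3 is somewhat tangential to the Beke approach: once you know $\cA(G)$ is Grothendieck, enough injectives come for free, so the discussion of qce-ification of injectives is needed only for the fibrant characterization, not for existence of the model structure.
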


\begin{proof}
We use the method of \cite[Appendix B]{s1q}, where it is shown that
one can often construct a model structure using a type of fibrant
generation argument provided one has a suitable finiteness of
injective dimension.

We have an abelian category $\cA =\cA_c^p (G)$ and we aim to put a model 
structure on the category of DG objects of $\cA$. 
We will specify a set $\cBI$ of {\em basic injectives}
containing sufficiently many injectives (i.e., any object of 
$\cA$ embeds in a product of basic injectives).  An injective $I$ 
is viewed as an object $K(I)$ of $d\cA$ with zero differential. The
notation is chosen to suggest an Eilenberg-Mac~Lane object (or
cosphere). Next, we let $P(I)=\fibre (1: K(I)\lra K(I))$, with the
notation chosen to suggest a path object (or
codisc).  The set $\mcL$ of generating fibrations consists of the maps
$P(I)\lra K(I)$ for $I$ in $\cBI$. 
The set $\cM$ of generating acyclic fibrations consists of the 
maps $P(I)\lra 0$ for $I$ in $\cBI$. 

We now take $\we$ to consist of quasi-isomorphisms, 
$\cof$ to be the maps with the left lifting property with respect to ${\cM}$ and $\fib$ to
be the maps with the right lifting property with respect to ${(\we \cap \cof)}$, and prove this 
forms the model structure of the lemma. We outline the four main steps
and then turn to proving they can be completed in our current
situation. 

Step 1:  Show that $\cof$ consists of objectwise
monomorphisms. 

Step 2: Show that for any $X$ there is an objectwise monomorphism 
$\alpha: X\lra P(I)$ for some injective $I$. 

Step 3: Show that the maps $P(I)\lra K(I)$ and $P(I)\lra 0$ in $\mcL$ and $\cM$
respectively are in $\fib$. 

Note that since any injective is a retract of a product of basic
injectives, it follows that $P(I)\lra K(I)$ and $P(I)\lra 0$  are fibrations for any
injective $I$.   Since we have chosen $\cBI$ to contain enough injectives,
one of the factorization axioms follows immediately, since we may
factorize $f: X\lra Y$ as 
$$X\stackrel{\{ f,\alpha\} }\lra Y\times P(I)\stackrel{\simeq}\lra
Y, $$
with  $\alpha$ as in Step 2. 

Step 4: Prove the second factorization axiom using only fibrations
formed from those named in Step 3. 

More precisely, given $f: X \lra Y$, we form a factorization $X\lra
X'\lra Y$ with $X\lra X'$ a quasi-isomorphism and $X'\lra Y$ a
fibration formed by iterated pullbacks of fibrations $P(I)\lra
K(I)$. This is precisely dual to the usual argument attaching cells to
make a map of spaces into a weak equivalence, but because the
dual of the small object argument does not apply, we use the
finiteness of injective dimension of $\cA$ to see that only finitely many
steps are involved in the process (details below). The map $X\lra X'$ can be made into a
cofibration by taking the product of $X'$ with a suitable $P(I)$  as in the proof of the first
factorization argument. It follows using the defining right lifting property that an
arbitrary fibration is a retract of one formed by iterated pullbacks of fibrations $P(I)\lra
K(I)$ or $P(I) \lra 0$.

It remains to verify the four steps can be completed. 
We
follow the pattern from the case of the circle group in~\cite[Appendix B]{s1q}. We note that for 
each connected subgroup $H$ of $G$ there is an evaluation functor 
$$ev_H: \mbox{$\Ra$-modules} \lra \mbox{$\cOcFH$-modules}$$
with right adjoint $f_H$. In particular, if $N$ is a torsion
module, $f_H(N)$ lies in $\cA (G)$ and 
$$\Hom_{\cA (G)}(X,f_H(N))=\Hom_{\cOcFH}(\phi^HX, N). $$
We take the basic injectives to be those of the form 
$$\bI_{\Ht}=f_H(H_*(BG/\Ht))$$
where $\Ht$ is any subgroup with identity component $H$. 
It is shown in \cite[2.20]{tnq1}  that this set contains sufficiently many injectives. 

The following elementary lemma lets us reduce verifications to
statements about modules with zero differential over a (single object)
ring. We write $\Hom$ for the differential graded object of graded
$\cA$-morphisms  and let $DG-\Hom$ denote the group of morphisms
commuting with the differential. The differential on $\Hom$ is defined
so that the DG-morphisms are the $0$-cycles in $\Hom$.

\begin{lemma}
\label{lem:hom}
\begin{enumerate}[(i)]
\item $\Hom_{\cA} (X, K(f_H(M)))=\Hom_{\cOcFH} (\phi^H X,M)$ 
\item $DG-\Hom (X, K(f_H(M)))=\Hom (\phi^H X/d\phi^HX,M)$
\item $DG-\Hom (X, P(f_H(M)))=\Hom (\Sigma \phi^H X,M)$
\end{enumerate}
\end{lemma}

It follows from this lemma by the left lifting property that $\cof$ consists of objectwise
monomorphisms (Step 1), see also~\cite[Lemma B.2]{s1q}, and that we may find a monomorphism $\alpha$ 
in the first factorization argument (Step 2): for this we first embed all $\phi^HX$ in
some injective $I_H(X)$  ignoring the differential and use Lemma
\ref{lem:hom}(iii) to obtain a map to $P(f_H(I_H(X)))$, and take the
product of these over all $H$  to obtain $P(I)$.

This lemma also makes it straightforward to 
verify that objects of $\mcL$ and $\cM$ are fibrations. The case of $P(I)\lra 0$ 
is simply the defining property of an injective. The problem 
$$\diagram
A\dto^i \rto^{\alpha} & P(f_H(\bI_{\Ht}))\dto\\
B \rto^{\beta}  \urdashed|>\tip^h&K(f_H(\bI_{\Ht})))
\enddiagram$$
is equivalent to 
$$\diagram
\Sigma^{-1}\phi^HA /d\phi^HA \rrto^d\ddto^i&&\phi^H A\ddto^i \dlto^{\overline{\alpha}}\\
&\Sigma^{-1}\bI_{\Ht}&\\
\Sigma^{-1}\phi^HB /d\phi^HB \urto^{\tilde{\beta}} \rrto^d&&\phi^H B \uldashed|>\tip^{\overline{h}}
 \enddiagram$$

To find a solution we use  a standard diagram chase. We first use the fact that $i$ is a homology
epimorphism to deduce that $\tilde{\beta}$ vanishes on cycles and the
fact that it is a homology monomorphism to see that this means that
$\tilde{h}$ is consistently  defined on $\phi^H A+d\phi^H B$. Finally,
we use the defining property of injectives to extend it over $\phi^HB$.

This leaves Step 4. 
Here we start by forming an exact sequence 
$$0\lra H_*(X)\lra H_*(Y)\oplus  I_0 \lra I_1 \lra \cdots \lra I_N \lra 0$$
in $\cAs (G)$, where the $I_s$ are injective. The finite injective
dimension of $\cAs (G)$ ensures such an exact  sequence exists. 
 We now realize this by a tower of fibrations
$$Y\lla X_0 \lla \cdots \lla X_N=X',  $$
together with lifts 
$$\diagram 
&\dto\\
                            &X_1\dto \\
                            &X_0\dto \\
X\rto^f \urto^{\hspace*{20ex}f_0} \uurto^{f_1} & Y. 
\enddiagram$$
We take $X_0=Y\oplus K(I_0)$, and the subsequent
objects and maps are constructed using the diagram
$$\diagram
X \drrto \drto \ddrto &&\\
&X_{s}\dto \rto &P(\Sigma^{-s}I_s)\dto\\
&X_{s-1} \rto&K(\Sigma^{-s}I_s)
\enddiagram$$
where the lower horizontal is chosen to realize the inclusion of 
$\im (I_{s-1} \lra I_s)$ in $I_s$. 
The map $f_N: X\lra X_N$ is necessarily a quasi-isomorphism, and can be
made into a monomorphism by taking a product with a suitable $P(I)$. 

This completes the sketch proof of the proposition.
\end{proof}

\subsection{Equivalence of models of torsion modules}
\label{subsec:itorsequiv}

We recall from Subsection \ref{subsec:RaRRd} that $\Ra=\RRd^f$, and work with the adjunction of Proposition 
\ref{prop:RRdqceRRcpadjunction}.

\begin{prop}
The adjunction
$$\adjunction{l}{\cA_c^p (G)=\mbox{$qce$-$\RRc^p$-$\mathrm{mod}$}}{\Ramod_{ii}}{\Gamma}$$
is a Quillen adjunction, where the subscript $ii$ refers to the   use
of  the doubly injective model structure on $R_a$-modules 
(i.e., injective in both the module theoretic and diagram theoretic 
sense) and  where $l=d_*if$ and $\Gamma=p\Gamma_c^fe$.

Cellularizing with respect to the images of the topological cells
induces a Quillen equivalence
$$\cA(G)=\cA_c^p(G)=\mbox{$qce$-$\RRc^p$-modules} \simeq \cellmodcat{\Ra}_{ii}. $$
\end{prop}

\begin{proof}
First we need to check that $l=d_*if$ preserves cofibrations and acyclic
cofibrations so that we have a Quillen adjunction. 

The cofibrations in $\cA_c^p(G)$ are the monomorphisms, which are the
objectwise monomorphisms. Similarly, the cofibrations in an algebraically injective model structure are precisely the
monomorphisms. The cofibrations in the doubly injective $\Ra$-module category are 
precisely the morphisms which are objectwise cofibrations, namely the
objectwise monomorphisms.  It is obvious that $f$ and $i
$ preserve monomorphisms. It is also clear that the functor $d_!$
(given by taking the product of the values) preserves
monomorphisms. Since $d_*N \subseteq d_!N$, it follows that  $d_*$ also
preserves momomorphisms. 

The weak equivalences in both categories are objectwise quasi-isomorphisms, and we will show $l$
preserves all homology isomorphisms. Since $l$ is defined at the level
of abelian categories, it takes mapping cones to mapping cones. 
It therefore suffices to show that if $X$ is a $qce$-module  with $H_*(X)=0$
then $H_*(lX)=0$. For this we use a filtration described in
\cite[Section 6]{AGs} (the map $d: \Sigma_c\lra [0,r]$ and the diagram
$\RRc^f$ take the roles of the map $\pi :\Sigma\lra \overline{\Sigma}$  and the
ring $R^f$). To avoid clutter, 
we will omit the notation $if$  since $ifX$ takes the same values as
$X$ on pairs. 

For each flag $f=(f_0>\cdots >f_s)$ of dimensions we consider
the value $(d_*X)(f)$ at $f$.  Inside this we have the generating
submodules $M_{f_i}$ for $i=0,1, \ldots , s$ (this is the submodule generated by
the image of  $(d_*X)(f_i) =\bigoplus_{\dim K=f_i}X(K)$). There is an
associated Mayer-Vietoris spectral sequence for these, showing that it 
suffices to show that for each face $e=(e_0>e_1>\cdots >e_t)\subset  (f_0>f_1>\cdots >
f_s)=f$ the intersection 
$$M_e=\bigcap_{j} M_{e_j}$$
is acyclic. A combinatorial lemma \cite[Lemma 6.7]{AGs} shows that $M_e$ is generated by the image
of the diagonals including $e$ in $f$. Furthermore
$$M_e =\sum_{\dim E=e}M_E =\bigoplus_{\dim E=e}M_E$$
so it suffices to show that $M_E$ is acyclic.

Now consider the diagram 
$$\diagram
\RRd (f)\tensor_{\RRd (e)} X(E) \rto \dto & (d_!M)(f)\dto^{\cong}\\
d_!e[\RRd (f)\tensor_{\RRd (e)} X(E)] \rto  & (d_!M)(f).
\enddiagram$$
in which $M_E$ is the image of the top horizontal. We argue that the
top horizontal is in fact a monomorphism, and it then follows since
($\RRd (f)$ is flat over $\RRd (e)$) that $M_E$ is acyclic.  

In fact the  bottom horizontal is an isomorphism since $X$ is $qce$;
indeed the $F$th idempotent piece is the map  $\RRd (F)\tensor_{\RRd
  (E)}X(E)\lra X(F)$. The left hand vertical is a monomorphism since 
it can be viewed as a composite
$$\RRd (f)\tensor_{\RRd (e)} X(E) \lra \RRd (f)\tensor_{\RRd (e)}
\prod X(E) \lra 
d_!e \RRd (f)\tensor_{\RRd (e)} X(E); $$
the first is a monomorphism since the diagonal is and $\RRd (f)$ is
flat over $\RRd (e)$, and the second map is an isomorphism. 
It follows that  the top horizontal is a monomorphism as required.

This shows that we have a Quillen pair, and we now  cellularize with respect to the images of the cells
$G/H_+$.   By the Cellularization Principle \cite{cellprin}   
this induces a Quillen equivalence of cellularizations since the cells are small and lie
in $\cA_c^p (G)$. 

Finally, it remains to check that cellularization is the identity on $\cA_c^p (G)$.  
This will be completed by
Theorem \ref{prop:cellequivisequiv} which states that cellular equivalences for qce
modules are precisely the quasi-isomorphisms.   Thus, 
$$\cA_c^p(G)=\mbox{$qce$-$\RRc^p$-modules} =\cell \mbox{$qce$-$\RRc^p$-modules} .\qedhere$$
\end{proof}

\begin{remark}
We would like to upgrade the equivalence to being monoidal, but we
note that although the category $\cA (G)$ is monoidal,  the injective
model on  $d\cA (G)$ we have described is not a  monoidal model
structure.  The first step will be to extend
Barnes's dualizable model structures from the rank 1 case to the
arbitrary case, using \cite{tnq2}.  
\end{remark}

\section{Cellular equivalences in $\protect D(\cA (G))$}
\label{sec:algcells}

We aim to show that cellularization has no effect on $d\cA (G)$
(equipped with the model structure described in Section
\ref{sec:algebraicmodels}).

\subsection{The two notions of equivalence}
Recall that a map $f:X \lra Y$ in $d\cA (G)$ is a weak equivalence if 
it is a homology isomorphism (i.e., if $f_*: H_*(X)\lra H_*(Y)$ is an
isomorphism in $\cA (G)$). This means that it is an isomorphism when
evaluated at each connected subgroup $K$. 
 
The map $f: X\lra Y$ is a {\em cellular  equivalence} if the $\Hom (A,X)\lra \Hom (A,Y)$ is a homology
isomorphism for all (cofibrant) cells $A$ (i.e., $[A,X]_*\lra
[A,Y]_*$ is an isomorphism for all $A$). We note that for each
$A$ this just involves a single graded vector space. 

\begin{thm}
\label{prop:cellequivisequiv}
The triangulated category $D(\cA (G))$ is generated by the cells $G/K_+$. 
Accordingly, a cellular equivalence of objects of $d\cA(G)$ is a homology
isomorphism. 
\end{thm}

\begin{remark}
Although the idea of the proof seems rather simple, organizing the
implementation requires some delicacy. We tried several approaches,
hoping to minimize the verifications, but in the end all seemed to use
very  similar ingredients: Koszul models for cells, and the associated 
 apparatus of torsion and completion,  a filtration by dimension of isotropy groups 
and the objects $f_K(M)$ (where $f_K$ is right adjoint to evaluation
at $K$). 
\end{remark}

By the use of mapping cones, it suffices to show that 
if an object $X$ is cellularly trivial then $H_*(X)=0$.
This also proves the statement about generation, since the cells are
small, and for any $Y$ we may use the usual process of cellular
approximation to construct a cellular object $\mathrm{cell} (Y)$ and a map
$\mathrm{cell}(Y)\lra Y$ which is a cellular equivalence.

Suppose $X$ is cellularly trivial. We will argue by induction on the
codimension of $K$ that $H_*(X)(K)=0$. Suppose then that $\codim (K)=c$
and that we have already proved that $H_*(X)(H)=0$ if $\codim
(H)<c$. This is certainly true if $c=0$, so the induction
starts. Since there are no infinite decreasing chains of subgroups
this suffices. We return to the inductive step in Subsection
\ref{subsec:indstep} after some preparation.

\subsection{Motivation for the proof }
\label{subsec:motivation}
To guide us, and to recall some  standard notation, we consider the
derived category $D(tors-R)$, where $R=k[x_1, \ldots , x_r]$ is a
polynomial  ring over a field $k$.  The corresponding claim is that if $[k, M]_*=0$
then $M\simeq 0$ (or equivalently that $k$ generates the category). 
One proof is as follows.  

First we recall some standard constructions. The Koszul complex for an element $x$ is defined by 
$\Kos (x)=\fibre (x: R\lra R)$, and the stable Koszul complex is
defined by $\Kos^{\infty}(x)=\fibre (R\lra R[1/x])$. It is easy to see
that 
$$\Kos^{\infty}(x)=\colim_s\Kos (x^s). $$
For a sequence of elements the Koszul and stable Koszul complex are
obtained by tensoring those of the terms together. The stable Koszul
complex $\Kos (y_1, \cdots , y_r)$ only depends on the radical of the 
ideal $(y_1, \ldots , y_r)$ and we write
$$\Gamma (M)=\Kos^{\infty}(x_1, \ldots , x_r)\tensor M, $$
and this is the $k$-cellularization of $M$. 

We may now proceed with the proof. 

Step 1. $k$ is self-dual up to suspension. Indeed, it is equivalent to the
Koszul complex for the generators $x_1, \ldots, x_r$. 

It follows from Step 1 that  if $[k,M]_*=0$ then  $k\tensor M\simeq 0$. 

Step 2. It then follows formally that  $\Gamma R \tensor M\simeq 0$, where $\Gamma
R$ is the $k$-cellularization of $R$.

Step 3. From the cofibre sequence $\Gamma R \lra R \lra \check{C}R$ we
deduce $M\simeq \check{C}R\tensor M$. 

Step 4. Since $M$ is torsion $\check{C}R\tensor M\simeq 0$. Indeed,
$\check{C}R$ has a finite filtration with subquotients $R[1/x_{T}]$
where $x_T$ is a (non-empty) product of variables $x_i$, and 
$$R[1/x_T]\tensor M=M[1/x_T]\simeq 0$$
since $M$ is torsion. 

\subsection{Cells as Koszul complexes }

We explain how to view the cells $G/K_+$ as Koszul complexes. 

First, if $H$ is a codimension 1 subgroup we choose a one dimensional
representation $\alpha =\alpha (H)$ so that  $H=\ker (\alpha)$.  The
cofibre sequence 
$$G/H_+\simeq S(\alpha )_+ \lra S^0 \lra S^{\alpha}$$
suggests that $G/H_+$  is equivalent to the Koszul
complex of $e(\alpha)$. More precisely, this follows from the form of
the models of $S^0$ and $S^{\alpha}=\Sigma^{\alpha}S^0$  since $e(\alpha)$
is a non-zero divisor on $\cOcF$.

In general, we may choose codimension 1 subgroups $H_1, \ldots , H_c$
so that $K=H_1\cap \cdots \cap H_c$. Exactly as for the
polynomial ring, for a complex representation $V$ with $V^G=0$,  we
have  $e(V) \in \cOcF$ and we take 
$$\Kos (e(V))=\fibre (S^0\lra S^V). $$
We note that all terms are projective, and for a sequence of Euler classes
the Koszul complex is defined by tensoring these together.

\begin{lemma}
The Koszul complexes give a projective model for the cells: 
$$G/K_+\simeq Kos(e(H_1), \ldots , e(H_c)). $$
\end{lemma}

\begin{proof}
Since the Euler classes form a regular sequence in $\cOcF$,  the homology 
$\piA_*(Kos (e(H_1), \ldots , e(H_c))$ is calculated as a quotient of
$\piA_*(S^0)=\widetilde{\cOcF}$ (in the notation of Subsection \ref{subsec:defnAG}). This applies equally well in the
category of spectra, so that the homology of the Koszul complex agrees
with the image $\piA_*(G/K_+)$ of the spectrum $G/K_+$. Since $G/K_+$ 
is intrinsically formal by Corollary \ref{cor.cell.homology}, this completes the proof.  
\end{proof}

We will  use the following duality property, familiar in topology. 

\begin{cor}
\label{cor:DGK}
The algebraic cell $G/K_+$ is self dual: if $\codim (K)=c$ then  
$$DG/K_+\simeq \Sigma^{-c}G/K_+$$
\end{cor}

\begin{proof}
Since 
$$\Hom (S^V, X)=\Sigma^{-V}X=S^{-V}\tensor X, $$
the dual of the Koszul complex is the Koszul complex. However,  
a priori the shift is by a representation rather than an integer. To
see the representation may be
replaced by an integer, we note that if $\alpha$ is a one dimensional
representation with kernel $K$
$$\Sigma^{\alpha}\cOcF= \Sigma^2 e_{\cF K}\cOcF \oplus (1- e_{\cF
  K})\cOcF $$
where $\cF K$ consists of the finite subgroups of $K$, and $e_{\cF K}$
is the corresponding idempotent. It follows that 
$$D\Kos (e(\alpha))\cong \Sigma^{-1}\Kos (e(\alpha)) .$$ 
The general case follows by tensoring $c$ instances of this together. 
\end{proof}

\subsection{Proof of the inductive step }
\label{subsec:indstep}
We may in fact now follow the motivating pattern described in
Subsection \ref{subsec:motivation}. 

Suppose then  that  $H_*(X)(H)=0$ for $H$ of codimensions $<c$  and
that $\codim K=c$. We will show that
the fact $X$ is cellularly trivial means $H_*(X)(K)=0$.

By Corollary \ref{cor:DGK},  $[G/L_+, X]_*=[S^0, \Sigma^{-d}G/L_+\sm
X]_*$, where $L$ is of codimension $d$, so
that the hypothesis that $X$ is cellularly trivial
proves that $[S^0, A\sm X]_*=0$ for any cellular spectrum
$A$. We note that $A=\siftyV{K}$ is cellular. Indeed, it is the
localization which inverts $e(\alpha)$ for those one dimensional
representations $\alpha $ with $\alpha^K=0$ (i.e., $K\not \subseteq
\ker(\alpha)$).  Accordingly, 
$\fibre (S^0\lra \siftyV{K})$ is the cellularization of $S^0$ using 
cells $G/L_+$ where $L$ is in the family of subgroups not containing
$K$. (More explicitly, it is the homotopy colimit of stable Koszul
complexes $\Kos^{\infty} (\alpha_1, \ldots , \alpha_s)$ where $\alpha_i^K=0$)
The point of considering $\siftyV{K}\sm M$ is the isomorphism
$$\siftyV{K}\sm  X \cong f_K(\phi^KX). $$

\begin{lemma}
For any torsion  DG-$\cOcFK$-module $M$, 
$$[S^0, f_K(M)]\cong H_*(M). $$
\end{lemma}

\begin{proof}
Since $f_K$ is right adjoint to evaluation at $K$, and since 
this is compatible with resolutions, 
the Adams spectral sequence for $[T,f_K(M)]^G$ takes the simple form
$$E_2^{s,t}=
\Ext_{\cOcFK}^{*,*}(\phi^KH_*(T), H_*(M))\Rightarrow [T,X]^G_*. $$
In particular, taking $T=S^0$ and $X=f_K(M)$,  we have $\phi^KH_*(S^0)=\cOcFK$ and
$$[S^0, f_K(M)]=\Hom_{\cOcFK}(\cOcFK,
H_*(M))=H_*(M).\qedhere $$
\end{proof}

Finally we see
$$0=[S^0,\siftyV{K}\sm  X]_*=[S^0, f_K(\phi^KX)=H_*\phi^K X$$
as required. 

This completes the inductive step and hence the proof of Theorem
\ref{prop:cellequivisequiv}. \qqed

\end{document}